\newtheorem{thrm}{Theorem}[section]
\newtheorem{lmm}{Lemma}[section]
\newtheorem{crllr}{Corollary}[section]
\newtheorem{prpstn}{Proposition}[section]
\newtheorem{xmpl}{Example}[section]
\theoremstyle{remark}
\newtheorem{rmrk}{Remark}[section]
\newcounter{hyp}
\newenvironment{mycondition}{%
\addtocounter{equation}{-1}
\refstepcounter{hyp}

\begin{equation}}
{\end{equation}}
\newcommand{\tto}{\rightrightarrows}
\newcommand{\sgn}{{\rm sgn}\kern 0.12em}
\newcommand{\argmin}{{\rm argmin}\kern 0.12em}
\newcommand{\cC}{{\mathcal C}}
\newcommand{\cH}{{\mathcal H}}
\newcommand{\cL}{{\mathcal L}}
\newcommand{\cP}{{\mathcal{P}}}
\renewcommand{\phi}{\varphi}
\newcommand{\eps}{\varepsilon}
\newcommand{\BB}{\mathbb{B}}
\newcommand{\R}{\mathbb{R}}
\newcommand{\N}{\mathbb{N}}
\newcommand{\bd}{{\rm bd}\kern 0.12em}
\newcommand{\haus}{{\rm haus}}
\newcommand{\gph}{{\rm gph}\kern 0.12em}
\newcommand{\dom}{{\rm dom}\kern 0.12em}
\newcommand{\ran}{{\rm ran}\kern 0.12em}
\newcommand{\inte}{{\rm int}\kern 0.12em}
\newcommand{\cl}{{\rm cl}\kern 0.12em}
\newcommand{\demi}{\frac{1}{2}}
\newcommand{\ie}{{\it i.e.}\,\,}
\newcommand{\xbar}{\overline{x}}
\renewcommand{\bar}{\overline}
\newcommand{\rinf}{\R\cup\{+\infty\}}
\newcommand{\hu}{\widehat{u}}
\newcommand{\tdown}{\bigtriangledown}
\begin{document}
\title[]{Asymptotic behavior of nonautonomous monotone and subgradient evolution equations}

\author{Hedy Attouch}

\address{Institut de Math\'ematiques et Mod\'elisation de Montpellier, UMR 5149 CNRS, Universit\'e Montpellier 2, place Eug\`ene Bataillon,
34095 Montpellier cedex 5, France.}
\email{hedy.attouch@univ-montp2.fr}

\author{Alexandre Cabot}
\address{Institut de Math\'ematiques de Bourgogne, UMR 5584, CNRS, Univ. Bourgogne Franche-Comt\'e, 21000 Dijon, France. }
\email{alexandre.cabot@u-bourgogne.fr}

\author{Marc-Olivier Czarnecki}
\address{Institut de Math\'ematiques et Mod\'elisation de Montpellier, UMR 5149 CNRS, Universit\'e Montpellier 2, place Eug\`ene Bataillon,
34095 Montpellier cedex 5, France.}
\email{marco@math.univ-montp2.fr}

\date{}

\subjclass[2010]{34G25, 37N40, 46N10, 47H05} %49M30

\keywords{Nonautonomous monotone inclusion, subgradient inclusion, multiscale gradient system, hierarchical minimization, asymptotic behavior, Br\'ezis-Haraux function, Fitzpatrick function}

\begin{abstract}
In a Hilbert setting $H$,  we study the asymptotic behavior of the trajectories of   nonautonomous evolution equations 
$\dot x(t)+A_t(x(t))\ni 0$,
where for each $t\geq 0$, $A_t:H\tto H$ denotes a maximal monotone operator. 
 We provide general conditions  guaranteeing the weak ergodic convergence of each trajectory $x(\cdot)$ to a zero of a limit maximal monotone operator $ A_\infty$,  as the time variable $t$ tends to $+\infty$. 
The crucial point is to use the Br\'ezis-Haraux function, or equivalently the Fitzpatrick function, to express at which rate  the excess of  $\gph A_\infty$ over $\gph A_t$ tends to zero.
This approach gives a sharp and unifying view on this subject.
In the case of  operators $A_t= \partial \phi_t$ which are subdifferentials of closed convex  functions $\phi_t$, we show convergence results for the trajectories.
Then, we  specialize our results to  multiscale evolution equations, and obtain asymptotic properties of hierarchical minimization, and  selection of viscosity solutions.
Illustrations are given in the field of coupled systems, and partial differential equations.
\end{abstract}

\thanks{H. Attouch, A. Cabot, and M.-O. Czarnecki: with the support of  ECOS  grant C13E03.\\
H. Attouch: Effort sponsored by the Air Force Office of Scientific Research, Air Force Material Command, USAF, under grant number FA9550-14-1-0056.}
\maketitle

\section{Introduction and notations}
 Throughout the paper, $H$ is a real Hilbert space which is endowed with the scalar product $\langle \cdot,\cdot\rangle$ and the norm $\|\cdot\|$ defined by $\|x\|= \sqrt{\langle x,x\rangle}$ for any $x\in H$. We study the asymptotic behavior of the  NonAutonomous Monotone Inclusion
\begin{equation}\label{eq.NAMI}\tag{NAMI}
\dot x(t)+A_t(x(t))\ni 0, \qquad t\geq 0,
\end{equation}
where for every $t\geq 0$, $A_t:H\tto H$ denotes a maximal monotone operator. Following Br\'ezis \cite[Definition 3.1]{Bre}, we say that $x:[0,+\infty[\to H$ is a strong global solution  of \eqref{eq.NAMI}
if $x(\cdot)$ is locally absolutely continuous on $[0,+\infty[$, and  if \eqref{eq.NAMI} holds for almost all $t>0$.
We take for granted the existence of strong solutions to \eqref{eq.NAMI}.
The existence of solutions of nonautonomous differential inclusions governed by time dependent maximal monotone operators is a nontrivial topic. 
This issue has been studied extensively in the years 70-80, see  Br\'ezis \cite{Bre}, Attouch and Damlamian \cite{AD}, Kenmochi \cite{Ken}, and references therein.

We prove the ergodic weak convergence of the trajectories of \eqref{eq.NAMI}
under some general condition involving the Br\'ezis-Haraux function associated to the operator~$A_t$. The Br\'ezis-Haraux function $G_M:H\times H\to \rinf$ associated to the maximal monotone operator $M$ was introduced in \cite{BreHar}. It is defined by
$$G_M(x,u)=\sup_{(y,v)\in \gph M} \langle x-y, v-u\rangle,$$
where $\gph M$ denotes the graph of $M$.
The function $G_M$ is nonnegative and takes the zero value on the graph of $M$.
The function $G_M$ is connected with the Fitzpatrick function $F_M$ via the formula $G_M(x,u)=F_M(x,u)-\langle x,u\rangle$, for every $(x,u)\in H\times H$. If there exists a maximal monotone operator $A_\infty: H\tto H$ such that $S_\infty=A_\infty^{-1}(0)\neq \emptyset$, and if
$$\forall (z,p)\in \gph A_\infty, \qquad \int_0^{+\infty}G_{A_t}(z,p)\, dt<+\infty,$$
we show that every strong global solution of \eqref{eq.NAMI}  converges weakly in average toward an element of $S_\infty$,  as $t\to +\infty$. As a by-product, we recover the Baillon-Br\'ezis theorem \cite{BaiBre} in the case of an autonomous evolution inclusion. The above integral condition is well suited for structured problems of the form $A_t=A+\beta(t)B$, with $A$, $B:H\tto H$ maximal monotone operators, and $\beta(t)$ a time-dependent parameter. In this framework, we recover as a particular case a condition due to Bot-Csetnek \cite[Section~2]{BotCse} that guarantees the weak ergodic convergence of a forward-backward penalty scheme. The Bot-Csetnek condition formulated by means of the Fitzpatrick function is itself a generalization of a former condition given by Attouch-Czarnecki \cite{AttCza}, see also \cite{AttCzaPey1,AttCzaPey2}.

The second important part of the paper concerns the study of the asymptotic behavior of the  NonAutonomous subGradient Inclusion
\begin{equation}\tag{NAGI}
\dot x(t)+\partial \phi_t(x(t))\ni 0, \qquad t\geq 0,
\end{equation}
where for every $t\geq 0$, $\phi_t:H\to \rinf$ is a closed convex function. Such an evolution inclusion falls into the framework of \eqref{eq.NAMI} since the operator $\partial \phi_t:~H\tto H$ is maximal monotone. In the context of subdifferential operators, we can obtain convergence of the trajectories instead of ergodic convergence. If we assume that the filtered family $(\phi_t)_{t\geq 0}$ is nonincreasing with respect to $t$, then we easily show that the potential energy function $t\mapsto\phi_t(x(t))$ decreases toward its infimum as $t\to +\infty$. By using the Opial lemma along with a suitable summability condition, we deduce the weak convergence of the trajectories, see Theorem \ref{th.nonincreasing}. When no monotonicity assumption is made on the family $(\phi_t)_{t\geq 0}$, it may be tricky to prove that $\lim_{t\to +\infty}\phi_t(x(t))$ exists. The reader is referred to  \cite{FurMiyKen}, where  ad hoc conditions are given in order to control the variations
 in time of the family $(\phi_t)_{t\geq 0}$. Weak convergence of the trajectories is then obtained via energetical arguments. In the present paper, we propose an alternative approach, based on the study of the distance from the trajectory to the optimal set\footnote{The optimal set $S_\infty$ is the set of minimizers (supposed to be nonempty) of the function $\phi_\infty$, that is obtained as the limit of $\phi_t$ as $t\to +\infty$ (in a sense to be precised).} $S_\infty$. The argument follows from an extension of a result due to Baillon-Cominetti \cite{BaiCom} in a finite dimensional framework. Under a suitable summability assumption, we derive the weak convergence of every  trajectory of \eqref{eq.NAGI} toward a point of the optimal set $S_\infty$, see Theorem \ref{th.NAGI}. 

A particular attention is devoted to the case $\phi_t=\Phi+\beta(t) \Psi$, where $\Phi$, $\Psi:H\to \rinf$
are closed convex functions, and $\beta(t)$ is a positive time-dependent parameter. This corresponds to the situation of coupled (sub)gradients with multiscale aspects. If $\beta(t)\to +\infty$, and if the set $C=\argmin \Psi$ is nonempty, 
the orbits of the Multiscale Asymptotic Gradient dynamics, studied in \cite{AttCza},
\begin{equation}\label{MAG}\tag{MAG}
\dot x(t)+\partial \Phi(x(t))+\beta(t)\partial \Psi(x(t))\ni 0
\end{equation}
tend to minimize the function $\Phi$ over the set $\argmin \Psi$, thus leading to a hierarchical minimization process.
The problem of convergence as $t\to +\infty$ depends on the behavior as $\eps\to 0$ of the quantity $\omega(\eps)$ defined by
$$\omega(\eps)=\inf_H\left(\left(\Psi-\inf_H\Psi\right)+\eps\left(\Phi-\inf_C\Phi\right)\right). $$
The key condition that implies weak convergence of the trajectories of
\eqref{MAG} is the following
$$\int_0^{+\infty}\beta(t)\left|\omega (1/\beta(t))\right|\, dt<+\infty.$$
The map $\omega (\cdot)$ was introduced by Cabot \cite{Cab} in the framework of a diagonal proximal point algorithm involving multiscale aspects. The behavior of the map $\omega (\cdot)$ was used later by 
Alvarez-Cabot \cite{AlvCab} to find asymptotic selection properties of viscosity equilibria for semilinear evolution equations. By resorting to the duality theory, we show that the quantity $|\omega(\eps)|$ is majorized by an expression depending only on the function $\Psi$. More precisely, there exists $p\in H$ in the range of the normal cone operator $N_C:H\tto H$, such that\footnote{The functions $\Psi^*$ and $\sigma_C$ denote respectively the Fenchel conjugate of $\Psi$ and the support function of $C$. }
$$|\omega(\eps)|\leq \Psi^*(\eps p)+\min_H \Psi-\sigma_C(\eps p),$$
for every $\eps\geq 0$.
Assuming that $\min_H \Psi=0$, we deduce that the above summability condition is satisfied as soon as
$$ \displaystyle \int_{0}^{+\infty} \beta (t) \left[\Psi^* \left(\frac{p}{ \beta (t)}\right) - \sigma_C \left(\frac{p}{ \beta (t)}\right)\right]dt < + \infty,
$$
%$$\int_0^{+\infty}\beta(t)\left[\Psi^*(p/\beta(t))-\sigma_{C}(p/\beta(t))\right]\, dt <+\infty,$$
for every vector $p$ in the range of $N_C$.
This is precisely the condition due to Attouch-Czarnecki \cite{AttCza} in order to ensure weak convergence of the trajectories of \eqref{MAG}. When the function $\Psi$ satisfies the quadratic conditioning property $\Psi\geq a\,d^2(\cdot,C)$ for some $a>0$, the above assumption is fulfilled if $\int_0^{+\infty}(1/\beta(t))\, dt <+\infty.$

Each of the above mentioned convergence results relies on a summability condition with respect to some suitable quantity.
The summability condition expresses that the integrand tends to zero sufficiently fast. Therefore the conditions stated above quantify the fact that the operators $A_t$ (resp. functions $\phi_t$) tend sufficiently \textit{fast} toward
their limit $A_\infty$ (resp. $\phi_\infty$).

 The problem of trajectory convergence toward a particular viscosity solution naturally arises when the operators $A_t$ (resp. functions $\phi_t$) \textit{slowly} tend toward their limit. We give an answer to this important issue in  two cases:

i) A first answer is given for a family $(\phi_t)_{t\geq 0}$ of closed convex functions by using a technique of central path. 
For every $t\geq 0$, we assume that the function $\phi_t$ has a strong minimum $\xi(t)\in H$,
i.e., for all $x\in H$ 
$$\phi_t (x)\geq \phi_t(\xi(t))+\alpha(t)\, \|x-\xi(t)\|^2,
\quad \mbox{for some }\alpha(t)>0.$$
Under the slow condition $\int_0^{+\infty}\alpha(t)\, dt=+\infty$, we show that any solution $x(\cdot)$ of \eqref{eq.NAGI} satisfies $\lim_{t\to +\infty}\|x(t)-\xi(t)\|=0$, thus it is attracted toward the optimal path $\xi(\cdot)$. It ensues that the trajectory $x(.)$ strongly converges if and only if
the optimal path has a limit as $t\to +\infty$, and in this case the limits are equal. The phenomenon of attraction toward the central path was brought to light in \cite{AttCom}, under a strong convexity property.

\smallskip

ii) A second answer is given in the case of the multiscaled evolution system
\begin{equation}\label{eq.viscosity}\tag{MAG$_\eps$}
\dot x(t)+\partial \Phi(x(t))+\eps(t)\, \partial \Psi(x(t))\ni 0,
\end{equation}
where $\eps:\R_+\to \R_+^*$ is a slowly vanishing viscosity coefficient, i.e., $\lim_{t\to+\infty}\eps(t)=0$ and $\int_0^{+\infty}\eps(t)\, dt=+\infty$.
By reversing the roles of the functions $\Phi$ and $\Psi$, and by using a suitable time rescaling, which allows to pass from $\beta(t) \to + \infty$ to $\eps(t) \to 0$, we show the convergence of the trajectories of \eqref{eq.viscosity} to particular solutions.  As an important special case, if the set $\argmin_C\Psi$ is a singleton $\{\xbar\}$ for some $\xbar\in H$ (where $C= \argmin \Phi$), then for any strong global solution $x(\cdot)$ of \eqref{eq.viscosity}, we have $x(t)\to \xbar$ strongly in $H$ as $t\to +\infty$.
In the case of the Tikhonov approximation $\Psi(x)= \|x\|^2$,  we obtain strong convergence  to the element of minimal norm. Note that we do not assume $\eps(\cdot)$ to be nonincreasing. Under such general assumption,  this asymptotic selection result for the Tikhonov approximation was first obtained  by Cominetti-Peypouquet-Sorin \cite{ComPeySor}.

The paper is organized as follows. In Section \ref{se.NAMI}, we study the asymptotic behavior of the strong global solutions of \eqref{eq.NAMI}. The main result gives the ergodic weak convergence of the trajectories under some general condition involving the Br\'ezis-Haraux function. Section \ref{se.NAGI} is devoted to the case  $A_t=\partial \phi_t$ for a family $(\phi_t)_{t\geq 0}$ of closed convex functions. In this framework, we show weak convergence of the trajectories, thus making more precise the results of Section \ref{se.NAMI}. A special attention is dedicated to the case of structured problems of the form $\phi_t=\Phi+\beta(t)\Psi$, where $\Phi$, $\Psi: H\to \rinf$ are closed convex functions and $\beta(t)$ is a parameter tending to infinity as $t\to +\infty$. For these problems, a key ingredient consists in the study of the infimum value associated to the viscosity minimization problem $\inf_H(\Psi+\eps\Phi)$. Section \ref{se.omega} is devoted to this question, wi!
 th new results obtained by using duality arguments.
 Symetrically, we consider the case $\phi_t=\Phi+\eps(t)\Psi$, where $\eps:\R_+\to \R_+^*$ is a slowly vanishing viscosity coefficient.
We complete this study by considering two other classes of nonautonomous subgradient inclusions, corresponding respectively to the quasi-autonomous case, and the sweeping process. Illustrations of our results in the case of coupled gradient systems with multiscale aspects are given in Section~\ref{se.examples}.

\bigskip

\noindent{\bf Notations.} For a function $f:H\to \rinf$, the set $\dom f=\{x\in H: \, f(x)<+\infty\}$ is called the domain of $f$. We call $f$ a proper function if $\dom f$ is a nonempty set.
Let $f: H \to \R\cup\{+\infty\}$ be a proper convex function. The subdifferential of $f$ at $x\in \dom f$
is defined by
$$\partial f(x)=\{p\in H:\, f(y)\geq f(x)+\langle p,y-x\rangle\quad\forall y\in H\}.$$
If the function $f$ is closed and convex, the multivalued operator $\partial f: H\tto H$ is maximal monotone.
For a nonempty convex set $C\subset H$, the normal cone to $C$ at $x\in C$ is given by
$$N_{C}(x)=\{p\in H:\langle p,y-x\rangle\<0\quad\forall y\in C\}.$$
It coincides with the set $\partial \delta_C(x)$, where $\delta_C$ is the indicator function of $C$, taking the value $0$ on $C$, and $+\infty$ elsewhere.
 The Fenchel conjugate of a function $f: H \to \R\cup\{+\infty\}$ is defined by 
$\, f^*(p)=\sup_{x\in H}\{\langle p,x\rangle- f(x)\}$ for every $p\in H$.
The support function of the set $C\subset H$ is given by \,$\sigma_{C}(p)=\delta_C^*(p)=\sup_{x\in C}\langle p,x\rangle$ for every $p\in H$.
Given two functions $f$, $g:H\to \rinf$, we define the inf-convolution of $f$ and $g$ as follows: for every $x\in H$, 
$$(f\tdown g)(x)=\inf_{y\in H}\left\lbrace f(y)+g(x-y)\right\rbrace .$$
Recall that the equality $(f\tdown g)^*=f^*+g^*$ is always true, while the equality $(f+g)^*=f^*\tdown g^*$ holds true if $f$, $g$ are closed convex, and if there exists $x_0\in \dom f$ such that $g$ is continuous at $x_0$. This last condition is known as the Moreau-Rockafellar condition.
For classical facts on convex analysis, see for example \cite{AttButMic,Aze,BC,EkeTem,Roc,RocWet}.

\section{Nonautonomous monotone inclusion}\label{se.NAMI}
In our approach, the Br\'ezis-Haraux and the Fitzpatrick functions will play a crucial role in order to capture the asymptotic behaviour of the filtered sequence of maximal monotone operators 
$(A_t)_{t\to +\infty}$.

\subsection{Graph convergence and convergence of  the Br\'ezis-Haraux functions}
%\tcr{An operator (or multifunction, or correspondence) from $H$ to $H$ is classically defined as a subset of $H\times H$, and is identified to its graph. However, it is convenient to use the separate writings $M:H\tto H$ and $$\gph M=\{(x,u)\in H\times H: \, u\in Mx\}.$$}
A set-valued mapping $M$ from $H$ to $H$ assigns to each $x\in H$ a set $M(x)\subset H$, hence it is a mapping from $H$ to $2^H$. 
Every set-valued mappping $M:H\to 2^H$ can be identified with its graph defined by $$\gph M=\{(x,u)\in H\times H: \, u\in Mx\}.$$
To emphasize this, we speak of $M$ as a multivalued operator (or multifunction, or correspondence) and we write $M:H\tto H$.
The domain and range of $M:H\tto H$ are taken to be the sets
\begin{center}
$\dom M=\{x\in H: \ \exists u\in H \mbox{ with } u\in Mx\},$\\
\vspace{2mm}
$\ran (M)=\{u\in H: \ \exists x\in H \mbox{ with } u\in Mx\}.$
\end{center}
An operator $M:H\tto H$ is said to be monotone if for any $(x,u)$, $(y,v)\in \gph M$, one has
$\langle y-x, v-u\rangle \geq 0$. It is maximal monotone if there exists no monotone operator whose graph strictly contains $\gph M$. 
For classical facts on maximal monotone operators in Hilbert spaces, see for example \cite{BC,RocWet}.
Given a maximal monotone operator $M$, the Br\'ezis-Haraux function 
$G_M:H\times H\to \rinf$, introduced in \cite{BreHar}, is defined by
$$G_M(x,u)=\sup_{(y,v)\in \gph M} \langle x-y, v-u\rangle.$$
Let us show that $G_M$ is an exterior penalty function with respect to the graph of $M$.
By Minty's theorem, we have the following characterization of  
$(x,u)\in \gph M$
\begin{align*}
u \in Mx & \Leftrightarrow x +u \in x +Mx  \\
         & \Leftrightarrow x = (I+M)^{-1}(x+u) \\
& \Leftrightarrow x - (I+M)^{-1}(x+u) =0.
\end{align*}
Thus, the function 
$$P_M (x,u) := \|x - (I+M)^{-1}(x+u) \|^2$$
is a  penalty function with respect to the graph of $M$. It is nonnegative, Lipschitz continuous on bounded sets, and $P_M (x,u) =0 \Leftrightarrow  (x,u)\in \gph M$.
But $P_M$ is difficult to handle practically because, in general, the computation of  the resolvent is a difficult task.
Let us show that the Br\'ezis-Haraux function solves some of these difficulties.
Given arbitrary $(x,u)\in H \times H$, by Minty's theorem, there exists a unique $y\in H$ such that
$$
y + My \ni x+u,
$$
which is $y= (I+M)^{-1}(x+u)$.
Set $v= x+u -y$. We have $v \in My$, and $v-u= x-y$. Thus
\begin{align}
G_M(x,u)=&\sup_{(\xi,\eta)\in\gph M} \langle x-\xi,\eta-u\rangle\nonumber\\
         & \geq \langle x-y,v-u\rangle \nonumber\\
& = \|x-y\|^2  \nonumber\\
& = \|x-(I+M)^{-1}(x+u)\|^2 = P_M (x,u)\label{basic-ineq4}.
\end{align}
On the other, by monotonicity of $M$, we immediately have that $G_M$ is less or equal than zero on the graph of $M$.
Thus $G_M$ is an exterior penalty function with respect the graph of $M$, see also \cite[Corollary 3.9]{Fitz}. A major advantage of $G_M$ is that it is more flexible than $P_M$ for the practical computation, as we will show later. Another interesting feature of $G_M$ is its close relationship with the convex analysis.\\
 The Fitzpatrick function $F_M:H\times H\to \rinf$  is defined by
$$F_M(x,u)=\sup_{(y,v)\in \gph M} \lbrace\langle x, v\rangle+\langle y,u\rangle-\langle y,v\rangle\rbrace.$$
The function $F_M$ was introduced by Fitzpatrick in \cite{Fitz}. 
As a supremum of continuous affine functions,  $F_M$ is convex and lower semicontinuous with respect to the couple $(x,u)$. This property makes it an effective tool to address the problems governed by maximal monotone operators, using methods of convex analysis.
It is the subject of active research, see
for example \cite{BauLarSen,BurSva,MarSva,MarThe,PenZal,SimZal1,SimZal2}. %The function $F_M$ is convex and lower semicontinuous as a supremum of continuous affine functions with respect to the couple $(x,u)$.
The function $G_M$ is related to $F_M$ by $$G_M(x,u)=F_M(x,u)-\langle x,u\rangle.$$ 

%It is easy
%to check that the function $G_M$ is nonnegative and that the following equivalence holds
%$$G_M(x,u)=0\quad \Longleftrightarrow \quad (x,u)\in \gph M,$$
%see for example \cite[Corollary 3.9]{Fitz}. 
%The Brezis-Haraux function will be a key ingredient  for studying the long-time behavior of a dynamical system associated to a family of maximal monotone operators. To prove the ergodic weak convergence of the trajectories, we will need an ergodic variant of the Opial lemma, namely the Opial-Passty lemma. 

The convergence of nets of maximal monotone operators can be formulated in terms of the Br\'ezis-Haraux function.

\begin{prpstn} \label{pr.resolvent convergence}
 Let $\{A_t:H\tto H, \, t\geq 0\}$ be a family of maximal monotone operators. Assume that there exists a maximal monotone operator $A_\infty: H\tto H$ such that
$$
\forall (z,p)\in \gph A_\infty, \qquad \lim_{t \to+\infty}G_{A_t}(z,p)=0.
$$
Then, $(A_t )$ converges in the resolvent sense to $A_\infty$. Equivalently, $(A_t)$ graph converges to $A_\infty$.
\end{prpstn}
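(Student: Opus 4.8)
The plan is to reduce everything to the pointwise strong convergence of resolvents, exploiting the key inequality \eqref{basic-ineq4}, which bounds the resolvent penalty $P_M$ from above by the Br\'ezis--Haraux function $G_M$. Recall that $(A_t)$ converging to $A_\infty$ \emph{in the resolvent sense} means $(I+A_t)^{-1}(w)\to (I+A_\infty)^{-1}(w)$ strongly in $H$ for every $w\in H$, and that the equivalence between resolvent convergence and graph (Painlev\'e--Kuratowski) convergence of maximal monotone operators on a Hilbert space is classical (see Attouch, or Br\'ezis); so once resolvent convergence is established, graph convergence follows automatically.

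First I would fix $w\in H$. Since $A_\infty$ is maximal monotone, Minty's theorem guarantees that $I+A_\infty$ is onto, so there is a unique $z\in H$ with $z=(I+A_\infty)^{-1}(w)$; set $p:=w-z$, so that $(z,p)\in\gph A_\infty$ and $z+p=w$. Applying inequality \eqref{basic-ineq4} to the operator $M=A_t$ at the point $(z,p)$ then gives
\[
G_{A_t}(z,p)\ \geq\ P_{A_t}(z,p)\ =\ \bigl\|z-(I+A_t)^{-1}(z+p)\bigr\|^2\ =\ \bigl\|(I+A_\infty)^{-1}(w)-(I+A_t)^{-1}(w)\bigr\|^2 .
\]
By hypothesis the left-hand side tends to $0$ as $t\to+\infty$, whence $(I+A_t)^{-1}(w)\to (I+A_\infty)^{-1}(w)$ strongly. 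As $w$ was arbitrary, $(A_t)$ converges to $A_\infty$ in the resolvent sense.

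Finally I would invoke the classical equivalence between pointwise resolvent convergence and graph convergence of maximal monotone operators --- together with the standard fact that strong pointwise convergence of the resolvents $(I+A_t)^{-1}$ forces that of $(I+\lambda A_t)^{-1}$ for every $\lambda>0$ --- to conclude that $(A_t)$ graph converges to $A_\infty$. I do not anticipate any genuine obstacle: the substance of the statement is precisely inequality \eqref{basic-ineq4}, and the only care needed is to recall the correct notions of resolvent/graph convergence and the (well-known) equivalence theorem. In particular, the hypothesis $S_\infty\neq\emptyset$ from the introduction plays no role here; all that is used is that $\gph A_\infty$ is nonempty.
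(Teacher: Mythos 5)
Your proof is correct and follows essentially the same route as the paper: fix $w\in H$, use Minty's theorem to produce $(z,p)\in\gph A_\infty$ with $z+p=w$, apply the inequality $G_{A_t}\geq P_{A_t}$ from \eqref{basic-ineq4} to get $\|(I+A_\infty)^{-1}w-(I+A_t)^{-1}w\|^2\leq G_{A_t}(z,p)\to 0$, and then invoke the classical equivalence between pointwise resolvent convergence and graph convergence (the paper cites \cite[Proposition 3.60]{Att}). Your closing observations (on $\lambda$-resolvents and on the irrelevance of $S_\infty\neq\emptyset$) are accurate but not needed.
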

\begin{proof}
Take arbitrary $y \in H$. By Minty's theorem there exists a unique 
$z \in H$ such that $z + A_\infty z \ni y$.
Set $p= y-z$, we have $p \in A_\infty z$, and $z = (I + A_\infty)^{-1}y$.
By (\ref{basic-ineq4})
\begin{align}
G_{A_t}(z,p) &\geq  \|z-(I+A_t)^{-1}(z+p)\|^2 \nonumber\\
&=\| (I + A_\infty)^{-1}y -(I+A_t)^{-1}y \|^2\label{resol-conv2}.
\end{align}
By assumption, $\lim_{t \to+\infty}G_{A_t}(z,p)=0$, which, by (\ref{resol-conv2}), implies the convergence of the resolvents.
Recall that, for sequence of maximal monotone operators, the convergence of the resolvents is equivalent to the graph convergence, \cite[Proposition 3.60]{Att}. 
\end{proof}

\begin{rmrk}
The main ingredient in the previous result is the inequality $G_M \geq P_M$, that already appears in a paper by Penot \& Zalinescu, see 
\cite[Lemma 2.3]{PenZal}. By using the same inequality, it is shown in \cite[Proposition 3.1]{PenZal}
that if $G_{A_t}$ converges to $G_{A_\infty}$ in the bounded-Hausdorff sense, then 
$A_t \to A_\infty$ for the bounded-Hausdorff convergence.
\end{rmrk}

The following example shows that the  convergence of the Br\'ezis-Haraux functions (equivalently, of  the Fitzpatrick functions), is a stronger notion of convergence than the graph convergence.

Take $A$ a general maximal monotone operator, and  $\eps :\R_+\to H$ a map such that $\lim_{t \to +\infty}\eps (t)=0$.
Set $A_t (x)= A(x) + \eps (t) $, with $\dom A_t = \dom A $.
It is immediate to verify that $A_t$ is maximal monotone, and $A_t$ graph-converges to $A$ as $t \to +\infty$. An elementary computation gives, for any $(x,u)\in H \times H$
$$
G_{A_t}(x,u)= G_A (x, u -\eps (t)).
$$
Therefore, to obtain the convergence of graphs without convergence of the Br\'ezis-Haraux functions, it is sufficient to produce a maximal monotone operator $A$ such that
$$
u \mapsto G_A (x, u)
$$
is not continuous at a point $(x,u)\in \gph A$. Since they differ by a continuous bilinear term ($G_A(x,u)=F_A(x,u)-\langle x,u\rangle$), 
it is equivalent to prove the result for the mapping $u \mapsto F_A (x, u)$.
Let us specialize  $A \in \mathcal B (H)$ to be a bounded linear monotone self-adjoint operator. Let 
$q_A : H \to \mathbb R$, $q_A (x) = \frac{1}{2}\langle x,Ax\rangle$ be the quadratic form associated to $A$. 
By a straight computation using the Fenchel conjugate, see
 \cite[Example 20.45]{BC}
$$
F_A(x,u) = 2 (q_A)^*  \big(\frac{1}{2}u + \frac{1}{2} Ax \big).
$$
As a consequence, it is sufficient to consider $A$ such that $(q_A)^*$ is not continuous.
This means that $A$ is not invertible (it is only  positive semi-definite).
For example,  when $A=0$, then $F_A$ is the indicator function of $H \times \{0\}$, an extreme situation where the continuity property of $u \mapsto F_A (x, u)$ fails to be satisfied.  Remark that, if $A \in \mathcal B (H)$  is strongly monotone, then $(q_A)^*$ is continuous, and  the two notions of convergence  coincide (in that particular case).

\subsection{Nonautonomous monotone inclusion: Ergodic convergence}

In this section, we study  the asymptotic behavior of the trajectories of
\begin{equation}%\label{eq.NAMI}
\tag{\ref{eq.NAMI}}
\dot x(t)+A_t(x(t))\ni 0, \qquad t\geq 0.
\end{equation}

The trajectory $x(\cdot)$ is a strong global solution  of \eqref{eq.NAMI} in the sense of Br\'ezis \cite[Definition 3.1]{Bre}, i.e., $x : [0,+\infty[\to H$ is absolutely continuous on any bounded interval $[0,T]$, and  \eqref{eq.NAMI} holds for almost every $t>0$.

Recall that an absolutely continuous function is differentiable almost everywhere, and that one can recover the function from its derivative by the usual integration formula.
Uniqueness of the solution for a given Cauchy data is an immediate consequence of the monotonicity of the operators $A_t$. In the sequel, we take for granted the existence of strong solutions to \eqref{eq.NAMI}.
%The existence of strong solutions of nonautonomous differential inclusions governed by time dependent maximal monotone operators is a nontrivial topic. This issue has been studied extensively in the years 70-80, see  Br\'ezis \cite{Bre}, Attouch and Damlamian \cite{AD}, Kenmochi \cite{Ken}.

\subsubsection{Statement of the  ergodic convergence result}
\begin{thrm}\label{th.ergodic_weak_cv}
Let $\{A_t:H\tto H, \, t\geq 0\}$ be a family of maximal monotone operators. Assume that there exists a maximal monotone operator $A_\infty: H\tto H$ such that $S_\infty=A_\infty^{-1}(0)\neq \emptyset$ and
\begin{mycondition}\label{eq.cond_Brezis-Haraux}
\forall (z,p)\in \gph A_\infty, \qquad \int_0^{+\infty}G_{A_t}(z,p)\, dt<+\infty.
\end{mycondition}
Then  every strong global solution $x(.)$ of \eqref{eq.NAMI} converges weakly in average to some $x_\infty\in S_\infty$, i.e., as  $t\to +\infty$, 
$$
\frac{1}{t} \int_0^t x(s)\, ds\rightharpoonup~x_\infty.
$$
\end{thrm}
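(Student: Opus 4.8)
The plan is to adapt the classical Baillon--Br\'ezis argument to the nonautonomous setting, using \eqref{eq.cond_Brezis-Haraux} as the substitute for autonomy. Fix $(z,p) \in \gph A_\infty$ and, for a strong global solution $x(\cdot)$, set $h(t) = \frac{1}{2}\|x(t) - z\|^2$. Differentiating, $\dot h(t) = \langle \dot x(t), x(t) - z\rangle = \langle -\dot x(t), z - x(t)\rangle$; since $-\dot x(t) \in A_t(x(t))$ for a.e.\ $t$, the definition of $G_{A_t}$ gives, taking $(y,v) = (z,p)$ in the supremum,
$$
\langle x(t) - z, (-\dot x(t)) - p\rangle \le G_{A_t}(z,p),
$$
so $\dot h(t) \le \langle x(t) - z, p\rangle + G_{A_t}(z,p)$. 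The first key step is to integrate this over $[0,t]$ to control the ergodic average: writing $\sigma(t) = \frac{1}{t}\int_0^t x(s)\,ds$, we get $h(t) - h(0) \le \langle t\,\sigma(t) - tz, p\rangle + \int_0^t G_{A_s}(z,p)\,ds$, hence by \eqref{eq.cond_Brezis-Haraux} there is a constant $C$ (depending on $(z,p)$) with $\frac{1}{2}\|x(t)-z\|^2 \le t\langle \sigma(t) - z, p\rangle + C$ for all $t \ge 0$. Specializing to $(z,p) = (z,0) \in \gph A_\infty$ for $z \in S_\infty$ (which is nonempty) yields $\|x(t) - z\|^2 \le 2C_z$, i.e.\ the trajectory is bounded, and therefore so is the ergodic average $\sigma(\cdot)$; moreover $h(t) = \frac12\|x(t)-z\|^2$ converges as $t \to +\infty$, since $\dot h(t) \le G_{A_t}(z,0) \in L^1(0,+\infty)$ forces $t \mapsto h(t) - \int_0^t G_{A_s}(z,0)\,ds$ to be nonincreasing and bounded below.

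The second step is an Opial-type argument applied to $\sigma(t)$. Let $\xbar$ be a weak cluster point of $\sigma(t_n)$ for some $t_n \to +\infty$ (one exists by boundedness). I would first check $\xbar \in S_\infty$: from the displayed inequality with $(z,p)$ arbitrary in $\gph A_\infty$, dividing by $t$ gives $\langle \sigma(t) - z, p\rangle \ge \frac{1}{2t}\|x(t)-z\|^2 - \frac{C}{t} - \frac{1}{t}\int_0^t G_{A_s}(z,p)\,ds \to$ a limit that is $\ge 0$ in the liminf (using boundedness of $\|x(t)-z\|$ and integrability of $G_{A_\cdot}(z,p)$); passing to the limit along $t_n$, $\langle \xbar - z, p\rangle \ge 0$ for all $(z,p) \in \gph A_\infty$, and by maximal monotonicity of $A_\infty$ this gives $0 \in A_\infty \xbar$, i.e.\ $\xbar \in S_\infty$. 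The uniqueness of the weak cluster point then follows from the Opial lemma: for every $z \in S_\infty$, $\lim_{t\to+\infty}\|x(t) - z\|$ exists (by the $L^1$ argument above with $p = 0$), hence $\lim_{t\to+\infty}\|\sigma(t)-z\|$ also exists --- here one uses that $\frac{1}{t}\int_0^t \|x(s)-z\|^2 ds$ and $\|\sigma(t)-z\|^2$ have the same limit via Jensen together with a Ces\`aro/Abel summation estimate, or more directly that $\|\sigma(t) - z\|^2 \le \frac{1}{t}\int_0^t\|x(s)-z\|^2 ds \to \lim h$. Combined with $\xbar \in S_\infty$, Opial's lemma yields $\sigma(t) \rightharpoonup x_\infty$ for a unique $x_\infty \in S_\infty$.

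I expect the main obstacle to be the passage from the convergence of $\|x(t)-z\|$ (or of its Ces\`aro average) to the existence of $\lim_{t\to+\infty}\|\sigma(t)-z\|$ --- i.e.\ verifying the Opial hypotheses \emph{for the ergodic average} $\sigma$ rather than for $x$ itself. The clean way is: since $\lim_{t\to+\infty}\|x(t)-z\|^2 =: \ell_z$ exists for each $z \in S_\infty$, we have $\frac{1}{t}\int_0^t \|x(s)-z\|^2 ds \to \ell_z$; expanding $\|\sigma(t)-z\|^2 = \frac{1}{t^2}\|\int_0^t(x(s)-z)ds\|^2$ and using the identity relating it to $\frac{1}{t}\int_0^t\|x(s)-z\|^2 ds$ minus a variance term, one must show the variance term $\frac{1}{t}\int_0^t\|x(s)-z\|^2 ds - \|\sigma(t)-z\|^2$ converges; this is where a little care with the structure (rather than a routine estimate) is needed, and it may be cleanest to instead verify Opial directly on $\sigma$ using that $\langle \sigma(t)-z, p\rangle$ has a limit (shown above) to pin down $\liminf$ and $\limsup$ of $\|\sigma(t)-z\|^2$. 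Everything else --- boundedness, identification of cluster points, the final Opial conclusion --- is standard once \eqref{eq.cond_Brezis-Haraux} is in hand.
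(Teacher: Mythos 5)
Your overall strategy is the paper's: the energy $h(t)=\demi\|x(t)-z\|^2$, the Br\'ezis--Haraux inequality, integration and division by $t$, identification of weak cluster points of the average $\sigma(t)=\frac1t\int_0^t x(s)\,ds$ via maximal monotonicity, and an Opial-type conclusion. But as written your basic differential inequality has the wrong sign, and this breaks the identification step. Since $(x(t),-\dot x(t))\in\gph A_t$, the admissible instantiation in $G_{A_t}(z,p)=\sup_{(y,v)\in\gph A_t}\langle z-y,v-p\rangle$ is $(y,v)=(x(t),-\dot x(t))$ (not ``$(y,v)=(z,p)$'', which belongs to $\gph A_\infty$, not to $\gph A_t$), and it yields $\dot h(t)+\langle x(t)-z,p\rangle\le G_{A_t}(z,p)$, i.e.\ the inner product enters the bound on $\dot h$ with a \emph{minus} sign. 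With the correct sign, integrating, using $h\ge 0$ and dividing by $t$ gives $\langle\sigma(t)-z,p\rangle\le c/t$, hence along $\sigma(t_n)\rightharpoonup\xbar$ one gets $\langle\xbar-z,0-p\rangle\ge 0$ for every $(z,p)\in\gph A_\infty$, which is exactly the monotonicity test for the pair $(\xbar,0)$ and yields $0\in A_\infty\xbar$ by maximality. Your version produces instead $\langle\xbar-z,p\rangle\ge 0$ for all $(z,p)\in\gph A_\infty$, which is \emph{not} that test and does not imply $\xbar\in S_\infty$ (for $A_\infty=\mathrm{Id}$ on $H=\R$ no point satisfies it). The repair is only a sign, but the step as stated fails. (The $p=0$ part --- boundedness and existence of $\lim_{t\to+\infty}\|x(t)-z\|$ for $z\in S_\infty$ --- is unaffected and correct.)

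The second gap is the one you flagged yourself, and it is genuine: you try to run the plain Opial lemma on the average $\sigma$, which would require the existence of $\lim_{t\to+\infty}\|\sigma(t)-z\|$, and neither of your sketches delivers it (the inequality $\|\sigma(t)-z\|^2\le\frac1t\int_0^t\|x(s)-z\|^2\,ds$ only controls the limsup; the ``variance term'' has no reason to converge a priori; and knowing the asymptotic sign of $\langle\sigma(t)-z,p\rangle$ does not pin down $\|\sigma(t)-z\|$). The paper's remedy is the ergodic variant of Opial's lemma due to Passty (Lemma \ref{lm.Opial-Passty}): it asks only that $\lim_{t\to+\infty}\|x(t)-z\|$ exist for every $z\in S_\infty$ --- which you have, via $p=0$ --- together with the fact that every weak cluster point of $\sigma(t)$ lies in $S_\infty$ --- which you have after the sign fix. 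Equivalently, you can argue directly: if $z_1,z_2\in S_\infty$ are two weak cluster points of $\sigma$, then the existence of $\lim_{t\to+\infty}\|x(t)-z_i\|^2$ for $i=1,2$ implies that $\lim_{t\to+\infty}\langle x(t),z_1-z_2\rangle$ exists, hence its Ces\`aro mean $\langle\sigma(t),z_1-z_2\rangle$ converges, and evaluating the limit along the two subsequences gives $\|z_1-z_2\|^2=0$. With these two repairs your argument coincides with the paper's proof.
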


\begin{rmrk} 
From (\ref{resol-conv2}), we deduce that Condition (\ref{eq.cond_Brezis-Haraux}) implies 
\begin{equation}\label{eq.ess_graph_conv}
\forall y \in H, \quad \int_0^{+\infty}\| (I+A_t)^{-1}y -(I + A_\infty)^{-1}y \|^2 \, dt<+\infty.
\end{equation}
Hence, for all $y \in H$
\begin{equation}\label{eq.ess_graph_conv2}
\liminf {\rm ess}_{t \to + \infty}\| (I+A_t)^{-1}y -(I + A_\infty)^{-1}y \|=0,
\end{equation}
a property which is directly related to the graph convergence of $A_t$ to $A_\infty$,  as  $t\to +\infty$ (recall that the graph convergence of a filtered sequence of maximal monotone operator is equivalent to the pointwise convergence of the resolvents).
The detailed study of this relationship is an interesting subject for further research. 
%Without entering into the details (it goes beyond the scope of the paper),
Let us just say that, when $H$ is separable, a thorough inspection of properties (\ref{eq.ess_graph_conv}) and (\ref{eq.ess_graph_conv2}), combined with the non expansive property of the resolvents, is likely to provide (up to a negligeable set) the graph convergence of $A_t$ to $A_\infty$. \\
Indeed, it is not necessary  to deepen this topological analysis, as for our purpose, the integral form (\ref{eq.cond_Brezis-Haraux}), which is used throughout  this paper, is a more convenient  way to express the convergence of $A_t$ to $A_\infty$.  It carries more information than the topological one: it
expresses that, in the sense of the Br\'ezis-Haraux functions, the excess of $\gph A_\infty$ over $\gph A_t$ tends to $0$ fast enough as $t\to +\infty$.

\end{rmrk}

As a special case of Theorem \ref{th.ergodic_weak_cv},  we recover Baillon-Br\'ezis theorem~\cite{BaiBre}.

\begin{crllr}\label{Cor-BaiBre} \cite{BaiBre} Let $A: H\tto H$ be a maximal monotone operator such that $S=A^{-1}(0)\neq \emptyset$. Let $x(\cdot)$ be a strong global solution of $$\dot x(t)+A(x(t))\ni 0.$$ Then there exists $x_\infty\in A^{-1}(0) $ such that $\frac{1}{t} \int_0^t x(s)\, ds\rightharpoonup~x_\infty$ weakly in $H$, as $t\to +\infty$.
\end{crllr}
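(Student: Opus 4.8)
The plan is to deduce Corollary~\ref{Cor-BaiBre} from Theorem~\ref{th.ergodic_weak_cv} by taking the autonomous family $A_t := A$ for every $t\geq 0$, and $A_\infty := A$. First I would observe that this is manifestly a family of maximal monotone operators and that $S_\infty = A_\infty^{-1}(0) = A^{-1}(0) = S \neq \emptyset$ by hypothesis. The only thing left to check is Condition~(\ref{eq.cond_Brezis-Haraux}), i.e.\ that for every $(z,p)\in\gph A$ one has $\int_0^{+\infty} G_{A_t}(z,p)\,dt < +\infty$. Since $A_t = A$ does not depend on $t$, the integrand equals the constant $G_A(z,p)$; and $G_A$ vanishes on $\gph A$ (it is an exterior penalty function for $\gph A$, as established in the discussion preceding Proposition~\ref{pr.resolvent convergence}: by monotonicity $G_M\leq 0$ on $\gph M$, and $G_M\geq 0$ everywhere since $(x,u)$ itself is admissible in the supremum when $u\in Mx$). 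Hence $G_A(z,p)=0$ for $(z,p)\in\gph A$, the integral is $\int_0^{+\infty}0\,dt = 0 < +\infty$, and Condition~(\ref{eq.cond_Brezis-Haraux}) holds trivially.

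It then remains only to note that a strong global solution of $\dot x(t) + A(x(t)) \ni 0$ is precisely a strong global solution of \eqref{eq.NAMI} for this constant family, so Theorem~\ref{th.ergodic_weak_cv} applies verbatim and yields $x_\infty \in S_\infty = A^{-1}(0)$ with $\frac{1}{t}\int_0^t x(s)\,ds \rightharpoonup x_\infty$ as $t\to+\infty$. This is exactly the asserted conclusion.

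I do not anticipate any real obstacle here: the corollary is a one-line specialization, and the entire content is already packaged in Theorem~\ref{th.ergodic_weak_cv}. The only point worth a sentence of justification is the vanishing of $G_A$ on $\gph A$, which the paper has already spelled out; everything else is bookkeeping. If one wanted an entirely self-contained derivation not quoting the general theorem, the hard part would be reproving the weak ergodic convergence (the Baillon--Br\'ezis argument: monotonicity gives that $t\mapsto \|x(t)-s\|$ is nonincreasing for each $s\in S$, hence bounded and with bounded Ces\`aro averages; an Opial-type lemma applied to the averages, together with a demiclosedness argument showing every weak cluster point of the averages lies in $A^{-1}(0)$, closes it), but since Theorem~\ref{th.ergodic_weak_cv} is available upstream this is unnecessary.
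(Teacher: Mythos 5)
Your proposal is correct and coincides with the paper's own proof: the authors likewise set $A_t=A$, $A_\infty=A$, note that $G_A$ vanishes on $\gph A$ so that Condition~(\ref{eq.cond_Brezis-Haraux}) holds trivially, and invoke Theorem~\ref{th.ergodic_weak_cv}. The extra justification you give for the vanishing of $G_A$ on the graph is accurate and already established in the paper's preliminary discussion.
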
 
\begin{proof} Take $A_t=A$ for every $t\geq 0$, and $A_\infty=A$.
Since $G_A(z,p)=0$ for every $(z,p)\in \gph A$, Condition (\ref{eq.cond_Brezis-Haraux}) is verified, and therefore Theorem~\ref{th.ergodic_weak_cv} applies.
\end{proof}

\subsubsection{Proof of Theorem~\ref{th.ergodic_weak_cv}}

Let us recall the Opial lemma~\cite{Opi}, along with an ergodic version named the Opial-Passty lemma.
\begin{lmm}[Opial] \label{lm.Opial}
Let $H$ be a Hilbert space and $x : [0,+\infty[ \to H$ be
a function such that there exists a nonempty set $S \subset H$ which verifies 
\begin{itemize}
\item[$(i)$] $\forall z \in S$, $\lim_{t \to +\infty} \|x(t)-z\|$ exists.
\item[$(ii)$] $\forall t_n \to +\infty$ with $x(t_n) \rightharpoonup x_\infty$ weakly in $H$,
we have $x_\infty \in S$.
\end{itemize}
Then, $x(t)$ converges  weakly as $t \to +\infty$ to some element $x_\infty$ of $S$.
\end{lmm}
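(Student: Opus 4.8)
The plan is to reproduce Opial's classical argument (Opial, 1967): first show that the trajectory is bounded so that weak cluster points exist, then use hypothesis $(ii)$ to locate every such cluster point inside $S$, and finally use hypothesis $(i)$ together with one short algebraic identity to prove that the weak cluster point is unique. Uniqueness of the weak cluster point, combined with boundedness, will then yield weak convergence of the whole trajectory. No compactness beyond reflexivity of $H$ is needed.

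First I would fix an arbitrary $z\in S$. By $(i)$ the function $t\mapsto\|x(t)-z\|$ converges as $t\to+\infty$, hence is bounded, so $x(\cdot)$ is bounded in $H$. Since $H$ is a Hilbert space, every sequence $x(t_n)$ with $t_n\to+\infty$ has a weakly convergent subsequence, whose limit lies in $S$ by $(ii)$. In particular the set of weak cluster points of $x(t)$ as $t\to+\infty$ is a nonempty subset of $S$.

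Next I would show this set is a singleton. Suppose $x(t_n)\rightharpoonup x_1$ and $x(s_n)\rightharpoonup x_2$ with $t_n,s_n\to+\infty$ and $x_1,x_2\in S$. For every $t$,
\[
\|x(t)-x_1\|^2-\|x(t)-x_2\|^2=-2\langle x(t),x_1-x_2\rangle+\|x_1\|^2-\|x_2\|^2 .
\]
By $(i)$ applied at $x_1$ and at $x_2$, the left-hand side has a limit as $t\to+\infty$, hence $t\mapsto\langle x(t),x_1-x_2\rangle$ converges to some $\ell\in\R$. Evaluating along $t=t_n$ gives $\ell=\langle x_1,x_1-x_2\rangle$, and along $t=s_n$ gives $\ell=\langle x_2,x_1-x_2\rangle$; subtracting yields $\|x_1-x_2\|^2=0$, so $x_1=x_2$. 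Thus all weak cluster points of $x(t)$ coincide with a single $x_\infty\in S$.

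Finally I would upgrade this to weak convergence of the full trajectory: if $x(t)\not\rightharpoonup x_\infty$, there would exist $\phi\in H$, $\eps>0$ and $t_n\to+\infty$ with $|\langle x(t_n)-x_\infty,\phi\rangle|\geq\eps$; extracting a weakly convergent subsequence of the bounded sequence $(x(t_n))$ and invoking uniqueness of the cluster point would force its limit to be $x_\infty$, contradicting $|\langle x(t_n)-x_\infty,\phi\rangle|\geq\eps$. I expect the only mildly delicate point to be precisely this passage from sequential weak compactness to convergence of the (here continuous) trajectory, handled by the contradiction argument just sketched; the remainder is the standard Opial manipulation.
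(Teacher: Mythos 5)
Your proof is correct; the paper does not prove this lemma itself but simply cites Opial's original article, and your argument is precisely the classical one: boundedness of the trajectory from $(i)$, weak sequential cluster points lying in $S$ from $(ii)$, uniqueness of the cluster point via the identity $\|x(t)-x_1\|^2-\|x(t)-x_2\|^2=-2\langle x(t),x_1-x_2\rangle+\|x_1\|^2-\|x_2\|^2$, and a subsequence--contradiction step to upgrade uniqueness to weak convergence of the whole trajectory. The only cosmetic remark is that $(i)$ guarantees boundedness of $x(t)$ only for $t$ large (no continuity of $x(\cdot)$ is assumed), which is all that the asymptotic statement requires.
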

For the following ergodic variant of the Opial lemma, the reader is referred to~\cite{Pas}.
\begin{lmm}[Opial-Passty] \label{lm.Opial-Passty}
Let $H$ be a Hilbert space, let $S$ be a nonempty subset of $H$ and let $x : [0,+\infty[ \to H$ be a function.  For any $t>0$ set $X(t)=\frac{1}{t} \int_0^t x(s)\ ds$, and assume that 
\begin{itemize}
\item[$(i)$] $\forall z \in S$, $\lim_{t \to +\infty} \|x(t)-z\|$ exists.
\item[$(ii)$] $\forall t_n \to +\infty$ with $X(t_n) \rightharpoonup X_\infty$ weakly in $H$,
we have $X_\infty \in S$.
\end{itemize}
Then, $X(t)$ converges weakly  as $t \to +\infty$ to some element $X_\infty$ of $S$.
\end{lmm}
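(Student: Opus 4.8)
The plan is to mimic the proof of Opial's Lemma \ref{lm.Opial}, but applied to the averaged trajectory $X(\cdot)$ and carried out \emph{directly}: hypothesis (i) concerns $x(t)$, and it does \emph{not} yield convergence of $\|X(t)-z\|$ (the square of an average is not the average of the squares), so Lemma \ref{lm.Opial} cannot be invoked verbatim on $X$. I will instead establish the two ingredients needed by hand, namely that $X(\cdot)$ is bounded, so that it possesses weak limit points — all of which lie in $S$ by hypothesis (ii) — and that this weak limit point is unique. For boundedness, fix any $z_0\in S$: by (i) the quantity $\|x(t)-z_0\|$ converges, hence is bounded for large $t$, and local integrability of $x$ handles the initial interval, so $x(\cdot)$ is bounded; consequently $\|X(t)\|\le\frac1t\int_0^t\|x(s)\|\,ds$ is bounded as $t\to+\infty$ as well.

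The crux of the argument is a cancellation that converts the norm information of (i) into the convergence of a linear functional. For $z_1,z_2\in S$, expanding the squared norms gives
\[
\|x(t)-z_1\|^2-\|x(t)-z_2\|^2 = -2\langle x(t),z_1-z_2\rangle + \|z_1\|^2-\|z_2\|^2,
\]
in which the possibly non-convergent term $\|x(t)\|^2$ disappears. By hypothesis (i) both squared norms on the left converge, so $\lim_{t\to+\infty}\langle x(t),z_1-z_2\rangle$ exists for every pair $z_1,z_2\in S$. Since a locally integrable function $g$ with $g(s)\to L$ satisfies $\frac1t\int_0^t g(s)\,ds\to L$, and since the bounded linear functional $\langle\,\cdot\,,z_1-z_2\rangle$ commutes with the integral defining $X(t)$, it follows that $\lim_{t\to+\infty}\langle X(t),z_1-z_2\rangle$ exists too; denote this common limit by $L(z_1,z_2)$.

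To finish, I would show $X(\cdot)$ has a single weak limit point. Let $X_1$ and $X_2$ be weak limits of $X(t_n)$ and $X(s_m)$ along two sequences $t_n,s_m\to+\infty$. By (ii) both belong to $S$, and passing to the limit in the functionals gives $\langle X_1,z_1-z_2\rangle=L(z_1,z_2)=\langle X_2,z_1-z_2\rangle$, hence $\langle X_1-X_2,z_1-z_2\rangle=0$ for all $z_1,z_2\in S$. Choosing the now-admissible pair $z_1=X_1$, $z_2=X_2$ yields $\|X_1-X_2\|^2=0$, so $X_1=X_2$. A bounded family in a Hilbert space all of whose weak limit points coincide converges weakly to that common value, which therefore gives $X(t)\rightharpoonup X_\infty\in S$.

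I expect the main obstacle to be conceptual rather than computational. The difficulty is that hypothesis (i) controls the norms $\|x(t)-z\|$ but gives no direct handle on $\|X(t)-z\|$, so one cannot feed $X$ into Opial's lemma. The decisive move is to abandon norms and work instead with the linear functionals $\langle x(t),z_1-z_2\rangle$, whose convergence survives both the cancellation of $\|x(t)\|^2$ and the Cesàro averaging; once this is in place, boundedness, the Cesàro passage, and the uniqueness-of-weak-limit argument are all routine.
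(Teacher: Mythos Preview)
The paper does not actually prove Lemma~\ref{lm.Opial-Passty}; it simply states the result and refers the reader to Passty~\cite{Pas}. So there is no ``paper's proof'' to compare against.

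That said, your argument is correct and is essentially the standard one. The key idea---expanding $\|x(t)-z_1\|^2-\|x(t)-z_2\|^2$ to cancel the $\|x(t)\|^2$ term, thereby extracting convergence of the linear functional $\langle x(t),z_1-z_2\rangle$, and then passing this through the Ces\`aro average---is exactly the right move and is what distinguishes the ergodic version from the plain Opial lemma. The uniqueness step, choosing $z_1=X_1$ and $z_2=X_2$ once both are known to lie in $S$, is clean.

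One small remark: the final sentence (``a bounded family all of whose weak limit points coincide converges weakly'') deserves a word of justification in the non-separable case, since hypothesis~(ii) is phrased for \emph{sequences} $t_n\to+\infty$. The standard fix is to test weak convergence coordinatewise: if $\langle X(t),v\rangle$ failed to converge to $\langle X_\infty,v\rangle$ for some $v$, one could extract a sequence along which it stays $\varepsilon$-away, pass to a weakly convergent subsequence (bounded sets in $H$ are weakly sequentially compact), and contradict the uniqueness you established. This is routine, but worth a line if you write it up formally.
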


The proof of Theorem~\ref{th.ergodic_weak_cv} relies on the Opial-Passty lemma applied with $S_\infty=A_\infty^{-1}(0)$. Let us first show that for every $z\in S_\infty$, $\lim_{t\to +\infty} \|x(t)-z\|$ exists. Fix $z\in S_\infty$ and set $h(t)=\demi \|x(t)-z\|^2$. Since $-\dot x(t)\in A_t(x(t))$ for a.e. $t\in \R_+$, we have
$$\dot h(t)=\langle x(t)-z, \dot x(t)\rangle\leq G_{A_t}(z,0)\quad \mbox{a.e. on }\R_+.$$
From this inequality and assumption (\ref{eq.cond_Brezis-Haraux}) at the point $(z,0)$, it follows that $\dot h_+\in L^1(0,+\infty)$.
From a classical lemma, this implies that $\lim_{t\to +\infty} h(t)$ exists in $\R$. Let us now show that every sequential weak cluster point   
 of $X(t)=\frac{1}{t}\int_0^t x(s)\, ds$   belongs to $S_\infty$. Let $(z,p)\in \gph A_\infty$, and consider again the function $h$ defined by 
$h(t)=\demi \|x(t)-z\|^2$. Since $-\dot x(t)\in A_t(x(t))$ for a.e. $t\in \R_+$, we obtain
$$\dot h(t)+\langle x(t)-z,p\rangle=\langle x(t)-z, p+\dot x(t)\rangle\leq G_{A_t}(z,p)\quad \mbox{a.e. on }\R_+.$$
By integrating on $[0,t]$, we find
$$h(t)+\left\langle \int_0^t x(s)\, ds-tz,p\right\rangle\leq h(0)+\int_0^t G_{A_s}(z,p)\, ds.$$
After division by $t$, and taking into account of $h(t)\geq 0$, we have
\begin{eqnarray*}
\langle X(t)-z,p\rangle&\leq& \frac{1}{t} h(0)+\frac{1}{t}\int_0^t G_{A_s}(z,p)\, ds\\
&\leq& \frac{c}{t} \quad \mbox{ with } c=h(0)+\int_0^{+\infty} G_{A_s}(z,p)\, ds.
\end{eqnarray*}
Suppose now that $X(t_n)\rightharpoonup X_\infty$ as $n\to +\infty$ for a sequence $t_n\to +\infty$. Taking the limit as $n\to +\infty$ in
$\langle X(t_n)-z,p\rangle \leq c/t_n$, we immediately obtain $\langle X_\infty-z,p\rangle \leq 0$. Hence we have proved that for every $(z,p)\in \gph A_\infty$,
$$\langle X_\infty-z, 0-p\rangle \geq 0.$$
The maximal monotonicity of $A_\infty$ allows us to infer  that $0\in A_\infty(X_\infty)$, that is $X_\infty\in S_\infty$. By Lemma \ref{lm.Opial-Passty},
we conclude to the weak ergodic convergence of the trajectories of \eqref{eq.NAMI}. $ \hfill  \square $
%
%
\begin{comment}
\subsection{Application to the quasi-autonomous case}
\begin{prpstn} Let $A: H\tto H$ be a maximal monotone operator such that $S=A^{-1}(0)\neq \emptyset$. Given a map $f:\R_+\to H$, let $x$ be a strong solution of $$\dot x(t)+A(x(t))\ni f(t).$$ 
Assume that 
\begin{equation}\label{eq.assump_quasi_autonomous}
\forall (z,p)\in \gph A, \quad \int_0^{+\infty} G_A(z,p+f(t))\, dt<+\infty.
\end{equation}
Then there exists $x_\infty\in A^{-1}(0) $ such that $\frac{1}{t} \int_0^t x(s)\, ds\rightharpoonup~x_\infty$ weakly in $H$, as $t\to +\infty$.
\end{prpstn}
\begin{proof} Let us apply Theorem~\ref{th.ergodic_weak_cv} with $A_\infty=A$ and $A_t:H\tto H$ defined by $A_t(x)=A(x)-f(t)$ for every $x\in H$ and $t\geq 0$.
It is immediate to check that for every $(z,p)\in H\times H$,
$$G_{A_t}(z,p)=G_A(z,p+f(t)).$$
In view of assumption (\ref{eq.assump_quasi_autonomous}), condition (\ref{eq.cond_Brezis-Haraux}) is verified and therefore Theorem~\ref{th.ergodic_weak_cv} applies.
\end{proof}
\end{comment}
%
%
\subsection{Coupled operators with multiscale aspects: $A_t=A+\beta(t) B$ with $\beta(t) \to~+\infty$}

In this section, we specify our general ergodic convergence result to the case of a structured operator of the form $A_t=A+\beta(t) B$. The parameter $\beta(t)$ is assumed to tend to $+\infty$, thus leading to a two-scale problem.

%The operators $A$,~$B:~H\tto H$ are supposed to be maximal monotone and the map $\beta:\R_+\to \R$ takes positive values.
 \begin{thrm}\label{co.weak_ergo_cv:A+betaB}
 Let $A$, $B: H\tto H$ be two maximal monotone operators such that the sets $C=B^{-1}(0)$ and $\left(A+N_{C}\right)^{-1}(0)$ are nonempty. Assume that
 the operator $A+N_C$ is maximal monotone. Given a map $\beta:\R_+\to \R_+^*$, assume that the operator $A+\beta (t) B$ is maximal monotone for every $t\geq 0$. Suppose additionally that
 \begin{mycondition}\label{eq.Brezis-Haraux:A+betaB} 
 \forall z\in C, \quad \forall q\in N_C(z), \qquad \int_0^{+\infty} \beta(t)G_B\left(z,\frac{q}{\beta(t)}\right)\, dt<+\infty.
 \end{mycondition}
 Then every strong global solution $x(.)$ of the Multiscale Asymptotic Monotone Inclusion
\begin{equation}\label{MAMI}\tag{MAMI}
\dot x(t)+A(x(t))+\beta(t)\, B(x(t))\ni 0,
\end{equation}
converges weakly in average to some $x_\infty\in (A+N_C)^{-1}(0)$, i.e., as  $t\to +\infty$, 
 $$\frac{1}{t} \int_0^t x(s)\, ds\rightharpoonup~x_\infty.
$$
 \end{thrm}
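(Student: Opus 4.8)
The plan is to derive Theorem \ref{co.weak_ergo_cv:A+betaB} as a direct application of Theorem \ref{th.ergodic_weak_cv} with the limit operator $A_\infty = A + N_C$. Since $A_\infty$ is assumed maximal monotone and $(A+N_C)^{-1}(0) \neq \emptyset$, the hypotheses of Theorem \ref{th.ergodic_weak_cv} will be met once we verify that Condition (\ref{eq.cond_Brezis-Haraux}) holds for the family $A_t = A + \beta(t)B$ against $A_\infty = A + N_C$; that is, we must show
$$\forall (z,p) \in \gph(A+N_C), \qquad \int_0^{+\infty} G_{A+\beta(t)B}(z,p)\, dt < +\infty.$$
So the whole argument reduces to a pointwise estimate of $G_{A+\beta(t)B}(z,p)$ in terms of $G_B$, followed by an invocation of assumption (\ref{eq.Brezis-Haraux:A+betaB}).

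The key step is the following estimate. Fix $(z,p) \in \gph(A+N_C)$ and write $p = a + q$ with $a \in A(z)$ and $q \in N_C(z)$; in particular $z \in C = B^{-1}(0)$, so $0 \in B(z)$. For any $(y,v) \in \gph(A+\beta(t)B)$, write $v = v_A + \beta(t) v_B$ with $v_A \in A(y)$, $v_B \in B(y)$. Then
$$\langle z - y, v - p\rangle = \langle z-y, v_A - a\rangle + \beta(t)\langle z - y, v_B - 0\rangle + \langle z-y, 0 - q\rangle.$$
The first term is $\leq 0$ by monotonicity of $A$ (since $a \in A(z)$, $v_A \in A(y)$). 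The third term, $\langle z-y, -q\rangle = \langle y - z, q\rangle$, is $\leq 0$ whenever $y \in C$, but we cannot assume $y \in C$; however, we can absorb it: writing $\langle z-y,-q\rangle = \langle z-y, (q/\beta(t)) \cdot(-\beta(t))\rangle$, we combine it with the middle term to get
$$\beta(t)\langle z-y, v_B - 0\rangle + \langle z-y, -q\rangle = \beta(t)\Bigl\langle z - y,\; v_B - \tfrac{q}{\beta(t)}\Bigr\rangle \leq \beta(t)\, G_B\Bigl(z, \tfrac{q}{\beta(t)}\Bigr),$$
using that $(y,v_B)\in\gph B$ and the definition of $G_B$. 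Taking the supremum over all $(y,v)\in\gph(A+\beta(t)B)$ on the left yields the clean bound
$$G_{A+\beta(t)B}(z,p) \leq \beta(t)\, G_B\Bigl(z, \tfrac{q}{\beta(t)}\Bigr), \qquad q \in N_C(z).$$

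Once this inequality is in hand, the conclusion follows immediately: integrating over $t \in [0,+\infty[$ and invoking assumption (\ref{eq.Brezis-Haraux:A+betaB}) (applied at the point $z \in C$ and $q \in N_C(z)$ arising from the decomposition $p = a + q$) gives $\int_0^{+\infty} G_{A+\beta(t)B}(z,p)\,dt < +\infty$, which is precisely Condition (\ref{eq.cond_Brezis-Haraux}) for $A_t = A + \beta(t)B$ and $A_\infty = A + N_C$. Theorem \ref{th.ergodic_weak_cv} then delivers the weak ergodic convergence of every strong global solution of \eqref{MAMI} to some $x_\infty \in (A+N_C)^{-1}(0) = S_\infty$, which is the assertion.

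I expect the main subtlety to be purely bookkeeping: ensuring that the splitting $p = a + q$ with $a\in A(z)$, $q \in N_C(z)$ is legitimate — this is exactly the statement that $\gph(A+N_C) = \gph A + \gph N_C$ pointwise at $z$, which is immediate from the definition of the sum of operators — and that the term carrying $q$ is correctly merged with the $\beta(t)B$ term rather than left dangling (the naive bound would require $y \in C$, which is false in general, so the merging with $\beta(t)$ is essential). There is no analytic obstacle here; the entire content is the algebraic estimate above, and everything else is inherited from Theorem \ref{th.ergodic_weak_cv}. One should also note that the hypothesis that $A + \beta(t)B$ is maximal monotone for each $t$ and that $A+N_C$ is maximal monotone are used only to place the problem within the scope of Theorem \ref{th.ergodic_weak_cv}; no qualification condition relating $\dom A$ and $C$ is needed for the estimate itself.
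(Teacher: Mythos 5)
Your proposal is correct, and its overall architecture coincides with the paper's: both reduce the theorem to Theorem \ref{th.ergodic_weak_cv} with $A_t=A+\beta(t)B$, $A_\infty=A+N_C$, use the same decomposition $p=a+q$ with $a\in Az$, $q\in N_C(z)$ (legitimate since $\gph(A+N_C)$ is the graph of the pointwise sum, and $z\in C$ because $\dom N_C=C$), and hinge on the same estimate $G_{A+\beta(t)B}(z,p)\leq \beta(t)\,G_B\bigl(z,q/\beta(t)\bigr)$. Where you differ is in how this estimate is obtained. The paper derives it from Lemma \ref{lm.fitz_sum}: the sum inequality $G_{A+B}(z,p)\leq\inf_{q}\{G_A(z,q)+G_B(z,p-q)\}$ (which is imported from the Fitzpatrick-function inequality of \cite{BauLarSen}), the scaling identity $G_{\lambda B}(z,p)=\lambda G_B(z,p/\lambda)$, and the observation that $G_A(z,p-q)=0$ since $(z,p-q)\in\gph A$. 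You instead prove the bound directly from the definition of the Br\'ezis--Haraux function: splitting $v-p=(v_A-a)+\beta(t)(v_B-0)+(0-q)$ for an arbitrary $(y,v)\in\gph(A+\beta(t)B)$, killing the $A$-part by plain monotonicity, and absorbing the $-q$ term into the $B$-part before taking the supremum --- correctly noting that the naive treatment of $\langle z-y,-q\rangle$ would require $y\in C$. Your route is more elementary and self-contained (no appeal to the Fitzpatrick-sum inequality), at no loss for this theorem; the paper's route factors the computation through a lemma that is reused elsewhere (notably in Corollary \ref{co.weak_ergo_cv:A+epsB}, where item $(iii)$ of the lemma is also needed). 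Your closing remarks about where maximal monotonicity of $A+\beta(t)B$ and of $A+N_C$ enter, and the absence of any qualification condition linking $\dom A$ and $C$, are accurate.
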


 \begin{rmrk} A particularly (new) interesting situation covered by the above theorem  is the  case $\beta(t) \to +\infty$. Indeed, a quick formal inspection of the formula (\ref{eq.Brezis-Haraux:A+betaB}) shows that, if $\beta(t)$ tends to a finite value, then $B= N_C$, a  situation where the classical ergodic convergence  theorem of Baillon-Br\'ezis can be applied.
\end{rmrk}

\begin{rmrk} Denoting by $F_B$ the Fitzpatrick function associated to the operator~$B$, we have for every $q\in N_C(z)$,
 \begin{eqnarray*}
 G_B\left(z,\frac{q}{\beta(t)}\right)&=&F_B\left(z,\frac{q}{\beta(t)}\right)-\left\langle z,\frac{q}{\beta(t)}\right\rangle\\
 &=&F_B\left(z,\frac{q}{\beta(t)}\right)-\sigma_C \left(\frac{q}{\beta(t)}\right).
 \end{eqnarray*}
 The last equality is an immediate consequence of the Fenchel extremality relation $\delta_C(z)+\sigma_C(q)=\langle z,q\rangle$. It ensues
 that condition (\ref{eq.Brezis-Haraux:A+betaB}) can be equivalently rewritten as
 \begin{mycondition}
%\tag{\ref{eq.Brezis-Haraux:A+betaB}'}
\label{eq.Fitzpatrick:A+betaB} 
 \forall z\in C, \, \forall q\in N_C(z), \quad \int_0^{+\infty} \beta(t)\left[ F_B\left(z,\frac{q}{\beta(t)}\right)-\sigma_C \left(\frac{q}{\beta(t)}\right) \right]\, dt<+\infty.
 \end{mycondition}
 This last condition was recently introduced in the discrete setting by Bot \& Csetnek \cite{BotCse}  as a generalization of Condition (\ref{eq.Attouch-Czarnecki}) below and its discrete counterpart. \end{rmrk}

As a consequence of Theorem~\ref{co.weak_ergo_cv:A+betaB}, we recover the ergodic convergence result of Attouch \& Czarnecki \cite{AttCza}.

\begin{crllr} \cite[Theorem 2.1, (i)]{AttCza}
Let $A: H \tto  H$  be a  maximal monotone operator, let $\Psi:  H \rightarrow \R_+ \cup \{+ \infty\}$ be a
closed convex proper function, such that $C$ = $\argmin\Psi = \Psi ^{-1}(0) \neq \emptyset$, let $\beta : \R_+ \rightarrow \R_+$ be  a  measurable function. Assume that $A + N_C$ is a maximal monotone operator and $S:= (A + N_C)^{-1}(0)$ is  non empty, and
\begin{mycondition}
\label{eq.Attouch-Czarnecki}
 \forall p\in \ran(N_C), \quad \displaystyle \int_{0}^{+\infty} \beta (t) \left[\Psi^* \left(\frac{p}{ \beta (t)}\right) - \sigma_C \left(\frac{p}{ \beta (t)}\right)\right]dt < + \infty.
\end{mycondition}
 Then, for every strong global solution trajectory $x(.)$ of the differential
 inclusion
\begin{equation}\tag{\ref{MAG}}
\dot x(t)+A(x(t))+\beta(t)\, \partial\Psi(x(t))\ni~0
\end{equation}
there exists $ x_{\infty}\in S  $  such that 
$$ \  w-\lim_{t\rightarrow
  +\infty}\mbox{  } \frac{1}{t} \int_0^t x(s) ds = x_\infty.$$
\end{crllr}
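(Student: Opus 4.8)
The plan is to derive this corollary directly from Theorem~\ref{co.weak_ergo_cv:A+betaB} by taking $B = \partial \Psi$. First I would check that the hypotheses of the theorem are met: the operator $B=\partial\Psi$ is maximal monotone since $\Psi$ is closed convex proper, and $C = B^{-1}(0) = \{x : 0\in\partial\Psi(x)\} = \argmin\Psi = \Psi^{-1}(0)$ by the assumption $\min_H\Psi = 0$. The hypotheses that $A+N_C$ is maximal monotone and that $(A+N_C)^{-1}(0) = S$ is nonempty are assumed directly. The maximal monotonicity of $A + \beta(t)\partial\Psi$ for each $t$ — needed to apply the theorem — is the standard fact that a sum involving a subdifferential is maximal monotone under a domain-qualification condition; I would either invoke it as part of the ambient hypotheses (as \eqref{MAG} presupposes that its solutions make sense) or cite the relevant result on sums of maximal monotone operators.

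The substantive step is to show that Condition~\eqref{eq.Attouch-Czarnecki} implies Condition~\eqref{eq.Brezis-Haraux:A+betaB} for $B = \partial\Psi$. Using the remark already proved in the excerpt, for $z\in C$ and $q\in N_C(z)$ one has
\begin{equation*}
G_{\partial\Psi}\left(z,\frac{q}{\beta(t)}\right) = F_{\partial\Psi}\left(z,\frac{q}{\beta(t)}\right) - \sigma_C\left(\frac{q}{\beta(t)}\right),
\end{equation*}
so it suffices to bound $F_{\partial\Psi}(z, p)$ from above by $\Psi^*(p) + \Psi(z)$ for every $z\in H$ and $p\in H$. This is the classical Fitzpatrick inequality for subdifferentials: by definition $F_{\partial\Psi}(z,p) = \sup_{(y,v)\in\gph\partial\Psi}\{\langle z,v\rangle + \langle y,p\rangle - \langle y,v\rangle\}$, and for $(y,v)\in\gph\partial\Psi$ the Fenchel–Young equality gives $\langle y,v\rangle = \Psi(y) + \Psi^*(v)$, whence $\langle z,v\rangle + \langle y,p\rangle - \langle y,v\rangle = \langle z,v\rangle - \Psi^*(v) + \langle y,p\rangle - \Psi(y) \leq \Psi^*(z)^{**}$-type bound; more precisely it is $\leq (\langle z,v\rangle - \Psi^*(v)) + \Psi^*(p)$ and using $\langle z,v\rangle - \Psi^*(v)\leq \Psi^{**}(z) = \Psi(z)$ one gets $F_{\partial\Psi}(z,p)\leq \Psi(z) + \Psi^*(p)$. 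Since $z\in C$ gives $\Psi(z) = 0$, we obtain
\begin{equation*}
G_{\partial\Psi}\left(z,\frac{q}{\beta(t)}\right) \leq \Psi^*\left(\frac{q}{\beta(t)}\right) - \sigma_C\left(\frac{q}{\beta(t)}\right),
\end{equation*}
and multiplying by $\beta(t)\geq 0$ and integrating, Condition~\eqref{eq.Attouch-Czarnecki} applied with $p = q$ (noting $q\in N_C(z)\subset\ran N_C$) forces the finiteness of $\int_0^{+\infty}\beta(t)G_{\partial\Psi}(z,q/\beta(t))\,dt$.

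Having verified \eqref{eq.Brezis-Haraux:A+betaB}, I would apply Theorem~\ref{co.weak_ergo_cv:A+betaB} to conclude that every strong global solution of \eqref{MAG} satisfies $\frac1t\int_0^t x(s)\,ds \rightharpoonup x_\infty$ for some $x_\infty\in (A+N_C)^{-1}(0) = S$, which is exactly the claim. I expect the only mild obstacle to be bookkeeping around the Fitzpatrick-inequality bound $F_{\partial\Psi}(z,p)\leq \Psi(z)+\Psi^*(p)$ and, secondarily, the measurability/maximal-monotonicity technicalities of the time-dependent sum $A+\beta(t)\partial\Psi$; but the former is a one-line Fenchel–Young computation and the latter is standard and can be cited, so the proof is essentially a translation of the structured-operator theorem into the subdifferential language.
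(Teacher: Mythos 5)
Your proposal is correct and follows essentially the same route as the paper: apply Theorem~\ref{co.weak_ergo_cv:A+betaB} with $B=\partial\Psi$, and use the Fenchel--Young bound $F_{\partial\Psi}(z,p)\leq\Psi(z)+\Psi^*(p)$ together with $\Psi(z)=0$ for $z\in C$ to see that Condition~\eqref{eq.Attouch-Czarnecki} implies Condition~\eqref{eq.Fitzpatrick:A+betaB}, hence \eqref{eq.Brezis-Haraux:A+betaB}. The only differences are cosmetic (you spell out the one-line Fitzpatrick computation and flag the maximal monotonicity of $A+\beta(t)\partial\Psi$, which the paper takes for granted).
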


Indeed, apply  Theorem~\ref{co.weak_ergo_cv:A+betaB} with $B=\partial \Psi$. 
 Recalling that
 $$F_{\partial \Psi}\left(z,\frac{q}{\beta(t)}\right)\leq \Psi(z)+\Psi^*\left(\frac{q}{\beta(t)}\right)=\Psi^*\left(\frac{q}{\beta(t)}\right),$$
 Condition (\ref{eq.Attouch-Czarnecki}) implies Condition (\ref{eq.Fitzpatrick:A+betaB}), which is in turn equivalent to (\ref{eq.Brezis-Haraux:A+betaB}). Hence
 all the assumptions of Theorem~\ref{co.weak_ergo_cv:A+betaB} are fullfilled.

\subsubsection{Proof of Theorem~\ref{co.weak_ergo_cv:A+betaB}}
% \begin{proof} 
Let us start with the following preliminary result.
\begin{lmm}\label{lm.fitz_sum}
Let $A$, $B:H \tto H$ be two monotone operators. Then the following properties hold
\begin{itemize}
\item[$(i)$] For every $(z,p)\in H\times H$,
$$G_{A+B}(z,p)\leq \inf_{q\in H}G_A(z,q)+G_B(z,p-q).$$
\item[$(ii)$] For every $(z,p)\in H\times H$ and every $\lambda>0$, \, $G_{\lambda A}(z,p)=\lambda\, G_A(z,p/\lambda)$.
\item[$(iii)$] For every $z\in \overline{\dom A}$ and $p\in N_{\overline{\dom A}}(z)$, \, $G_A(z,p)\leq G_A(z,0)$.
\end{itemize}
\end{lmm}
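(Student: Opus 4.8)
The plan is to prove the three items directly from the definition $G_M(z,p)=\sup_{(y,v)\in\gph M}\langle z-y,v-p\rangle$, exploiting the basic structure of the graphs of $A+B$, $\lambda A$, and the relation between $N_{\overline{\dom A}}$ and $\gph A$.

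For $(i)$, I would start with $(y,w)\in\gph(A+B)$, so $w=v_A+v_B$ with $v_A\in Ay$, $v_B\in By$. Then for any fixed $q\in H$,
$$\langle z-y, w-p\rangle=\langle z-y, v_A-q\rangle+\langle z-y, v_B-(p-q)\rangle\leq G_A(z,q)+G_B(z,p-q).$$
Since this holds for every such $(y,w)$, taking the supremum over $\gph(A+B)$ gives $G_{A+B}(z,p)\leq G_A(z,q)+G_B(z,p-q)$; as $q$ is arbitrary, we may pass to the infimum over $q\in H$. For $(ii)$, observe $(y,v)\in\gph(\lambda A)\iff (y,v/\lambda)\in\gph A$, so with the substitution $v=\lambda v'$,
$$G_{\lambda A}(z,p)=\sup_{v'\in Ay}\langle z-y,\lambda v'-p\rangle=\lambda\sup_{v'\in Ay}\langle z-y, v'-p/\lambda\rangle=\lambda\, G_A(z,p/\lambda).$$

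The only item requiring a genuine (though small) argument is $(iii)$. Here I would fix $z\in\overline{\dom A}$ and $p\in N_{\overline{\dom A}}(z)$, and for any $(y,v)\in\gph A$ note that $y\in\dom A\subset\overline{\dom A}$, so by definition of the normal cone $\langle p, y-z\rangle\leq 0$, i.e.\ $\langle z-y,-p\rangle\leq 0$. Therefore
$$\langle z-y, v-p\rangle=\langle z-y, v-0\rangle+\langle z-y, 0-p\rangle\leq\langle z-y, v-0\rangle\leq G_A(z,0).$$
Taking the supremum over $(y,v)\in\gph A$ yields $G_A(z,p)\leq G_A(z,0)$. The point I expect to be the only subtlety is making sure the inequality $\langle p,y-z\rangle\le 0$ is applied to points $y$ ranging over $\dom A$ rather than $\overline{\dom A}$ — this is fine since $\dom A\subseteq\overline{\dom A}$ and the normal cone inequality at $z$ holds for all points of $\overline{\dom A}$, in particular for every $y\in\dom A$. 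No compactness, closedness, or maximality of the operators is needed for any of the three parts; plain monotonicity (indeed, just the definitions) suffices.
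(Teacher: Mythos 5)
Your argument is correct, and parts $(ii)$ and $(iii)$ coincide with the paper's proof almost word for word (same substitution $v=\lambda v'$, same splitting $\langle z-y,v-p\rangle=\langle z-y,v\rangle+\langle y-z,p\rangle$ together with the observation that $y\in\dom A\subset\overline{\dom A}$). The only difference is in $(i)$: the paper does not split the graph of $A+B$ by hand, but instead quotes the known inequality $F_{A+B}(z,p)\leq\inf_{q\in H}\{F_A(z,q)+F_B(z,p-q)\}$ for Fitzpatrick functions (Bauschke--McLaren--Sendov) and subtracts the bilinear term $\langle z,p\rangle$ to transfer it to the Br\'ezis--Haraux functions, whereas you prove the estimate directly from the definition of $G$ by writing $w=v_A+v_B$ and inserting an arbitrary $q$. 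Your route is self-contained and, as you note, uses only plain monotonicity (in fact only the definitions); the paper's route is shorter on the page but leans on an external reference. The two are mathematically the same mechanism, since the cited Fitzpatrick inequality is itself obtained by exactly the decomposition you perform, so nothing is lost or gained in generality.
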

\begin{proof} $(i)$  Given $(z,p)\in H\times H$, the following inequality holds true
$$F_{A+B}(z,p)\leq \inf_{q\in H}\left\lbrace F_A(z,q)+F_B(z,p-q)\right\rbrace ,$$
see for example \cite[Proposition 4.2]{BauLarSen}. By subtracting $\langle z,p\rangle$ to each member, we immediately find the announced inequality.\\
$(ii)$ Let $(z,p)\in H\times H$ and $\lambda>0$. From the definition of $G_{\lambda A}(z,p)$, we have
\begin{eqnarray*}
G_{\lambda A}(z,p)&=&\sup_{(y,q)\in\gph(\lambda A)}\langle z-y,q-p\rangle\\
&=&\lambda \sup_{(y,q')\in\gph A}\langle z-y,q'-p/\lambda\rangle=\lambda \,G_A(z,p/\lambda).
\end{eqnarray*}
$(iii)$ Fix $z\in \overline{\dom A}$ and $p\in N_{\overline{\dom A}}(z)$. For every $(y,q)\in \gph A$, we have
\begin{eqnarray*}
\langle z-y,q-p\rangle&=&\langle z-y,q\rangle+ \langle y-z,p\rangle\\
&\leq&\langle z-y,q\rangle\quad \mbox{ since } p\in N_{\overline{\dom A}}(z) \mbox{ and } y\in \dom A\\
&\leq& G_A(z,0).
\end{eqnarray*}
Taking the supremum over $(y,q)\in \gph A$, we deduce that $G_A(z,p)\leq G_A(z,0)$.
\end{proof}
Let us now come back to the proof of Theorem~\ref{co.weak_ergo_cv:A+betaB}.
The main point consists in checking that the assumption (\ref{eq.cond_Brezis-Haraux}) of Theorem \ref{th.ergodic_weak_cv} is verified with
 $A_t=A+\beta(t)\, B$ and $A_\infty=A+N_C$. Let $(z,p)\in \gph(A+N_C)$. Since $p\in Az+N_C(z)$, there exists $q\in N_C(z)$ such that $p-q\in Az$.
 Observe that
 $$\begin{array}{llll}
 G_{A+\beta(t)B}(z,p)&\leq&G_{A}(z,p-q)+G_{\beta(t)B}(z,q)&\mbox{ in view of Lemma \ref{lm.fitz_sum}$(i)$,}\\
 &=&G_{\beta(t)B}(z,q) & \mbox{ since } (z,p-q)\in \gph A,\\
 &=&\beta(t) \,G_B(z,q/\beta(t))& \mbox{ in view of Lemma \ref{lm.fitz_sum}$(ii)$.}
 \end{array}$$
 The assumption $\int_0^{+\infty} \beta(t)\,G_B\left(z,{q}/{\beta(t)}\right)\, dt<+\infty$ then implies that 
 $$\int_0^{+\infty} G_{A+\beta(t) B}(z,p)\, dt<~+\infty.$$
  It suffices now to apply Theorem \ref{th.ergodic_weak_cv}.
 %\end{proof}
 
 \subsection{Coupled operators with multiscale aspects: $A_t=A+\eps(t) B$ with $\eps(t) \to~0$}
 By reversing the roles of the operators $A$ and $B$ and by using a suitable time rescaling, we obtain the following consequence of Theorem \ref{co.weak_ergo_cv:A+betaB}.

\begin{crllr}\label{co.weak_ergo_cv:A+betaB:beta_to_0}
 Let $A$, $B: H\tto H$ be two maximal monotone operators such that the sets $D=A^{-1}(0)$ and $(B+N_D)^{-1}(0)$ are nonempty. Assume that
 the operator $B+N_D$ is maximal monotone. Given a map $\eps:\R_+\to \R_+^*$, assume that the operator $A+\eps (t) B$ is maximal monotone for every $t\geq 0$. Suppose additionally that $\int_0^{+\infty}\eps(t)\, dt=+\infty$ and that
 \begin{equation}\label{eq.Brezis-Haraux:A+betaB:beta_to_0} 
 \forall z\in D, \quad \forall q\in N_D(z), \qquad \int_0^{+\infty} G_A\left(z,\eps(t)\, q\right)\, dt<+\infty.
 \end{equation}
 Then for every strong global solution $x(.)$ of 
 \begin{equation}\label{MAMI_eps}\tag{MAMI$\eps$} 
 \dot x(t)+A(x(t))+\eps(t)\, B(x(t))\ni 0,
 \end{equation}
 there exists $x_\infty\in (B+N_D)^{-1}(0)$ such that 
 $$\frac{1}{t} \int_0^t x(s)\, ds\rightharpoonup~x_\infty  \mbox{ weakly in }  H, \mbox{ as } t\to +\infty.$$
 \end{crllr}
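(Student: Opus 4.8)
\emph{Plan.} The plan is to convert the slowly vanishing coefficient $\eps(t)$ into one tending to infinity by a change of time scale, so that Theorem~\ref{co.weak_ergo_cv:A+betaB} applies once the roles of $A$ and $B$ are interchanged. I would set $\tau(t)=\int_0^t\eps(u)\,du$. Since $\eps>0$ and $\int_0^{+\infty}\eps=+\infty$, the map $\tau$ is an increasing absolutely continuous bijection of $[0,+\infty[$ onto itself, and its inverse $\tau^{-1}$ is again locally absolutely continuous (its derivative is $1/(\eps\circ\tau^{-1})$, whose integral over $[0,S]$ equals $\tau^{-1}(S)<+\infty$). Hence $y:=x\circ\tau^{-1}$ is locally absolutely continuous, and for a.e.\ $s$ the chain rule gives $\dot y(s)=\eps(\tau^{-1}(s))^{-1}\,\dot x(\tau^{-1}(s))$. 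Dividing \eqref{MAMI_eps} by $\eps(t)$ and reading it at $t=\tau^{-1}(s)$ shows that $y$ is a strong global solution of
\[
\dot y(s)+B(y(s))+\tilde\beta(s)\,A(y(s))\ni 0,\qquad \tilde\beta(s):=\frac{1}{\eps(\tau^{-1}(s))}.
\]

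Next I would verify that this rescaled inclusion meets every hypothesis of Theorem~\ref{co.weak_ergo_cv:A+betaB} under the correspondence $A\mapsto B$, $B\mapsto A$, $\beta\mapsto\tilde\beta$. The set $C$ of that theorem becomes $A^{-1}(0)=D$; the operator $B+N_D$ is maximal monotone and $(B+N_D)^{-1}(0)\neq\emptyset$ by assumption; and $B+\tilde\beta(s)A=\eps(t)^{-1}(A+\eps(t)B)$ is a positive multiple of a maximal monotone operator, hence maximal monotone for every $s$. Finally, after the change of variable $s=\tau(t)$, i.e.\ $ds=\eps(t)\,dt$, the Br\'ezis--Haraux condition \eqref{eq.Brezis-Haraux:A+betaB} becomes, for $z\in D$ and $q\in N_D(z)$,
\[
\int_0^{+\infty}\tilde\beta(s)\,G_{A}\!\left(z,\frac{q}{\tilde\beta(s)}\right)ds=\int_0^{+\infty}\frac{1}{\eps(t)}\,G_A(z,\eps(t)q)\,\eps(t)\,dt=\int_0^{+\infty}G_A(z,\eps(t)q)\,dt,
\]
which is finite by \eqref{eq.Brezis-Haraux:A+betaB:beta_to_0}. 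Theorem~\ref{co.weak_ergo_cv:A+betaB} then produces $x_\infty\in(B+N_D)^{-1}(0)$ with $\frac1s\int_0^s y(\sigma)\,d\sigma\rightharpoonup x_\infty$ as $s\to+\infty$.

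The last — and, I expect, the only genuinely delicate — step is to transport this back to the variable $t$, and this is where the main obstacle lies. Undoing the substitution $\sigma=\tau(u)$, $y(\sigma)=x(u)$, $d\sigma=\eps(u)\,du$ gives $\frac1s\int_0^s y(\sigma)\,d\sigma=\big(\int_0^t\eps\big)^{-1}\int_0^t\eps(u)\,x(u)\,du$, so the rescaling delivers weak convergence of the $\eps$-weighted ergodic mean, whereas the statement asks for the plain Cesàro mean $\frac1t\int_0^t x(s)\,ds$. For a merely bounded trajectory these two summation schemes need not share the same weak limit (weighted averaging with $\int\eps=+\infty$ is strictly weaker than Cesàro averaging), so an extra argument is needed. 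Running the energy computation of Theorem~\ref{th.ergodic_weak_cv} directly on \eqref{MAMI_eps}, with $(z,p)\in\gph(B+N_D)$ written as $p=w+q$, $w\in Bz$, $q\in N_D(z)$, and estimating the $A$-term by $G_A(z,\eps(t)q)$ and the $B$-term by monotonicity, yields the weighted inequality
\[
\dot h(t)+\eps(t)\,\langle x(t)-z,p\rangle\le G_A(z,\eps(t)q),\qquad h(t)=\tfrac12\|x(t)-z\|^2.
\]
Taking $p=0$ gives $\dot h_+\in L^1(0,+\infty)$, so property~$(i)$ of Lemma~\ref{lm.Opial-Passty} holds in the original time for every $z\in S_\infty=(B+N_D)^{-1}(0)$ (in particular $x(\cdot)$ is bounded); and integrating the general inequality controls the \emph{weighted} mean but not $\frac1t\int_0^t\langle x-z,p\rangle$. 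Thus property~$(ii)$ of Lemma~\ref{lm.Opial-Passty} for the plain Cesàro mean is exactly the missing ingredient, and I would expect closing it to require either a Tauberian-type passage exploiting the Fej\'er behaviour recorded in property~$(i)$, or an additional regularity/lower-bound hypothesis on $\eps$. Absent such input, the cleanly provable conclusion of the method is convergence of the weighted mean $\big(\int_0^t\eps\big)^{-1}\int_0^t\eps\,x$, and I would flag the plain-Cesàro formulation as the point demanding the most care.
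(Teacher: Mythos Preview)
Your approach is exactly the one the paper uses: rewrite \eqref{MAMI_eps} as $\eps(t)^{-1}\dot x(t)+B(x(t))+\eps(t)^{-1}A(x(t))\ni 0$, perform the time change $s=\sigma(t)=\int_0^t\eps(u)\,du$, set $y(s)=x(\sigma^{-1}(s))$ and $\alpha(s)=1/\eps(\sigma^{-1}(s))$, check that condition \eqref{eq.Brezis-Haraux:A+betaB:beta_to_0} becomes condition \eqref{eq.Brezis-Haraux:A+betaB} for the rescaled system, and invoke Theorem~\ref{co.weak_ergo_cv:A+betaB} with the roles of $A$ and $B$ swapped. Up to this point your argument and the paper's coincide verbatim.

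The discrepancy you flag is real and well spotted. Theorem~\ref{co.weak_ergo_cv:A+betaB} yields $\frac{1}{s}\int_0^s y(\sigma)\,d\sigma\rightharpoonup x_\infty$, and undoing the substitution gives only the $\eps$-weighted ergodic mean $\big(\int_0^t\eps\big)^{-1}\int_0^t\eps(u)x(u)\,du\rightharpoonup x_\infty$, not the plain Ces\`aro mean $\frac{1}{t}\int_0^t x(u)\,du$ asserted in the statement. The paper's proof closes with ``The conclusion follows immediately'' and does not address this transfer; so the gap you identify is present in the paper as well. Your direct energy computation is also correct: it gives item~$(i)$ of Lemma~\ref{lm.Opial-Passty} in the original time variable, but the displayed inequality $\dot h(t)+\eps(t)\langle x(t)-z,p\rangle\le G_A(z,\eps(t)q)$ controls only the weighted pairing after integration, which is precisely the obstruction you describe. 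In short, your analysis is sharper than the paper's on this point; what the rescaling argument cleanly delivers is convergence of the $\eps$-weighted ergodic average, and the plain-Ces\`aro formulation in the corollary would need an additional argument that neither you nor the paper supplies.
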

 \begin{proof}  It is done by a time rescaling, following~\cite{AttCza}.  Let us rewrite the dynamical system \eqref{MAMI_eps} as
 $$\frac{1}{\eps(t)}\dot x(t)+B(x(t))+\frac{1}{\eps(t)} A(x(t))\ni 0.$$
 Then use the time rescaling $s=\sigma(t)=\int_0^t \eps(u)\, du$. Define $y(.)$ and $\alpha(.)$ by $y(s)= x(\sigma^{-1}(s))$ and $\alpha(s)=1/\eps(\sigma^{-1}(s))$. We then have $\dot y(s)=\dot x(t)/\eps(t)$, so that $y(.)$ satisfies the following differential inclusion
 \begin{equation*}%\label{eq.syst-dyn:s}
 \dot y(s)+B(y(s))+\alpha(s)A(y(s))\ni 0.
 \end{equation*}
In terms of the variable $s$, condition (\ref{eq.Brezis-Haraux:A+betaB:beta_to_0}) can be translated as  
$$\forall z\in D, \quad \forall q\in N_D(z), \qquad \int_0^{+\infty} \alpha(s)\,G_A\left(z,\frac{q}{\alpha(s)}\right)\, ds<+\infty.$$
The assumptions of Theorem \ref{co.weak_ergo_cv:A+betaB} are satisfied, after reversing the roles of the operators $A$ and $B$. The conclusion follows immediately. 
 \end{proof}
Condition $\int_0^{+\infty}\eps (t)\, dt=+\infty$ expresses that $\eps(t)$ does not tend too fast toward zero as $t\to +\infty$. On the other hand, condition 
(\ref{eq.Brezis-Haraux:A+betaB:beta_to_0}) prevents the parameter $\eps(t)$ from converging  very slowly toward zero. Hence the conditions in Corollary \ref{co.weak_ergo_cv:A+betaB:beta_to_0} imply a moderately slow convergence $\eps(t)\to 0$ as $t\to +\infty$. Let us now analyze the case $\int_0^{+\infty}\eps (t)\, dt<+\infty$ corresponding to a fast decaying parameter.

 \begin{crllr}\label{co.weak_ergo_cv:A+epsB}
 Let $A$, $B: H\tto H$ be two maximal monotone operators such that $A+N_{\overline{\dom B}}$ is maximal monotone and $(A+N_{\overline{\dom B}})^{-1}(0)\neq\emptyset$. Given a map $\eps:\R_+\to \R_+$, assume that the operator $A+\eps (t) B$ is maximal monotone for every $t\geq 0$. Suppose additionally that $\int_0^{+\infty} \eps(t)\, dt<+\infty$ and that
 $G_B(z,0)<+\infty$ for every $z\in \dom A \cap\overline{\dom B}$.
 Then for every strong global solution $x(.)$ of 
 \begin{equation}\tag{\ref{MAMI_eps}}
 \dot x(t)+A(x(t))+\eps(t)\, B(x(t))\ni 0,
 \end{equation}

 there exists $x_\infty\in (A+N_{\overline{\dom B}})^{-1}(0)$ such that $\frac{1}{t} \int_0^t x(s)\, ds\rightharpoonup~x_\infty$ weakly in $H$, as $t\to +\infty$.
 \end{crllr}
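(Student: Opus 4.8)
The plan is to deduce Corollary~\ref{co.weak_ergo_cv:A+epsB} from Theorem~\ref{co.weak_ergo_cv:A+betaB} by specializing the structured operator $A_t = A + \eps(t)B$ and checking the Br\'ezis--Haraux summability condition (\ref{eq.cond_Brezis-Haraux}) of Theorem~\ref{th.ergodic_weak_cv} directly, with $A_\infty = A + N_{\overline{\dom B}}$. First I would fix $(z,p) \in \gph(A + N_{\overline{\dom B}})$, so that $p = r + q$ with $r \in Az$ and $q \in N_{\overline{\dom B}}(z)$, where necessarily $z \in \dom A \cap \overline{\dom B}$. Using Lemma~\ref{lm.fitz_sum}$(i)$ to split $G_{A + \eps(t)B}(z,p) \le G_A(z, r) + G_{\eps(t)B}(z, q)$, the first term vanishes since $(z,r) \in \gph A$, and Lemma~\ref{lm.fitz_sum}$(ii)$ turns the second into $\eps(t)\, G_B(z, q/\eps(t))$. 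The key point is then to bound $\eps(t)\, G_B(z, q/\eps(t))$ by $\eps(t)\, G_B(z,0)$: since $q \in N_{\overline{\dom B}}(z)$, for each $\lambda>0$ we have $\lambda^{-1} q \in N_{\overline{\dom B}}(z)$ as well (the normal cone is a cone), so Lemma~\ref{lm.fitz_sum}$(iii)$ applied to the operator $B$ at the point $z \in \overline{\dom B}$ with the normal vector $q/\eps(t)$ gives $G_B(z, q/\eps(t)) \le G_B(z,0)$.

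Combining these observations yields the pointwise estimate $G_{A + \eps(t)B}(z,p) \le \eps(t)\, G_B(z,0)$ for almost every $t \ge 0$. Since $z \in \dom A \cap \overline{\dom B}$, the hypothesis $G_B(z,0) < +\infty$ makes the right-hand side a finite constant times $\eps(t)$, and the assumption $\int_0^{+\infty} \eps(t)\, dt < +\infty$ then gives $\int_0^{+\infty} G_{A + \eps(t)B}(z,p)\, dt < +\infty$. Thus Condition~(\ref{eq.cond_Brezis-Haraux}) holds for $A_t = A + \eps(t)B$ and $A_\infty = A + N_{\overline{\dom B}}$; since $A + N_{\overline{\dom B}}$ is maximal monotone with nonempty zero set by hypothesis, Theorem~\ref{th.ergodic_weak_cv} applies and delivers the weak ergodic convergence $\frac1t \int_0^t x(s)\, ds \rightharpoonup x_\infty$ to some $x_\infty \in (A + N_{\overline{\dom B}})^{-1}(0)$.

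The one subtlety I would take care to address is the applicability of Lemma~\ref{lm.fitz_sum}$(iii)$: it is stated for $z \in \overline{\dom A}$ and $p \in N_{\overline{\dom A}}(z)$, and here it must be invoked with the roles played by $B$ and by the vector $q/\eps(t)$; this is legitimate precisely because $z \in \overline{\dom B}$ and $q/\eps(t) \in N_{\overline{\dom B}}(z)$, the latter because $N_{\overline{\dom B}}(z)$ is a cone. The rest is routine. I do not expect any genuine obstacle here — unlike the proof of Theorem~\ref{co.weak_ergo_cv:A+betaB}, no time rescaling is needed, since the finite integral $\int_0^\infty \eps(t)\,dt$ is already exactly the summability we want; the whole content is the three-line majorization $G_{A+\eps(t)B}(z,p) \le \eps(t) G_B(z,0)$ built from parts $(i)$, $(ii)$, $(iii)$ of Lemma~\ref{lm.fitz_sum}.
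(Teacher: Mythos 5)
Your argument is exactly the paper's proof: verify Condition (\ref{eq.cond_Brezis-Haraux}) for $A_t=A+\eps(t)B$ and $A_\infty=A+N_{\overline{\dom B}}$ via the decomposition $p=r+q$, the majorization $G_{A+\eps(t)B}(z,p)\leq \eps(t)\,G_B(z,q/\eps(t))$ from Lemma \ref{lm.fitz_sum}$(i)$-$(ii)$ (as in the proof of Theorem \ref{co.weak_ergo_cv:A+betaB}), then Lemma \ref{lm.fitz_sum}$(iii)$ to bound this by $\eps(t)\,G_B(z,0)$, and conclude with Theorem \ref{th.ergodic_weak_cv}. Your explicit remark that $q/\eps(t)\in N_{\overline{\dom B}}(z)$ because the normal cone is a cone is precisely the (implicit) justification the paper uses, so the proposal is correct and essentially identical to the paper's proof.
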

 \begin{proof} The main point consists in checking that the assumption (\ref{eq.cond_Brezis-Haraux}) of Theorem \ref{th.ergodic_weak_cv} is verified with
 $A_t=A+\eps(t)\, B$ and $A_\infty=A+N_{\overline{\dom B}}$. Let $(z,p)\in \gph(A+~N_{\overline{\dom B}})$. Since $p\in Az+N_{\overline{\dom B}}(z)$, there exists $q\in N_{\overline{\dom B}}(z)$ such that $p-q\in Az$.
 By arguing as in the proof of Theorem~\ref{co.weak_ergo_cv:A+betaB}, we find
 $$G_{A+\eps(t)B}(z,p)\leq \eps(t) \,G_B(z,q/\eps(t)).$$
 Recalling that $q\in N_{\overline{\dom B}}(z)$, we deduce from Lemma \ref{lm.fitz_sum}$(iii)$ that $G_B(z,q/\eps(t))\leq G_B(z,0)$ and hence
 $$G_{A+\eps(t)B}(z,p)\leq \eps(t) \,G_B(z,0).$$
 Since $\int_0^{+\infty} \eps(t)\, dt<+\infty$ and $G_B\left(z,0\right)<+\infty$ by assumption, this implies that $\int_0^{+\infty} G_{A+\eps(t)\, B}(z,p)\, dt<~+\infty$. It suffices now to apply Theorem \ref{th.ergodic_weak_cv}.
 \end{proof}
 \begin{rmrk} Assume that $B=\partial \Psi$ for a lower semicontinuous convex function $\Psi:~H\to \rinf$. 
 Recalling that
 $$G_{\partial \Psi}\left(z,0\right)=F_{\partial \Psi}\left(z,0\right)\leq \Psi(z)+\Psi^*(0)=\Psi(z)-\inf_H\Psi,$$
 we deduce that $G_{\partial \Psi}\left(z,0\right)<+\infty$ if $z\in \dom \Psi$ and $\inf_H \Psi>-\infty$.
 \end{rmrk}
 
\section{Nonautonomous subgradient inclusion} \label{se.NAGI}
Let us consider the following nonautonomous subgradient inclusion
\begin{equation}\label{eq.NAGI} \tag{NAGI}
\dot x(t)+\partial \phi_t(x(t))\ni 0, \qquad t\geq 0,
\end{equation}
where for every $t\geq 0$, $\phi_t:H\to \rinf$ is a closed convex proper function.
As in Section \ref{se.NAMI}, a map $x:[0,+\infty[\to H$ is said to be a strong global solution of \eqref{eq.NAGI} if it is absolutely continuous on any bounded interval $[0,T]$, and if \eqref{eq.NAGI} holds for almost every $t>0$. Equation \eqref{eq.NAGI} is a particular case of \eqref{eq.NAMI}, since the operator $A_t=\partial \phi_t$ is maximal monotone for every $t\geq 0$. In the framework of subdifferential operators, we can make precise the convergence results of Section \ref{se.NAMI}, and show the convergence  (instead of the ergodic convergence) of the trajectories.
 
\smallskip
%\begin{rmrk}
In the autonomous case, $\phi_t\equiv \phi$ for every $t\geq 0$, and \eqref{eq.NAGI} reduces to the steepest descent system
\begin{equation}\label{SD} \tag{SD}
\dot x(t)+\partial \phi(x(t))\ni 0, \qquad t\geq 0.
\end{equation}
Bruck~\cite[Theorem 4]{Bru} gives the weak convergence of the trajectories of \eqref{SD}, when $\argmin \phi\ne \emptyset$.  It can be derived directly from the Baillon-Br\'ezis theorem \cite{BaiBre}. The proof relies on a global estimate of the time derivative, see \cite[Theorem~5]{Bre78}, by using the equality
$$x(t)-\frac{1}{t} \int_0^t x(s)\, ds=\frac{1}{t} \int_0^t \dot{x}(s)\,s\, ds.$$
If one obtains the same estimate $\lim_{t\to +\infty}t\dot{x}(t)=0$ in the present case, the weak convergence of the trajectories of \eqref{eq.NAGI} is a direct consequence of the weak ergodic convergence of the trajectories of \eqref{eq.NAMI}. However, the extension of the energetical argument to the nonautonomous case, leading to the estimate, remains an open question in our general setting. So we provide specific results and proofs in the subgradient case.
%\end{rmrk}
%
\subsection{Case of a nonincreasing family $(\phi_t)_{t\geq 0}$: energetical approach}
In this subsection, we assume a monotonicity property on the filtered family $(\phi_t)_{t\geq 0}$. This allows us to use energetical arguments in order to derive convergence of the trajectories of \eqref{eq.NAGI}.
\begin{thrm}\label{th.nonincreasing}
Let $\{\phi_t; \, t\geq 0\}$ be a family of closed convex proper functions from $H$ to $\rinf$. Assume that $\phi_t\leq \phi_s$ for every $s$, $t\geq 0$ such that $s\leq t$. Let us set $\phi_\infty={\rm cl}(\inf_{t\geq 0}\phi_t)$.
Let $x(.)$ be a strong global solution of \eqref{eq.NAGI} such that the function $t\mapsto \phi_t(x(t))$ is locally absolutely continuous. Then we have 
\begin{enumerate}
\item[$(i)$] The function $t\mapsto \phi_t(x(t))$ is nonincreasing, and $\displaystyle{\lim_{t\to +\infty} \phi_t(x(t))=\inf_H \phi_\infty}$.
\end{enumerate}
Additionally assume that $\displaystyle{\inf_H \phi_\infty >-\infty}$. Then
\begin{enumerate}
\item[$(ii)$] $\displaystyle{\int_0^{+\infty}\|\dot x(t)\|^2\, dt<+\infty}$.
\end{enumerate}
Assume moreover that $S_\infty=\argmin \phi_\infty\neq \emptyset$, and that
\begin{mycondition}\label{eq.cond_L1}%\tag{$H_5$}
\forall z\in S_\infty, \quad \int_0^{+\infty}G_{\partial \phi_t}(z,0)\, dt<+\infty.
\end{mycondition}
Then 
\begin{enumerate}
\item[$(iii)$] there exists $x_\infty\in S_\infty$ such that  $w- \lim_{t\to +\infty}x(t)= x_\infty$.
\end{enumerate}
\end{thrm}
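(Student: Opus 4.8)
The plan is to prove the three assertions in order, exploiting the monotonicity of the family $(\phi_t)$ for $(i)$–$(ii)$ and then the Opial lemma for $(iii)$. For $(i)$: since $t\mapsto \phi_t(x(t))$ is locally absolutely continuous, I would compute its derivative for a.e. $t$. By the chain rule for convex functions composed with absolutely continuous curves, $\frac{d}{dt}\phi_t(x(t)) = \langle \xi(t),\dot x(t)\rangle + (\text{partial derivative in } t)$, where $\xi(t)\in\partial\phi_t(x(t))$; the time-monotonicity $\phi_t\le\phi_s$ for $s\le t$ forces the partial-in-$t$ contribution to be $\le 0$, and $\langle \xi(t),\dot x(t)\rangle = -\|\dot x(t)\|^2\le 0$ because $-\dot x(t)\in\partial\phi_t(x(t))$. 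Hence $t\mapsto\phi_t(x(t))$ is nonincreasing, so it has a limit $\ell\ge\inf_H\phi_\infty$. For the reverse inequality, one compares with a fixed competitor: for any $z\in\dom\phi_\infty$ and $\eta>0$ pick $s_0$ and $z$ with $\phi_{s_0}(z)\le\inf_H\phi_\infty+\eta$; convexity gives $\phi_t(x(t))\le\phi_t(z)+\langle -\dot x(t),x(t)-z\rangle\le\phi_{s_0}(z)+\frac{d}{dt}\bigl(\tfrac12\|x(t)-z\|^2\bigr)$ for $t\ge s_0$, and integrating together with the monotone decrease of the left side pins $\ell\le\inf_H\phi_\infty$.

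For $(ii)$: integrate the inequality $\frac{d}{dt}\phi_t(x(t))\le -\|\dot x(t)\|^2$ over $[0,T]$. This gives $\int_0^T\|\dot x(t)\|^2\,dt\le \phi_0(x(0))-\phi_T(x(T))\le\phi_0(x(0))-\inf_H\phi_\infty<+\infty$ once we know $\inf_H\phi_\infty>-\infty$, and letting $T\to+\infty$ yields the claim. For $(iii)$: apply the Opial lemma (Lemma 2.1) with $S=S_\infty$. Condition $(i)$ of Opial, existence of $\lim_{t\to+\infty}\|x(t)-z\|$ for $z\in S_\infty$: set $h(t)=\tfrac12\|x(t)-z\|^2$, so $\dot h(t)=\langle x(t)-z,\dot x(t)\rangle$; since $-\dot x(t)\in\partial\phi_t(x(t))$ this is $\le G_{\partial\phi_t}(z,0)$ by definition of the Br\'ezis--Haraux function, which is in $L^1(0,+\infty)$ by Condition~$(\Sigma\theequation)$\footnote{Condition \eqref{eq.cond_L1}.}, so $\dot h_+\in L^1$ and $\lim h(t)$ exists. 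Condition $(ii)$ of Opial, every weak cluster point lies in $S_\infty$: if $x(t_n)\rightharpoonup x_\infty$, I would use $(i)$ together with weak lower semicontinuity of $\phi_\infty$ to get $\phi_\infty(x_\infty)\le\liminf\phi_{t_n}(x(t_n))=\inf_H\phi_\infty$ — here the needed bound $\phi_\infty(x_\infty)\le\liminf\phi_{t_n}(x(t_n))$ follows from $\phi_\infty=\mathrm{cl}(\inf_t\phi_t)\le\phi_{t_n}$ and lower semicontinuity — hence $x_\infty\in\argmin\phi_\infty=S_\infty$.

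The main obstacle is the rigorous justification of the differentiation formula for $t\mapsto\phi_t(x(t))$ and, in particular, extracting from the hypothesis "$\phi_t\le\phi_s$ for $s\le t$" the sign of the time-partial term; the clean way around differentiating the two-variable object is to bypass it entirely and work with finite differences: for $s\le t$, $\phi_t(x(t))-\phi_s(x(s))=[\phi_t(x(t))-\phi_t(x(s))]+[\phi_t(x(s))-\phi_s(x(s))]$, bound the first bracket using $\partial\phi_t$ and the second by $\le 0$ from monotonicity, then pass to the integral form. A secondary point to handle carefully is that $\phi_\infty=\mathrm{cl}(\inf_t\phi_t)$ may differ from $\inf_t\phi_t$ only on a closure, but since $\inf_H\phi_\infty=\inf_H(\inf_t\phi_t)=\inf_{t}\inf_H\phi_t$ this does not affect the value computations, and the lower-semicontinuity/weak-cluster argument uses only $\phi_\infty\le\phi_t$ pointwise, which holds after taking the closed convex hull since each $\phi_t$ is already closed convex.
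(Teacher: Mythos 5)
Your proposal is correct and follows essentially the same route as the paper: the same finite-difference/subdifferential argument giving $\frac{d}{dt}\phi_t(x(t))\le-\|\dot x(t)\|^2$ a.e. for $(i)$--$(ii)$, and the same Opial argument with $h(t)=\tfrac12\|x(t)-z\|^2$ and $\dot h(t)\le G_{\partial\phi_t}(z,0)$ for $(iii)$. Two small remarks: in the reverse bound for $(i)$ the subdifferential inequality gives $\phi_t(x(t))\le\phi_t(z)-\frac{d}{dt}\bigl(\tfrac12\|x(t)-z\|^2\bigr)$ (your displayed ``$+$'' is a sign slip, harmless for the intended integration argument); and in the cluster-point step you use $\phi_\infty\le\phi_{t_n}$ together with weak lower semicontinuity of $\phi_\infty$ directly, which is a slightly more elementary substitute for the paper's appeal to Mosco convergence of nonincreasing families and yields the same conclusion.
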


\begin{proof} $(i)$ Let $t>0$ be such that the derivatives $\dot x(t)$ and $\frac{d}{dt}\phi_t(x(t))$ exist at $t$ and such that the inclusion $-\dot x(t)\in \partial \phi_t(x(t))$ holds true. The subdifferential inequality yields for every $\tau\in ]0,t[$
$$\phi_t(x(t-\tau))\geq \phi_t(x(t))+\langle -\dot x(t), x(t-\tau)-x(t)\rangle.$$
Recalling that the family $\{\phi_t; \, t\geq~0\}$ is nonincreasing, we have $\phi_{t-\tau}(x(t-\tau))\geq \phi_t(x(t-\tau))$, thus implying that
$$\phi_{t-\tau}(x(t-\tau))- \phi_t(x(t))\geq \langle -\dot x(t), x(t-\tau)-x(t)\rangle.$$
Dividing by $\tau$ and taking the limit as $\tau \to 0$, we find 
\begin{equation}\label{eq.ener_decay}
-\frac{d}{dt}\phi_t(x(t))\geq \|\dot x(t)\|^2\geq 0.
\end{equation}
Since this is true for almost every $t>0$, the map $t\mapsto \phi_t(x(t))$ is nonincreasing, and hence converges toward some $l\in \R\cup \{-\infty\}$. Using that $\phi_t\geq \phi_\infty$ for every $t\geq 0$, we obtain
\begin{equation}\label{eq. mino_l}
l=\lim_{t\to +\infty}\phi_t(x(t))\geq \inf_H \phi_\infty.
\end{equation}
Let us now fix $z\in H$, and define the auxiliary function $h:\R_+\to \R_+$ by $h(t)=\demi \|x(t)-z\|^2$. By differentiating, and using the subdifferential inequality, we find for almost every $t\geq 0$
\begin{eqnarray}
\dot h(t)&=&\langle x(t)-z, \dot x(t)\rangle\label{eq.der_h}\\
&\leq&\phi_t(z)-\phi_t(x(t)).\nonumber
\end{eqnarray}
Integrating this inequality, we get
$$\int_0^t\left[ \phi_s(z)-\phi_s(x(s))\right]\, ds \geq h(t)-h(0)\geq -h(0).$$
We immediately deduce that
$\lim_{s\to +\infty}\phi_s(z) \geq \lim_{s\to +\infty} \phi_s(x(s))=l.$ Since this is true for every $z\in H$, the function $\inf_{s\geq 0}\phi_s= \lim_{s\to +\infty}\phi_s$ is minorized by $l$. It ensues that the function  $\phi_\infty={\rm cl}\left(\inf_{s\geq 0}\phi_s\right)$
is also minorized by $l$. In view of (\ref{eq. mino_l}), we conclude that $l=\inf_H \phi_\infty$.\\
$(ii)$ Integrating the first inequality of (\ref{eq.ener_decay}), we find for every $t\geq 0$
$$\int_0^t\|\dot x(s)\|^2\, ds\leq \phi_0(x(0))-\phi_t(x(t)).$$
Taking the limit as $t\to +\infty$, we deduce from $(i)$ that
\begin{eqnarray*}
\int_0^{+\infty}\|\dot x(s)\|^2\, ds&\leq& \phi_0(x(0))-\inf_H \phi_\infty\\
&<&+\infty\quad \mbox{ since } \inf_H \phi_\infty>-\infty \mbox{ by assumption.}
\end{eqnarray*}
$(iii)$ The proof of the weak convergence $x(t)\rightharpoonup x_\infty$ is based on the Opial lemma. Fix $z\in S_\infty$ and consider the function $h$ defined above by $h(t)=\demi \|x(t)-z\|^2$. Coming back to equality (\ref{eq.der_h}) and recalling that $-\dot x(t)\in \partial \phi_t(x(t))$ for almost every $t\geq 0$, we find 
 $$\dot h(t)\leq G_{\partial \phi_t}(z,0)\quad \mbox{a.e. on } \R_+,$$
where $G_{\partial \phi_t}$ is the Br\'ezis-Haraux function associated to the operator $\partial \phi_t$. It follows from this inequality and assumption (\ref{eq.cond_L1}) that  $\dot h_+\in L^1(0,+\infty)$. From a classical lemma, this implies that $\lim_{t\to +\infty} h(t)$ exists in $\R$. It suffices now to prove that every 
 sequential weak cluster point of $x(.)$ belongs to $S_\infty$. Let $x_\infty\in H$ and let $t_n\to +\infty$ a sequence such that $x(t_n)\rightharpoonup x_\infty$ as $n\to +\infty$. Since the family $(\phi_t)_{t\geq 0}$ is nonincreasing, it Mosco converges toward $\phi_\infty={\rm cl}\left(\inf_{s\geq 0}\phi_s\right)$, see \cite[Theorem 3.20]{Att}. It ensues that
\begin{eqnarray*}
\phi_\infty(x_\infty)&\leq &\liminf_{n\to +\infty} \phi_{t_n}(x(t_n))\\
&=&\liminf_{t\to +\infty} \phi_{t}(x(t))=\min_H \phi_\infty \quad \mbox{ in view of $(i)$.}
\end{eqnarray*}
We conclude that $x_\infty\in S_\infty$. It suffices then to apply the Opial lemma.
\end{proof}

\begin{rmrk}\label{rk.L1_vs_L1bis}
Condition (\ref{eq.cond_L1}) is nothing else as condition (\ref{eq.cond_Brezis-Haraux}) applied with $A_t=\partial \phi_t$ and $p=0$. Recalling that
$$G_{\partial \phi_t}(z,0)=F_{\partial \phi_t}(z,0)\leq \phi_t(z)+\phi_t^*(0)=\phi_t(z)-\inf_H\phi_t,$$
we deduce that assumption (\ref{eq.cond_L1}) is implied by
\begin{mycondition}\label{eq.cond_L1_bis}%\tag{$H_5'$}
\forall z\in S_\infty, \quad \int_0^{+\infty}\left[\phi_t(z)-\inf_H\phi_t\right]\, dt<+\infty.
\end{mycondition}
\end{rmrk}

\begin{rmrk}
 Assumptions (\ref{eq.cond_L1}) and (\ref{eq.cond_L1_bis}) seem to be new in the study of the asymptotic behavior of the dynamical system \eqref{eq.NAGI}. Furuya, Miyashiba \& Kenmochi obtained the weak convergence of the trajectories
 of \eqref{eq.NAGI} under an alternative condition, see \cite[Theorem 2]{FurMiyKen}. Their condition also requires some quantity to be summable, but it differs significantly from (\ref{eq.cond_L1}) and (\ref{eq.cond_L1_bis}).
In the framework of the diagonal proximal point method, Lemaire used a discrete anologue of (\ref{eq.cond_L1_bis}) to derive the weak convergence of the iterates, see \cite[Section 4]{Lem}.
\end{rmrk}
\begin{crllr}
Let $\Psi$, $\Phi: H\to \rinf$ be closed convex functions such that $\dom \Psi \cap \dom \Phi\neq \emptyset$. Assume that $\Psi$ is nonnegative. Let $\eps: \R_+\to \R_+$
be a nonincreasing map such that $\lim_{t\to +\infty}\eps(t)=0$.
Let $x(.)$ be a strong global solution of 
\begin{equation}\tag{\ref{eq.viscosity}}
\dot x(t)+\partial \Phi(x(t))+\eps(t)\, \partial \Psi(x(t))\ni 0,
\end{equation}
such that the function $t\mapsto \Phi(x(t))+\eps(t) \Psi(x(t))$ is locally absolutely continuous.
Then we have 
\begin{enumerate}
\item[$(i)$] The function $t\mapsto \Phi(x(t))+\eps(t) \Psi(x(t))$ is nonincreasing and tends toward $\inf_H(\Phi+\delta_{\dom \Psi})$ as $t\to +\infty$.
\end{enumerate}
 Additionally assume that $\inf_H(\Phi+\delta_{\dom \Psi})>-\infty$, then
\begin{enumerate}
\item[$(ii)$]  $\int_0^{+\infty}\|\dot x(t)\|^2\, dt<+\infty$.
\end{enumerate}
Assume moreover that the set $S_\infty=\argmin({\rm cl}(\Phi+\delta_{\dom \Psi}))$ is not empty and included in $\dom \Psi$.
If $\int_0^{+\infty}\eps(t)\, dt<+\infty$, then
\begin{enumerate}\item[$(iii)$]
there exists $x_\infty\in S_\infty$ such that $w- \lim_{t\to +\infty}x(t)= x_\infty$. 
\end{enumerate}
\end{crllr}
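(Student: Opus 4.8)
The approach is to recognize that \eqref{eq.viscosity} is an instance of \eqref{eq.NAGI} for the family $\phi_t := \Phi + \eps(t)\Psi$, and then to apply Theorem~\ref{th.nonincreasing}. The first task is therefore to check its hypotheses. Each $\phi_t$ is closed, convex and proper, being a sum of two closed convex functions with $\dom\phi_t \supseteq \dom\Phi\cap\dom\Psi\neq\emptyset$. The family $(\phi_t)_{t\geq 0}$ is nonincreasing in $t$: for $s\leq t$ one has $\eps(s)\geq\eps(t)\geq 0$, and since $\Psi\geq 0$ this gives $\eps(s)\Psi\geq\eps(t)\Psi$, hence $\phi_s\geq\phi_t$. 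Finally, the standing regularity hypothesis of the corollary, that $t\mapsto\Phi(x(t))+\eps(t)\Psi(x(t))$ be locally absolutely continuous, is exactly local absolute continuity of $t\mapsto\phi_t(x(t))$. (We tacitly assume $\eps(t)>0$ for all $t$; the alternative, in which $\eps$ vanishes identically beyond some time, makes the inclusion autonomous there and reduces to classical results.)

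Next I would identify $\phi_\infty = \cl(\inf_{t\geq 0}\phi_t)$. Since $\Psi$ is finite on $\dom\Psi$ and $\inf_{t\geq 0}\eps(t)=0$, one computes $\inf_{t\geq 0}\phi_t = \Phi+\delta_{\dom\Psi}$, so $\phi_\infty = \cl(\Phi+\delta_{\dom\Psi})$; since closure preserves infima, $\inf_H\phi_\infty = \inf_H(\Phi+\delta_{\dom\Psi})$, and $\argmin\phi_\infty = \argmin\cl(\Phi+\delta_{\dom\Psi})=S_\infty$. With these identifications, assertions $(i)$ and $(ii)$ of the corollary are immediately assertions $(i)$ and $(ii)$ of Theorem~\ref{th.nonincreasing}.

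For $(iii)$ the only real work is to verify Condition~(\ref{eq.cond_L1}) for the operators $\partial\phi_t$. By Remark~\ref{rk.L1_vs_L1bis} it suffices to establish the stronger Condition~(\ref{eq.cond_L1_bis}), namely $\int_0^{+\infty}\bigl[\phi_t(z)-\inf_H\phi_t\bigr]\,dt<+\infty$ for each $z\in S_\infty$. I would bound the integrand using two facts. First, $\phi_t\geq\phi_\infty$ for every $t$ (because $\phi_\infty=\cl(\inf_s\phi_s)\leq\phi_t$), hence $\inf_H\phi_t\geq\inf_H\phi_\infty =: m$. Second, since $\Phi$ is closed and $\Phi\leq\Phi+\delta_{\dom\Psi}$, one gets $\Phi=\cl\Phi\leq\cl(\Phi+\delta_{\dom\Psi})=\phi_\infty\leq\Phi+\delta_{\dom\Psi}$; evaluating at $z\in S_\infty\subseteq\dom\Psi$ the outer members both equal $\Phi(z)$, so $\phi_\infty(z)=\Phi(z)$, and since $z$ minimizes $\phi_\infty$ this forces $\Phi(z)=m$. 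Combining, $\phi_t(z)-\inf_H\phi_t\leq \Phi(z)+\eps(t)\Psi(z)-m=\eps(t)\Psi(z)$, whence $\int_0^{+\infty}\bigl[\phi_t(z)-\inf_H\phi_t\bigr]\,dt\leq\Psi(z)\int_0^{+\infty}\eps(t)\,dt<+\infty$ by hypothesis. Thus Condition~(\ref{eq.cond_L1}) holds and Theorem~\ref{th.nonincreasing}$(iii)$ delivers the weak convergence $x(t)\rightharpoonup x_\infty$ to some $x_\infty\in S_\infty$.

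The step I expect to be the only genuine obstacle — mild but essential — is the equality $\phi_\infty(z)=\Phi(z)$ for $z\in S_\infty$: it says that closing $\Phi+\delta_{\dom\Psi}$ does not strictly lower its value at points of $\dom\Psi$, and this is precisely where the hypothesis $S_\infty\subseteq\dom\Psi$, together with $\Phi$ being closed, is used. Without it the bound on $\phi_t(z)-\inf_H\phi_t$ would carry a nonvanishing constant $\Phi(z)-m$, and the summability against $\eps$ would fail.
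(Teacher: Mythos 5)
Your proposal is correct and follows essentially the same route as the paper: reduce to Theorem~\ref{th.nonincreasing} with $\phi_t=\Phi+\eps(t)\Psi$ and $\phi_\infty=\cl(\Phi+\delta_{\dom\Psi})$, then verify Condition~(\ref{eq.cond_L1_bis}) via the bound $\phi_t(z)-\inf_H\phi_t\leq\eps(t)\Psi(z)$, obtained exactly as in the paper from $\Phi\leq\phi_\infty$ (closedness of $\Phi$) and $\inf_H\phi_t\geq\min_H\phi_\infty$. Your explicit remark on the case where $\eps$ vanishes identically beyond some time is a small point the paper leaves implicit.
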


\begin{proof} Let us check that the assumptions of Theorem \ref{th.nonincreasing} are satisfied for $\phi_t=\Phi +\eps(t)\Psi$. Since the map $\eps$ converges
nonincreasingly toward $0$, and since $\Psi$ is nonnegative, the function $ t \mapsto \phi_t$ is nonincreasing, and tends toward  $\Phi+\delta_{\dom \Psi}$ as $t\to~+\infty$.
We denote by $\phi_\infty={\rm cl}(\Phi+\delta_{\dom \Psi})$ the lower semicontinuous regularization of the function $\Phi+\delta_{\dom \Psi}$. Observing that
$\inf_H \phi_\infty=\inf_H(\Phi+\delta_{\dom \Psi})$, item $(i)$ (resp. $(ii)$) is a direct consequence of Theorem \ref{th.nonincreasing}$(i)$ (resp. $(ii)$). Let us now check that condition  (\ref{eq.cond_L1_bis}) is satisfied, thus implying the weaker condition (\ref{eq.cond_L1}). Given $z\in S_\infty$, we have
\begin{equation}\label{eq.majo_phi_t}
\phi_t(z)=\Phi(z)+\eps(t)\Psi(z)\leq \phi_\infty(z)+\eps(t)\Psi(z)=\min_H \phi_\infty +\eps(t)\Psi(z),
\end{equation}
the inequality $\Phi\leq \phi_\infty$ follows from the inequality $\delta_{\dom \Psi}\geq 0$ and the closedness of $\Phi$. On the other hand, we have 
$\phi_\infty\leq\phi_t $, and hence $\min_H \phi_\infty\leq \inf_H \phi_t $ for every $t\geq 0$. In view of (\ref{eq.majo_phi_t}), we deduce that
$$\phi_t(z)-\inf_H \phi_t\leq \eps(t)\Psi(z).$$
Since $S_\infty\subset \dom \Psi$, we have $\Psi(z)<+\infty$, and condition (\ref{eq.cond_L1_bis}) is then an immediate consequence of the assumption $\int_0^{+\infty}\eps(t)\, dt<+\infty$. Item $(iii)$ then follows directly from Theorem \ref{th.nonincreasing}$(iii)$.
\end{proof}
\subsection{A general result of convergence relying on the study of the distance to the optimal set $S_\infty$}
As in the previous subsection, $x(\cdot)$ denotes a strong global solution of the evolution inclusion \eqref{eq.NAGI}. We now study the distance 
of the solution $x(t)$ to the optimal set $S_\infty$, and we show that it vanishes as $t\to +\infty$. This is in fact an extension of a result due to Baillon-Cominetti \cite{BaiCom} in a finite dimensional framework. To obtain such an extension in a general Hilbert space, one has to assume some inf-compactness property on the functions $\phi_t$.
Let us recall that a function $f:H\to \R\cup\{-\infty,+\infty\}$ is said to be inf-compact if, for every $l\in \R$ the lower level set
$\{x\in H: \, f(x)\leq l\}$ is relatively compact in  $H$.
A weaker notion consists in requiring that the function $f+\delta_{\overline{B}(0,R)}$ is inf-compact\footnote{We use here the convention $(-\infty)+(+\infty)=+\infty$.} for every $R>0$. Here $\overline{B}(0,R)$ denotes the closed ball of radius $R$ centered at $0$. This condition amounts to assuming that for every $R>0$ and $l\in \R$ the lower level set
\begin{equation}\label{eq.inf-compact}
\{x\in H: \ \|x\|\leq R, \, f(x)\leq l\}\, \mbox{ is relatively compact in } H.
\end{equation}
If $H$ is finite-dimensional, the ball $\{x\in H: \ \|x\|\leq R \}$ is compact, and the inf-compactness property above is satisfied for every function $f:H\to \R\cup\{-\infty,+\infty\}$.

\begin{thrm}\label{th.NAGI}
Let $\{\phi_t; \, t\geq 0\}$ be a family of closed convex functions from $H$ to $\rinf$. Assume that
\footnote{For the convenience of the reader, and coherence with the literature, we keep the name of the assumptions $(\rm H 1)$-$(\rm H 2)$-$(\rm H 3)$ as in \cite{BaiCom}.
}
\begin{itemize}
\item[$(\rm H 1)$] There exists a closed proper convex function $\phi_\infty:H\to \rinf$ such that the set $S_\infty=\argmin \phi_\infty$ is nonempty and bounded.
\item[$(\rm H 2)$] $\phi_\infty(x_\infty)\leq \liminf_{k\to +\infty}\phi_{t_k}(x_k)$ for all convergent sequences $x_k\to x_\infty$ and $t_k\to +\infty$.
\item[$(\rm H 3)$] $\lim_{t\to +\infty} v_\infty (t)=\min_H \phi_\infty$, where $v_\infty(t)=\sup_{z\in S_\infty} \phi_t(z)$.
\item[$(\rm H 4)$] For $t$ large enough, all functions $\phi_t$ are uniformly minorized by a function $f:H\to\R\cup\{-\infty,+\infty\}$ satisfying\footnote{If $H=\R^n$, assumption $(\rm H 4)$ is automatically satisfied (take $f\equiv -\infty$).} the inf-compactness property (\ref{eq.inf-compact}).
\end{itemize}
Let $x(.)$ be a strong global solution of \eqref{eq.NAGI}. Then we have 
\begin{itemize}
\item[$(i)$] $\lim_{t\to +\infty} d(x(t), S_\infty)=0$.
\item[$(ii)$] If we assume moreover that
\begin{equation}\tag{\ref{eq.cond_L1}}
\forall z\in S_\infty, \quad \int_0^{+\infty}G_{\partial \phi_t}(z,0)\, dt<+\infty,
\end{equation}
then there exists $x_\infty\in S_\infty$ such that $x(t)\rightharpoonup x_\infty$ weakly in $H$ as $t\to +\infty$.
\end{itemize}
\end{thrm}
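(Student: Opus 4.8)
The plan is to prove (i) by a Lyapunov argument built on the squared distance to $S_\infty$, with (H4) playing the role that local compactness plays in the finite–dimensional theorem of Baillon--Cominetti, and then to deduce (ii) from (i) via the Opial lemma.

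\emph{Part (i).} Set $c=\min_H\phi_\infty$ (finite, since $S_\infty=\argmin\phi_\infty\neq\emptyset$ by (H1)) and $\theta(t)=\demi\, d(x(t),S_\infty)^2$. The map $x\mapsto\demi\, d(x,S_\infty)^2$ is convex and Fr\'echet differentiable with $1$-Lipschitz gradient $x\mapsto x-P_{S_\infty}x$ (note that $S_\infty$, being a sublevel set of the closed convex function $\phi_\infty$, is closed and convex), so $\theta$ is locally absolutely continuous and $\dot\theta(t)=\langle x(t)-P_{S_\infty}x(t),\dot x(t)\rangle$ for a.e. $t$. Combining this with $-\dot x(t)\in\partial\phi_t(x(t))$, the subgradient inequality at the point $z=P_{S_\infty}x(t)\in S_\infty$, and the definition of $v_\infty$ in (H3), I obtain the basic estimate
$$\dot\theta(t)\ \le\ \phi_t\big(P_{S_\infty}x(t)\big)-\phi_t(x(t))\ \le\ v_\infty(t)-\phi_t(x(t))\qquad\text{a.e. on }\R_+.$$
The crux is the following strict value gap: for every $\eps>0$ there are $\eta>0$ and $T$ such that $d(x(t),S_\infty)\ge\eps$ and $t\ge T$ imply $\phi_t(x(t))\ge c+\eta$. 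I would prove it by contradiction: otherwise there are $t_k\to+\infty$ with $d(x(t_k),S_\infty)\ge\eps$ and $\phi_{t_k}(x(t_k))\to c$; one first checks that $\{x(t_k)\}$ is relatively compact — this is exactly where (H4) is used, via convexity of $\phi_{t_k}$ along the segment from $P_{S_\infty}x(t_k)$ to $x(t_k)$, on which $\phi_{t_k}\le\max(v_\infty(t_k),\phi_{t_k}(x(t_k)))$, together with boundedness of $S_\infty$ from (H1), the basic estimate above keeping the trajectory from escaping to infinity — and then, extracting $x(t_{k_j})\to\bar x$ strongly and invoking (H2), $\phi_\infty(\bar x)\le\liminf_j\phi_{t_{k_j}}(x(t_{k_j}))=c$, so $\bar x\in S_\infty$, contradicting $d(\bar x,S_\infty)\ge\eps$. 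I expect this relative-compactness step, i.e. essentially the boundedness of the trajectory, to be the main obstacle: in finite dimensions it is automatic (and then (H4) is vacuous), while in the Hilbert setting (H4) is precisely what makes it work.

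Granting the gap, fix $\eps>0$, take $\eta$ and $T$ as above, and, shrinking $\eta$ and enlarging $T$ if necessary, also arrange by (H3) that $v_\infty(t)\le c+\eta/2$ for $t\ge T$. Then a.e. on $\{t\ge T:\,d(x(t),S_\infty)\ge\eps\}$ the basic estimate gives $\dot\theta(t)\le-\eta/2<0$. A standard interval argument concludes: it cannot happen that $d(x(\cdot),S_\infty)\ge\eps$ on all of $[T,+\infty)$ (this would force $\theta(t)\to-\infty$), so there is $T'\ge T$ with $d(x(T'),S_\infty)\le\eps$; and if $d(x(t_0),S_\infty)>\eps$ for some $t_0>T'$, putting $a=\sup\{t\in[T',t_0]:\,d(x(t),S_\infty)\le\eps\}$ one has $a<t_0$, $d(x(a),S_\infty)=\eps$ and $d(x(\cdot),S_\infty)>\eps$ on $(a,t_0]$, whence $\theta(t_0)\le\theta(a)-\tfrac{\eta}{2}(t_0-a)<\eps^2/2$, contradicting $d(x(t_0),S_\infty)>\eps$. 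Thus $d(x(t),S_\infty)\le\eps$ for $t\ge T'$, and since $\eps>0$ is arbitrary, $d(x(t),S_\infty)\to0$.

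\emph{Part (ii).} Apply the Opial lemma (Lemma~\ref{lm.Opial}) with $S=S_\infty$. For $z\in S_\infty$ put $h_z(t)=\demi\|x(t)-z\|^2$; since $-\dot x(t)\in\partial\phi_t(x(t))$, evaluating the definition of $G_{\partial\phi_t}$ on the pair $(x(t),-\dot x(t))\in\gph\partial\phi_t$ gives $\dot h_z(t)=\langle x(t)-z,\dot x(t)\rangle\le G_{\partial\phi_t}(z,0)$, so by \eqref{eq.cond_L1} the positive part $(\dot h_z)_+$ is integrable; as $h_z\ge0$ this forces $\dot h_z\in L^1(0,+\infty)$, hence $\lim_{t\to+\infty}\|x(t)-z\|$ exists, which is hypothesis $(i)$ of Opial. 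For hypothesis $(ii)$, if $t_n\to+\infty$ and $x(t_n)\rightharpoonup x_\infty$, then by part (i) there are $z_n\in S_\infty$ with $\|x(t_n)-z_n\|\to0$, hence $z_n\rightharpoonup x_\infty$; as $S_\infty$ is convex and closed, it is weakly closed, so $x_\infty\in S_\infty$. Opial's lemma then yields $x(t)\rightharpoonup x_\infty$ for some $x_\infty\in S_\infty$.
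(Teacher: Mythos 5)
Your Part (ii) is correct and coincides with the paper's own argument (Opial's lemma, the estimate $\dot h_z(t)\le G_{\partial \phi_t}(z,0)$ combined with \eqref{eq.cond_L1}, and Part (i) to place weak cluster points in $S_\infty$). For Part (i) the paper only invokes Baillon--Cominetti and asserts that $(\rm H 4)$ allows an ``immediate adaptation''; your attempt to write the Lyapunov argument out in full follows the right scheme, but it has a genuine gap exactly at the step you yourself flag: the relative compactness of $\{x(t_k)\}$, i.e.\ essentially the boundedness of the trajectory. The justification you offer --- ``the basic estimate above keeping the trajectory from escaping to infinity'' --- is circular: the inequality $\dot\theta(t)\le v_\infty(t)-\phi_t(x(t))$ produces decrease of $\theta$ only after one knows the value gap $\phi_t(x(t))\ge c+\eta$ outside the $\eps$-neighbourhood, which is precisely what you are trying to prove; before that there is no a priori lower bound on $\phi_t(x(t))$ (the minorant $f$ of $(\rm H 4)$ may take the value $-\infty$), hence no control on $\theta$. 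Nor can $(\rm H 4)$ by itself bound the trajectory: inf-compactness in the sense of (\ref{eq.inf-compact}) only concerns sets $\{\|x\|\le R,\ f(x)\le l\}$, so it can be exploited only for points already known to be bounded.

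The repair --- presumably what the paper's ``immediate adaptation'' amounts to --- is to avoid compactifying the trajectory altogether. Put $u_k=P_{S_\infty}x(t_k)$, $d_k=d(x(t_k),S_\infty)\ge\eps$, and $y_k=u_k+\frac{\eps}{d_k}\,(x(t_k)-u_k)$. Then $P_{S_\infty}(y_k)=u_k$, so $d(y_k,S_\infty)=\eps$, and $(y_k)$ is bounded because $S_\infty$ is bounded; by convexity, $\phi_{t_k}(y_k)\le\max\bigl(v_\infty(t_k),\phi_{t_k}(x(t_k))\bigr)$, so along a sequence negating the gap one gets $\limsup_k\phi_{t_k}(y_k)\le c$ by $(\rm H 3)$. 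Now $(\rm H 4)$ applies to $(y_k)$ (bounded, with $f(y_k)\le c+1$ eventually), a strongly convergent subsequence exists, and $(\rm H 2)$ forces its limit into $S_\infty$ although it lies at distance $\eps$ from $S_\infty$: contradiction. This gives the gap at the collar points $y_t$, and it is transferred to the trajectory by the reverse convexity inequality $\phi_t(x(t))\ge\frac{d_t}{\eps}\,\phi_t(y_t)-\frac{d_t-\eps}{\eps}\,\phi_t(u_t)\ge c+\eta$ as soon as $v_\infty(t)\le c+\eta/2$; from there your differential inequality $\dot\theta\le-\eta/2$ and your concluding interval argument go through verbatim (and boundedness of the trajectory follows a posteriori). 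In short: correct Part (ii), correct overall strategy for Part (i), but the compactness step as written would fail and must be rerouted through the bounded segment points rather than through $x(t_k)$ itself.
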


\noindent Recall that Assumption \eqref{eq.cond_L1} is satisfied under the  stronger condition 
\begin{equation}\tag{\ref{eq.cond_L1_bis}}
\forall z\in S_\infty, \quad \int_0^{+\infty}\left[\phi_t(z)-\inf_H\phi_t\right]\, dt<+\infty,
\end{equation}
see Remark \ref{rk.L1_vs_L1bis}.

\begin{proof}
$(i)$ In a finite dimensional space, Baillon \& Cominetti proved that 
$$\lim_{t\to +\infty} d(x(t), S_\infty)=~0$$
 under $(\rm H 1)$-$(\rm H 2)$-$(\rm H 3)$, see \cite[Theorem 2.1]{BaiCom}. An immediate adaptation of their arguments shows that this property still holds true
in a Hilbert space, under the additional assumption $(\rm H 4)$. \\
$(ii)$ The proof of the weak convergence $x(t)\rightharpoonup x_\infty$ is based on the Opial lemma. To show that $\lim_{t\to +\infty} \|x(t)-z\|$ exists for every $z\in S_\infty$, we use the map $h$ defined by $h(t)=\demi \|x(t)-z\|^2$ and we proceed as in the proof of Theorem \ref{th.nonincreasing}$(iii)$.
 The second point consists in proving that every weak limit point of $x(.)$ belongs to $S_\infty$. In fact, this is an immediate consequence of $(i)$ and of the weak lower semicontinuity of the convex continuous function $d(.,S_\infty)$.
\end{proof}

\subsection{Coupled gradients with multiscale aspects}
Let us now consider the case $\phi_t=\Phi+\beta(t)\Psi$, where the functions $\Phi$, $\Psi : H \to \R\cup\{+\infty\}$ are closed convex and the parameter $\beta(t)$ tends to $+\infty$.
The trajectories of the multiscale gradient dynamics\,
\begin{equation}\tag{\ref{MAG}} 
 \dot x(t)+\partial \Phi(x(t))+\beta(t)\, \partial \Psi(x(t))\ni 0
 \end{equation}
 tend to minimize the function $\Phi$ over the set $C=\argmin \Psi$. 
If the parameter $\beta(t)$ tends {\it rather fast} to $+\infty$, then any trajectory weakly converges to a point of $\argmin_C \Phi$. This is the subject of the next statement, for which we define, following \cite{Cab}, the map $\omega:\R_+\to \R\cup\{-\infty\}$:  for every $\eps\geq 0$,
\begin{equation}\label{eq.def_omega}
\omega(\eps)=\inf_H(\Psi+\eps \Phi).
\end{equation}
Corollary \ref{co.viscosity} below shows that the map $\omega$ plays a crucial role in the asymptotic study of the dynamical system (\ref{MAG}). A detailed study of the map $\omega$ will be carried out in Section \ref{se.omega}.

\begin{crllr}\label{co.viscosity} Assume
\begin{itemize}
\item[$(\cH_\Psi)$ \,]  $\Psi : H \to \R\cup\{+\infty\}$ is a closed convex proper function such that  $\inf_H \Psi =0$, and $C=~\argmin \Psi\neq~\emptyset$.
\item[$(\cH_\Phi)$ \,]  $\Phi : H \to \R\cup\{+\infty\}$ is a closed convex proper function such that  $\inf_C \Phi=0$, and $S=~\argmin_C \Phi\neq~\emptyset$.
\end{itemize}
Assume that the set $\argmin_C \Phi$ is bounded, and that the function $\Psi +\Phi$ satisfies the inf-compactness property (\ref{eq.inf-compact}). Let $\beta:\R_+\to \R_+$ be a map\footnote{Note that we do not assume any monotonicity property for the map  $\beta$.} such that $\lim_{t\to+\infty}\beta(t)=~+\infty$. 
Let $x(.)$ be a strong global solution of \eqref{MAG}.
Then we have
\begin{itemize}
\item[$(i)$] $\lim_{t\to +\infty} d(x(t), \argmin_C\Phi)=0$. In particular, if the set $\argmin_C\Phi$ is a singleton $\{\xbar\}$ for some $\xbar\in H$, then $x(t)\to \xbar$ strongly in $H$ as $t\to +\infty$.
\end{itemize}
Additionally assume that 
\begin{mycondition}\label{eq.cond_beta}
\int_0^{+\infty}\beta(t)\left|\omega (1/\beta(t))\right|\, dt<+\infty,
\end{mycondition}
Then 
\begin{itemize}
\item[$(ii)$] there exists $x_\infty\in \argmin_C \Phi$ such that  $w- \lim_{t\to +\infty}x(t)= x_\infty$.
\end{itemize}
\end{crllr}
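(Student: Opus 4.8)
The plan is to obtain the corollary as an application of Theorem~\ref{th.NAGI} to the family $\phi_t=\Phi+\beta(t)\Psi$ and to the limit function $\phi_\infty=\Phi+\delta_C$. First I would note that every strong global solution $x(\cdot)$ of \eqref{MAG} is a strong global solution of \eqref{eq.NAGI} for this $\phi_t$: since the inclusion $\partial\Phi(x(t))+\beta(t)\partial\Psi(x(t))\subset\partial\big(\Phi+\beta(t)\Psi\big)(x(t))$ always holds, no qualification condition is needed. Each $\phi_t$ is closed convex proper (its domain contains $S=\argmin_C\Phi$, which is nonempty by $(\cH_\Phi)$), and $\phi_\infty=\Phi+\delta_C$ is closed convex proper (here $C=\argmin\Psi$ is closed as a sublevel set of the l.s.c. function $\Psi$) with $\argmin\phi_\infty=\argmin_C\Phi=S$ and $\min_H\phi_\infty=\inf_C\Phi=0$. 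So in Theorem~\ref{th.NAGI} we shall have $S_\infty=S$.

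The bulk of the proof is the verification of $(\rm H1)$--$(\rm H4)$. Hypothesis $(\rm H1)$ is exactly the standing assumption that $S=\argmin_C\Phi$ is nonempty (from $(\cH_\Phi)$) and bounded. For $(\rm H3)$: since $S\subset C=\argmin\Psi$ and $\inf_H\Psi=0$, we have $\Psi\equiv0$ on $S$, while $\Phi\equiv0$ on $S=\argmin_C\Phi$ by $(\cH_\Phi)$; hence $\phi_t(z)=0$ for every $z\in S$ and every $t\geq0$, so $v_\infty(t)\equiv0=\min_H\phi_\infty$. For $(\rm H4)$: since $\beta(t)\to+\infty$, one has $\beta(t)\geq1$ for $t$ large, whence $\phi_t=\Phi+\beta(t)\Psi\geq\Phi+\Psi$ because $\Psi\geq0$; and $\Phi+\Psi$ satisfies the inf-compactness property~\eqref{eq.inf-compact} by assumption, so one may take $f=\Phi+\Psi$.

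The step needing a little care is $(\rm H2)$: given $x_k\to x_\infty$ and $t_k\to+\infty$, show $\phi_\infty(x_\infty)\leq\liminf_k\phi_{t_k}(x_k)$. If $x_\infty\in C$, then $\phi_\infty(x_\infty)=\Phi(x_\infty)$, and since $\Psi\geq0$ we have $\phi_{t_k}(x_k)\geq\Phi(x_k)$, so the claim follows from the lower semicontinuity of $\Phi$. If $x_\infty\notin C$, then $\Psi(x_\infty)>0$ (as $\inf_H\Psi=0$ is attained exactly on $C$), so by lower semicontinuity $\Psi(x_k)\geq\tfrac12\Psi(x_\infty)>0$ for $k$ large (with the convention that this reads $\Psi(x_k)\to+\infty$ when $\Psi(x_\infty)=+\infty$); combining this with $\beta(t_k)\to+\infty$ and the fact that $\Phi$, being closed convex proper, admits a continuous affine minorant (so $\Phi(x_k)$ is bounded below along the convergent sequence $x_k$), we get $\phi_{t_k}(x_k)\to+\infty=\phi_\infty(x_\infty)$. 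Thus $(\rm H2)$ holds, and Theorem~\ref{th.NAGI}$(i)$ gives $\lim_{t\to+\infty}d(x(t),\argmin_C\Phi)=0$; when $\argmin_C\Phi=\{\xbar\}$ this reads $\|x(t)-\xbar\|\to0$, proving $(i)$.

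For $(ii)$ I would invoke Theorem~\ref{th.NAGI}$(ii)$, so it remains to verify Condition~\eqref{eq.cond_L1}; by Remark~\ref{rk.L1_vs_L1bis} it suffices to verify the stronger Condition~\eqref{eq.cond_L1_bis}. Fix $z\in S$. As observed, $\phi_t(z)=0$. For $t$ large (so $\beta(t)>0$), the rescaling identity gives $\inf_H\phi_t=\inf_H\big(\Phi+\beta(t)\Psi\big)=\beta(t)\inf_H\big(\Psi+\tfrac1{\beta(t)}\Phi\big)=\beta(t)\,\omega(1/\beta(t))$ by~\eqref{eq.def_omega}, and $\omega(1/\beta(t))\leq\Psi(z)+\tfrac1{\beta(t)}\Phi(z)=0$; hence $\phi_t(z)-\inf_H\phi_t=-\beta(t)\,\omega(1/\beta(t))=\beta(t)\,|\omega(1/\beta(t))|\geq0$ (on the bounded initial interval where $\beta$ may vanish, $\phi_t(z)-\inf_H\phi_t$ is a locally integrable finite quantity, irrelevant for summability). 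Therefore Condition~\eqref{eq.cond_beta} is precisely Condition~\eqref{eq.cond_L1_bis} for the family $\phi_t$, and Theorem~\ref{th.NAGI}$(ii)$ yields $x_\infty\in S=\argmin_C\Phi$ with $x(t)\rightharpoonup x_\infty$. The only real obstacles are the bookkeeping in $(\rm H2)$ and the scaling identity $\inf_H(\Phi+\beta(t)\Psi)=\beta(t)\,\omega(1/\beta(t))$; the hard analytic content (the distance estimate, via the Baillon--Cominetti technique) is already packaged in Theorem~\ref{th.NAGI}.
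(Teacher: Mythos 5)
Your proposal is correct and follows essentially the same route as the paper: apply Theorem~\ref{th.NAGI} with $\phi_t=\Phi+\beta(t)\Psi$ and $\phi_\infty=\Phi+\delta_C$, check $(\rm H1)$--$(\rm H4)$, and observe that for $z\in S$ one has $\phi_t(z)-\inf_H\phi_t=\beta(t)\,|\omega(1/\beta(t))|$, so that Condition~\eqref{eq.cond_beta} is exactly Condition~\eqref{eq.cond_L1_bis}. The only cosmetic difference is in $(\rm H2)$, where you split into the cases $x_\infty\in C$ and $x_\infty\notin C$ while the paper bounds $\beta(t_k)\geq m$ and lets $m\to+\infty$; both arguments are valid, and your extra remarks (the subdifferential sum inclusion guaranteeing that solutions of \eqref{MAG} solve \eqref{eq.NAGI}, closedness of $\phi_\infty$) are harmless additions.
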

\begin{proof} Let us check that the hypotheses of Theorem \ref{th.NAGI} are satisfied for $\phi_t=\Phi+\beta(t)\Psi$. Assumption $(\rm H 1)$ is satisfied with $\phi_\infty=\Phi+\delta_C$ and $S_\infty=\argmin_C\Phi$. Now let $(x_k)\subset H$ and $(t_k)\subset \R_+$ be such that $x_k\to x_\infty$ and $t_k\to +\infty$ as $k\to +\infty$.
Let us fix $m>0$. Since $\lim_{k\to +\infty} \beta(t_k)=+\infty$, we have $\beta(t_k)\geq m$ for $k$ large enough and hence
$$\liminf_{k\to +\infty}(\Phi(x_k)+\beta(t_k)\Psi(x_k))\geq \liminf_{k\to +\infty}(\Phi(x_k)+m\Psi(x_k)).$$
Recalling that $x_k\to x_\infty$, and that the functions $\Phi$ and $\Psi$ are closed, we deduce that
$$\liminf_{k\to +\infty}(\Phi(x_k)+\beta(t_k)\Psi(x_k))\geq \Phi(x_\infty)+m\Psi(x_\infty).$$
Letting $m\to +\infty$, we infer that
$$\liminf_{k\to +\infty}(\Phi(x_k)+\beta(t_k)\Psi(x_k))\geq \Phi(x_\infty)+\delta_C(x_\infty),$$
hence $(\rm H 2)$ is fulfilled. For every $z\in S_\infty=\argmin_C\Phi$, we have $\phi_t(z)=0$,
therefore $v_\infty(t)=0$ for every $t\geq 0$, and $(\rm H 3)$ is trivially satisfied. Since $\beta(t)\to +\infty$ we have
$\Phi+\Psi\leq \Phi+\beta(t)\Psi=\phi_t$ for $t$ large enough, and assumption~$(\rm H 4)$ is satisfied with $f=\Phi+\Psi$.
Now observe that for every $t\geq 0$ and $z\in S_\infty$,
\begin{eqnarray*}
\phi_t(z)-\inf_H \phi_t&=&-\inf_H (\Phi+\beta(t)\Psi)\quad \mbox{ since } \Phi(z)=\Psi(z)=0\\
&=&-\beta(t)\, \omega(1/\beta(t))\quad \mbox{ by definition of the map } \omega\\
&=&\beta(t)\, |\omega(1/\beta(t))|\quad \mbox{ because } \omega\leq 0.
\end{eqnarray*}
In view of Condition (\ref{eq.cond_beta}), Condition \eqref{eq.cond_L1_bis} is clearly satisfied, thus implying \eqref{eq.cond_L1}.
Conclusions $(i)$-$(ii)$ then follow from Theorem \ref{th.NAGI}.
\end{proof}

\begin{rmrk}\label{rk.strong_cv} In the context of the previous theorem, one can easily show that\footnote{Equality (\ref{eq.Psi_to0}) is a basic result which requires neither Condition (\ref{eq.cond_beta}) nor the inf-compactness of $\Phi+\Psi$.}
\begin{equation}\label{eq.Psi_to0}
\lim_{t\to +\infty} \Psi(x(t))=0,
\end{equation}
see for example \cite[Lemma 3.3]{AttCza}. Hence there exists $t_0\geq 0$ such that $\Psi(x(t))\leq 1$ for every $t\geq t_0$. Since the trajectory $x(.)$ is bounded, there exists $R>0$ such that $\|x(t)\|\leq R$ for every $t\geq 0$.
If $\Psi$ satisfies the inf-compactness property (\ref{eq.inf-compact}), we deduce that the set $\{x(t), \, t\geq t_0\}$ is relatively compact for the strong topology of $H$. Recalling from Corollary \ref{co.viscosity} $(ii)$ that the
 trajectory $x(.)$ weakly converges to~$x_\infty$, we immediately deduce that it  converges strongly to $x_\infty$.
\end{rmrk}

\begin{rmrk} Assume that the function $\Psi$ satisfies the following quadratic conditioning property 
$$\Psi\geq a\,d^2(\cdot,C) \quad \mbox{ for some $a>0$}.$$ 
Under this condition, there exists $c>0$ such that $|\omega(\eps)|\leq c\, \eps^2$ for every $\eps\geq 0$, see Section \ref{se.omega}. Hence, in this case, assumption (\ref{eq.cond_beta}) is fulfilled if $\int_0^{+\infty}(1/\beta(t))\, dt <~+\infty.$
\end{rmrk}

\begin{crllr}\label{co.inter_nonvide} Under Hypotheses $(\cH_\Psi)$-$(\cH_\Phi)$, assume that the set $S=\argmin \Psi\cap \argmin \Phi$ is nonempty and bounded. Suppose that the function $\Psi +\Phi$ satisfies the inf-compactness property (\ref{eq.inf-compact}). Let $\beta:\R_+\to \R_+$ be a map  that satisfies $\lim_{t\to+\infty}\beta(t)=~+\infty$. 
Let $x(.)$ be a strong global solution of 
\begin{equation}\tag{\ref{MAG}} 
 \dot x(t)+\partial \Phi(x(t))+\beta(t)\, \partial \Psi(x(t))\ni 0.
 \end{equation}
Then there exists $x_\infty\in S$ such that $x(t)\rightharpoonup x_\infty$ weakly in $H$ as $t\to +\infty$.
\end{crllr}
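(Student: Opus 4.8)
The statement is Corollary \ref{co.inter_nonvide}, which concerns the case where $\argmin\Psi\cap\argmin\Phi\neq\emptyset$. The natural strategy is to reduce it to Corollary \ref{co.viscosity} by verifying that Condition \eqref{eq.cond_beta} holds automatically in this situation. First I would observe that Hypotheses $(\cH_\Psi)$-$(\cH_\Phi)$ are in force, so $\inf_H\Psi=0$, $\inf_C\Phi=0$ with $C=\argmin\Psi$, and $S=\argmin_C\Phi\supset\argmin\Psi\cap\argmin\Phi$; moreover the assumption directly gives that $\argmin_C\Phi\supset S$ is nonempty and bounded, and that $\Psi+\Phi$ satisfies the inf-compactness property \eqref{eq.inf-compact}. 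Thus item $(i)$ of Corollary \ref{co.viscosity} applies verbatim, giving $\lim_{t\to+\infty}d(x(t),\argmin_C\Phi)=0$. The point that needs work is item $(ii)$: to conclude weak convergence to a point of $S=\argmin\Psi\cap\argmin\Phi$ rather than merely to a point of $\argmin_C\Phi$.

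The key computation is that when $\argmin\Psi\cap\argmin\Phi\neq\emptyset$, the map $\omega$ vanishes identically on $\R_+$, so that \eqref{eq.cond_beta} is trivially satisfied (the integrand is $0$). Indeed, pick any $\bar z\in\argmin\Psi\cap\argmin\Phi$; then $\Psi(\bar z)=\inf_H\Psi=0$ and $\Phi(\bar z)=\inf_H\Phi$. Since $\bar z\in C$ and $\inf_C\Phi=0$ by $(\cH_\Phi)$, we get $\inf_H\Phi\le\Phi(\bar z)=\inf_C\Phi=0$, hence $\Phi(\bar z)=0$ and moreover $\inf_H\Phi=0$. Therefore, for every $\eps\ge 0$,
$$0\le\omega(\eps)+\,(\text{something})\qquad\text{is not the right framing; rather}\qquad \omega(\eps)=\inf_H(\Psi+\eps\Phi)\le\Psi(\bar z)+\eps\Phi(\bar z)=0,$$
while $\Psi+\eps\Phi\ge 0$ pointwise gives $\omega(\eps)\ge 0$. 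Hence $\omega(\eps)=0$ for all $\eps\ge 0$, so $\int_0^{+\infty}\beta(t)\,|\omega(1/\beta(t))|\,dt=0<+\infty$, i.e. \eqref{eq.cond_beta} holds.

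With \eqref{eq.cond_beta} verified, Corollary \ref{co.viscosity}$(ii)$ yields $x_\infty\in\argmin_C\Phi$ with $x(t)\rightharpoonup x_\infty$ weakly. It then remains to upgrade the membership $x_\infty\in\argmin_C\Phi$ to $x_\infty\in S=\argmin\Psi\cap\argmin\Phi$. But as just noted $\inf_H\Phi=0$ and $\argmin_C\Phi$ consists of points $z\in C=\argmin\Psi$ with $\Phi(z)=0=\inf_H\Phi$; hence $\argmin_C\Phi=\argmin\Psi\cap\argmin\Phi=S$ in this setting, and so $x_\infty\in S$. Thus the conclusion follows. I would also remark that $(i)$ of Corollary \ref{co.viscosity} identifies the limit set, giving strong convergence when $S$ is a singleton, although the statement as phrased only asks for weak convergence.

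The main obstacle — more a bookkeeping point than a genuine difficulty — is making sure the normalizations in $(\cH_\Psi)$-$(\cH_\Phi)$ are consistent with the hypothesis $\argmin\Psi\cap\argmin\Phi\neq\emptyset$: one must check that this intersection hypothesis forces $\inf_H\Phi=0$ (not merely $\inf_C\Phi=0$), which is exactly what makes $\omega\equiv 0$ and collapses $\argmin_C\Phi$ onto $S$. Once this elementary observation is in place, the result is an immediate specialization of Corollary \ref{co.viscosity}, with no new analytic input required.
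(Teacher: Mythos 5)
Your proposal is correct and follows essentially the same route as the paper: observe that any $\bar z\in\argmin\Psi\cap\argmin\Phi$ gives $\Psi(\bar z)=\Phi(\bar z)=0$ and $\Psi+\eps\Phi\geq 0$, so $\omega\equiv 0$, Condition (\ref{eq.cond_beta}) holds trivially, and Corollary \ref{co.viscosity} applies. Your additional explicit check that $\inf_H\Phi=0$ forces $\argmin_C\Phi=\argmin\Psi\cap\argmin\Phi$ (so the limit indeed lies in $S$ and the boundedness hypothesis of Corollary \ref{co.viscosity} is met) is left implicit in the paper but is a sound and welcome clarification.
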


\begin{proof}
If $\argmin \Psi\cap \argmin \Phi\neq\emptyset$,
the infimum in the definition of $\omega(\eps)$ is attained at every $x\in \argmin \Psi\cap \argmin \Phi$, and it equals $0$. It ensues that $\omega(\eps)=0$ for every $\eps\geq 0$. Therefore Condition\eqref{eq.cond_beta}
 of Corollary \ref{co.viscosity} is automatically satisfied. 
\end{proof}

As a consequence  of Corollary \ref{co.viscosity}, we recover the convergence result of the trajectories of \eqref{MAG} from \cite{AttCza}. 

\begin{crllr} \cite[Theorem 5.1]{AttCza}\label{co.AttCza}
Let $\Psi$, $\Phi:H\to \rinf$ be functions satisfying Hypotheses $(\cH_\Psi)$-$(\cH_\Phi)$, together with the following qualification condition
\begin{equation}\tag{QC}\label{QC}
\mbox{there exists $x_0\in C$ such that $\Phi$ is continuous at $x_0$.}
\end{equation}
 Assume that the set $S=\argmin_C \Phi$ is bounded, and that the function $\Psi +\Phi$ satisfies the inf-compactness property (\ref{eq.inf-compact}). Let $\beta:\R_+\to \R_+$ be a map such that $\lim_{t\to+\infty}\beta(t)=~+\infty$. 
Assume moreover that
\begin{equation}\tag{\ref{eq.Attouch-Czarnecki}}
 \forall p\in \ran(N_C), \quad \displaystyle \int_{0}^{+\infty} \beta (t) \left[\Psi^* \left(\frac{p}{ \beta (t)}\right) - \sigma_C \left(\frac{p}{ \beta (t)}\right)\right]dt < + \infty.
\end{equation}
Let $x(.)$ be a strong global solution of 
\begin{equation}\tag{\ref{MAG}} 
 \dot x(t)+\partial \Phi(x(t))+\beta(t)\, \partial \Psi(x(t))\ni 0.
 \end{equation}
Then there exists $x_\infty\in S$ such that $x(t)\rightharpoonup x_\infty$ weakly in $H$ as $t\to +\infty$.
\end{crllr}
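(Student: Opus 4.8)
The plan is to derive Corollary \ref{co.AttCza} from Corollary \ref{co.viscosity}, so the whole task reduces to showing that the Attouch--Czarnecki condition \eqref{eq.Attouch-Czarnecki} implies the condition \eqref{eq.cond_beta} on the map $\omega$. All the other hypotheses of Corollary \ref{co.viscosity} ($(\cH_\Psi)$, $(\cH_\Phi)$, boundedness of $S=\argmin_C\Phi$, inf-compactness of $\Psi+\Phi$, $\beta(t)\to+\infty$) are assumed verbatim here, so once the summability link is established the conclusion $x(t)\rightharpoonup x_\infty\in S$ follows at once from Corollary \ref{co.viscosity}$(ii)$.

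The key step is therefore a pointwise estimate: for every $\eps>0$,
\begin{equation*}
|\omega(\eps)|=-\inf_H(\Psi+\eps\Phi)\leq \eps\big[\Psi^*(p/\eps)-\sigma_C(p/\eps)\big]
\end{equation*}
for a suitable $p\in\ran N_C$ (indeed, for \emph{every} $p\in\ran N_C$, after one optimizes). To see this I would use Fenchel--Rockafellar duality on the problem $\inf_H(\Psi+\eps\Phi)$, rewritten via the qualification condition \eqref{QC}. The cleanest route: since $\inf_C\Phi=0$ and $S=\argmin_C\Phi\neq\emptyset$, pick $\xbar\in S$; by \eqref{QC} and the Moreau--Rockafellar sum rule, $0\in\partial(\Phi+\delta_C)(\xbar)=\partial\Phi(\xbar)+N_C(\xbar)$, so there is $p\in N_C(\xbar)\subset\ran N_C$ with $-p\in\partial\Phi(\xbar)$. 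Then for any $x\in H$,
\begin{equation*}
\Phi(x)\geq \Phi(\xbar)+\langle -p,x-\xbar\rangle=-\langle p,x\rangle+\langle p,\xbar\rangle=-\langle p,x\rangle+\sigma_C(p)\cdot 0\ \text{-type term},
\end{equation*}
so more precisely $\Phi(x)\geq \langle -p,x\rangle+\langle p,\xbar\rangle$, and since $\xbar\in C$ with $p\in N_C(\xbar)$ one has $\langle p,\xbar\rangle=\sigma_C(p)$ (Fenchel extremality $\delta_C(\xbar)+\sigma_C(p)=\langle p,\xbar\rangle$, and $\delta_C(\xbar)=0$). Using $\inf_H\Psi=0$ and the definition of $\Psi^*$, for any $x$,
\begin{equation*}
\Psi(x)+\eps\Phi(x)\geq \Psi(x)-\eps\langle p,x\rangle+\eps\sigma_C(p)\geq -\Psi^*(\eps p)+\eps\sigma_C(p).
\end{equation*}
Hence $\omega(\eps)=\inf_H(\Psi+\eps\Phi)\geq -\Psi^*(\eps p)+\eps\sigma_C(p)$, i.e.\ (since $\omega\leq 0$, because plugging any $\xbar\in S$ gives $\Psi+\eps\Phi=0$ there) $|\omega(\eps)|\leq \Psi^*(\eps p)-\eps\sigma_C(p)$. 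Writing $\eps=1/\beta(t)$ and applying positive homogeneity of $\sigma_C$ gives
\begin{equation*}
\beta(t)|\omega(1/\beta(t))|\leq \beta(t)\Big[\Psi^*\big(p/\beta(t)\big)-\tfrac1{\beta(t)}\sigma_C(p)\Big]=\beta(t)\Big[\Psi^*\big(p/\beta(t)\big)-\sigma_C\big(p/\beta(t)\big)\Big],
\end{equation*}
which is exactly the integrand in \eqref{eq.Attouch-Czarnecki} for this particular $p$. Integrating and invoking \eqref{eq.Attouch-Czarnecki} yields \eqref{eq.cond_beta}, and Corollary \ref{co.viscosity} finishes the proof.

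The main obstacle I anticipate is purely bookkeeping around the conjugates and the role of \eqref{QC}: one must be careful that $\Psi^*$ may take the value $+\infty$, that $p$ lies in $\ran N_C$ so that \eqref{eq.Attouch-Czarnecki} indeed applies to it, and that $\eps p$ versus $p/\beta(t)$ scalings match up with $\sigma_C$'s homogeneity. A secondary subtlety is justifying $\omega(\eps)\leq 0$, which is immediate from $(\cH_\Psi)$--$(\cH_\Phi)$ since any point of $S=\argmin_C\Phi\subset C$ gives $\Psi=0$ and $\Phi=0$. Everything else is a direct citation of Corollary \ref{co.viscosity}; no new dynamical or ergodic argument is needed, since the heavy lifting (Opial lemma, attraction to $S_\infty$, weak convergence) was already done there. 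One could alternatively note that this estimate is exactly the content of the displayed inequality $|\omega(\eps)|\leq \Psi^*(\eps p)+\min_H\Psi-\sigma_C(\eps p)$ announced in the introduction and proved in Section \ref{se.omega}, and simply cite it; I would include the short self-contained duality argument above for completeness.
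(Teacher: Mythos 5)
Your proposal is correct and follows essentially the paper's own route: reduce to Corollary \ref{co.viscosity}$(ii)$ by showing that \eqref{eq.Attouch-Czarnecki} implies \eqref{eq.cond_beta}, via the estimate $|\omega(\eps)|\leq \Psi^*(\eps p)-\sigma_C(\eps p)$ for the multiplier $p\in N_C(\xbar)\cap(-\partial\Phi(\xbar))$ produced by \eqref{QC}, which is exactly Proposition \ref{pr.majo}$(c)$--$(d)$ that the paper cites. The only (harmless) difference is that you re-derive this estimate directly from the subdifferential inequality and Fenchel--Young, i.e.\ you inline the weak-duality step of Proposition \ref{pr.majo}$(a)$ instead of citing Theorem \ref{th.EkeTem}.
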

\begin{proof} It relies on the study of the map $\omega$ that we carry out in Section~\ref{se.omega}. Precisely, it is a consequence of the forthcoming Proposition \ref{pr.majo} $(d)$.
\end{proof}

Let us now give an equivalent formulation of Corollary \ref{co.viscosity}, which uses an asymptotic vanishing coefficient $\eps (t)$. The statement is obtained by reversing the roles of the functions $\Phi$ and $\Psi$, and by using a suitable time rescaling, which allows to pass from $\beta(t) \to + \infty$ to $\eps(t) \to 0$, and vice versa.
\begin{crllr}\label{co.viscosity_eps}
Let $\Phi : H \to \R\cup\{+\infty\}$ be a closed convex proper function such that $C=\argmin \Phi\neq \emptyset$  and $\min_H \Phi =0$.
Let $\Psi : H \to \R\cup\{+\infty\}$ be a closed convex proper function such that $\argmin_C \Psi\neq \emptyset$ and $\min_C \Psi=0$.
Assume that the set $\argmin_C \Psi$ is bounded and that the function $\Phi +\Psi$ satisfies the inf-compactness property (\ref{eq.inf-compact}). Let $\eps:\R_+\to \R_+^*$ be a map\footnote{Note that we do not assume the map $\eps$ to be nonincreasing.} such that $\lim_{t\to+\infty}\eps(t)=0$ and $\int_0^{+\infty}\eps(t)\, dt=+\infty$. 
Let $x(.)$ be a strong global solution of 
\begin{equation}\tag{\ref{eq.viscosity}}
\dot x(t)+\partial \Phi(x(t))+\eps(t)\, \partial \Psi(x(t))\ni 0,
\end{equation}
Then we have
\begin{itemize}
\item[$(i)$] $\lim_{t\to +\infty} d(x(t), \argmin_C\Psi)=0$. In particular, if the set $\argmin_C\Psi$ is a singleton $\{\xbar\}$ for some $\xbar\in H$, then $x(t)\to \xbar$ strongly in $H$ as $t\to +\infty$.
\end{itemize}
Additionally assume that 
$$\int_0^{+\infty}\left|\omega (\eps(t))\right|\, dt<+\infty,$$
 where the map $\omega:\R_+\to \R$  is 
defined by $\omega(\eps)=\inf_H(\Phi+\eps\Psi)$.
Then 
\begin{itemize}
\item[$(ii)$] 
there exists $x_\infty\in \argmin_C \Psi$ such that  $w- \lim_{t\to +\infty}x(t)= x_\infty$.
\end{itemize}
\end{crllr}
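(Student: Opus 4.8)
The plan is to deduce Corollary \ref{co.viscosity_eps} from Corollary \ref{co.viscosity} by the time-rescaling argument already used in the proof of Corollary \ref{co.weak_ergo_cv:A+betaB:beta_to_0}, which converts a slowly vanishing coefficient $\eps(t)\to 0$ into a coefficient tending to $+\infty$, after interchanging the roles of $\Phi$ and $\Psi$. First I would divide \eqref{eq.viscosity} by $\eps(t)>0$ to obtain
$$\frac{1}{\eps(t)}\,\dot x(t)+\partial\Psi(x(t))+\frac{1}{\eps(t)}\,\partial\Phi(x(t))\ni 0,$$
and introduce the rescaled time $s=\sigma(t)=\int_0^t\eps(u)\,du$. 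Since $\eps>0$ and $\int_0^{+\infty}\eps(t)\,dt=+\infty$, the map $\sigma$ is an increasing bijection of $\R_+$ onto itself; setting $y(s)=x(\sigma^{-1}(s))$ and $\beta(s)=1/\eps(\sigma^{-1}(s))$, one has $\dot y(s)=\dot x(t)/\eps(t)$ with $t=\sigma^{-1}(s)$, so that $y(\cdot)$ is a strong global solution (local absolute continuity being preserved, exactly as in the proof of Corollary \ref{co.weak_ergo_cv:A+betaB:beta_to_0}) of
$$\dot y(s)+\partial\Psi(y(s))+\beta(s)\,\partial\Phi(y(s))\ni 0,$$
which is of the form \eqref{MAG} with $\Phi$ and $\Psi$ swapped. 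Because $\sigma^{-1}(s)\to+\infty$ as $s\to+\infty$ and $\eps(t)\to 0$, we get $\beta(s)\to+\infty$.

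Next I would check that Corollary \ref{co.viscosity} applies to this system, with $\Phi$ in the role of the fast function (hypothesis $(\cH_\Psi)$) and $\Psi$ in the role of the slow one (hypothesis $(\cH_\Phi)$): indeed $\inf_H\Phi=0$ with $\argmin\Phi=C$, $\inf_C\Psi=0$ with $\argmin_C\Psi\neq\emptyset$ and bounded, $\Psi+\Phi=\Phi+\Psi$ has the inf-compactness property \eqref{eq.inf-compact}, and no monotonicity of $\beta$ is required. Corollary \ref{co.viscosity}$(i)$ then yields $\lim_{s\to+\infty}d(y(s),\argmin_C\Psi)=0$; since $d(x(t),\argmin_C\Psi)=d(y(\sigma(t)),\argmin_C\Psi)$ and $\sigma(t)\to+\infty$, this is exactly statement $(i)$, the singleton case reading simply $\|x(t)-\xbar\|\to 0$.

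For $(ii)$, the crucial observation is that the map $\omega$ attached to the swapped system in Corollary \ref{co.viscosity}, namely $\eps\mapsto\inf_H(\Phi+\eps\Psi)$, is precisely the map $\omega$ of the present statement. Condition \eqref{eq.cond_beta} for $y(\cdot)$ reads $\int_0^{+\infty}\beta(s)\,|\omega(1/\beta(s))|\,ds<+\infty$; under the substitution $s=\sigma(t)$, $ds=\eps(t)\,dt$, $1/\beta(\sigma(t))=\eps(t)$, this becomes $\int_0^{+\infty}|\omega(\eps(t))|\,dt<+\infty$, which is exactly the additional hypothesis of $(ii)$. Hence Corollary \ref{co.viscosity}$(ii)$ gives $y(s)\rightharpoonup y_\infty\in\argmin_C\Psi$, and translating back, $x(t)=y(\sigma(t))\rightharpoonup y_\infty=:x_\infty$ weakly in $H$ as $t\to+\infty$.

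I do not expect a genuine obstacle: the argument is a clean reduction, and the only points requiring some care are the bookkeeping of the change of variables — verifying that $y(\cdot)$ stays locally absolutely continuous and performing the substitution in the integral condition rigorously — together with matching each hypothesis of Corollary \ref{co.viscosity} to the data of the present statement under the interchange of $\Phi$ and $\Psi$. All of this parallels the already established Corollary \ref{co.weak_ergo_cv:A+betaB:beta_to_0}.
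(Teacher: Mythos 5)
Your proposal is correct and coincides with the paper's own proof, which is exactly the reduction to Corollary \ref{co.viscosity} via the time rescaling $s=\sigma(t)=\int_0^t\eps(u)\,du$ already used for Corollary \ref{co.weak_ergo_cv:A+betaB:beta_to_0}, after interchanging the roles of $\Phi$ and $\Psi$. The bookkeeping you carry out (identification of the rescaled dynamics with \eqref{MAG}, matching of the hypotheses, and the substitution turning Condition \eqref{eq.cond_beta} into $\int_0^{+\infty}|\omega(\eps(t))|\,dt<+\infty$) is precisely what the paper leaves implicit.
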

The proof is based on a suitable time rescaling, see the proof of Corollary \ref{co.weak_ergo_cv:A+betaB:beta_to_0}. 
\begin{rmrk} Cominetti-Peypouquet-Sorin\cite{ComPeySor} pay a special attention to the following steepest descent system with vanishing Tikhonov regularization
\begin{equation}\label{eq.grad_Tikhonov}\tag{SD$_\eps$}
\dot x(t)+\partial \Phi(x(t))+\eps(t)\, x(t)\ni 0.
\end{equation}
If $\int_0^{+\infty}\eps(t)\, dt=+\infty$, it is proved in \cite{ComPeySor} that any solution $x(\cdot)$ of
\eqref{eq.grad_Tikhonov} strongly converges as $t\to +\infty$ toward the least-norm minimizer of $\Phi$. With essentially the same arguments, if the function $\Psi$ is strongly convex and if $\int_0^{+\infty}\eps(t)\, dt=+\infty$,
then any solution of \eqref{eq.viscosity}
strongly converges as $t\to +\infty$ toward the unique minimizer of $\Psi$ over the set $C=\argmin \Phi$. This convergence result can be recovered from Corollary \ref{co.viscosity_eps} $(i)$. Notice that in this framework, 
the inf-compactness property required by Corollary \ref{co.viscosity_eps} appears to be superfluous.
\end{rmrk}

\medskip

Let us now consider some other examples of nonautonomous subgradient inclusions. They illustrate the versatility of our approach, and its limits.

%\section{Other examples of nonautonomous subgradient inclusions}\label{se.quasi_aut+sweeping}
\subsection{Quasi-autonomous case}
Let us consider the quasi-autonomous subgradient inclusion $\dot x(t)+\partial \Phi(x(t))\ni f(t)$, where $\Phi:H\to \rinf$ is a closed convex function, and the map $f:\R_+\to H$ tends to $f_\infty\in H$ as $t\to +\infty$. This differential inclusion falls into the setting of Theorem \ref{th.NAGI}, by taking $\phi_t=\Phi-\langle f(t),.\rangle$.
We obtain the following statement.

\begin{crllr}\label{co.quasi-autonomous}
Let $f:\R_+\to H$ be a map such that $\lim_{t\to+\infty}f(t)=f_\infty\in H$.
Let $\Phi:H\to \rinf$ be a closed convex function such that $S=\argmin (\Phi-\langle f_\infty,\cdot\rangle)$ is nonempty\footnote{By writing down the optimality condition for the elements of $S$, we immediately see
that $S=\partial \Phi^{-1}(f_\infty)$. It ensues that the nonvacuity of $S$ is equivalent to the condition $f_\infty\in \ran(\partial \Phi)$.} and bounded. Suppose that the function $\Phi$ satisfies the inf-compactness property (\ref{eq.inf-compact}). 
Let $x(.)$ be a strong global solution of 
\begin{equation}\label{eq.quasi-autonomous}
x(t)+\partial \Phi(x(t))\ni f(t).
\end{equation} 
Then we have
\begin{itemize}
\item[$(i)$] $\lim_{t\to +\infty} d(x(t), S)=0$.
\item[$(ii)$]  Assume moreover that 
\begin{equation}\label{eq.cond_l1_quasi-autonomous_1}
\forall z\in S, \quad \int_0^{+\infty}G_{\partial \Phi}(z,f(t))\, dt<+\infty.
\end{equation}
 Then there exists $x_\infty\in S$ such that $x(t)\rightharpoonup x_\infty$ weakly in $H$ as $t\to +\infty$.
 \end{itemize}
\end{crllr}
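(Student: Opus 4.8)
The plan is to derive both assertions from Theorem~\ref{th.NAGI} applied to the family $\phi_t=\Phi-\langle f(t),\cdot\rangle$, $t\ge 0$. Each $\phi_t$ is closed convex proper (it differs from $\Phi$ by a continuous linear functional, so $\dom\phi_t=\dom\Phi\neq\emptyset$), and the natural limit function is $\phi_\infty=\Phi-\langle f_\infty,\cdot\rangle$, for which $S_\infty:=\argmin\phi_\infty=\partial\Phi^{-1}(f_\infty)=S$. The key elementary fact is that $\partial\phi_t=\partial\Phi-f(t)$, hence, via the change of variables $(y,v)\in\gph\partial\phi_t\Leftrightarrow(y,v+f(t))\in\gph\partial\Phi$ in the supremum defining the Br\'ezis-Haraux function, one gets $G_{\partial\phi_t}(z,0)=G_{\partial\Phi}(z,f(t))$ for every $z\in H$. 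Consequently Condition~\eqref{eq.cond_L1} for the family $(\phi_t)$ is \emph{literally} the hypothesis~\eqref{eq.cond_l1_quasi-autonomous_1}, so once the structural hypotheses of Theorem~\ref{th.NAGI} are checked, part $(ii)$ is immediate.

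Next I would verify $(\rm H 1)$--$(\rm H 4)$ of Theorem~\ref{th.NAGI}. Assumption $(\rm H 1)$ is exactly the hypothesis that $S=S_\infty$ is nonempty and bounded. For $(\rm H 2)$, given $x_k\to x_\infty$ and $t_k\to+\infty$, write $\phi_{t_k}(x_k)=\Phi(x_k)-\langle f(t_k),x_k\rangle$; the linear part converges to $\langle f_\infty,x_\infty\rangle$ since $f(t_k)\to f_\infty$ strongly and $x_k\to x_\infty$, while lower semicontinuity of $\Phi$ gives $\liminf_k\Phi(x_k)\ge\Phi(x_\infty)$, whence $\liminf_k\phi_{t_k}(x_k)\ge\phi_\infty(x_\infty)$. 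For $(\rm H 3)$, every $z\in S$ satisfies $\Phi(z)-\langle f_\infty,z\rangle=\min_H\phi_\infty$, so $\phi_t(z)=\min_H\phi_\infty+\langle f_\infty-f(t),z\rangle$ and therefore $v_\infty(t)=\min_H\phi_\infty+\sigma_S(f_\infty-f(t))$, where $\sigma_S$ is the support function of $S$; as $S$ is bounded, $\sigma_S$ is finite and continuous, so $\sigma_S(f_\infty-f(t))\to\sigma_S(0)=0$. For $(\rm H 4)$, choose $M>0$ and $t_0\ge 0$ with $\|f(t)\|\le M$ for $t\ge t_0$ (possible since $f(t)\to f_\infty$); then for $t\ge t_0$ one has $\phi_t\ge\Phi-M\|\cdot\|=:g$, and for every $R>0$ and $l\in\R$ the set $\{x:\|x\|\le R,\ g(x)\le l\}$ is contained in $\{x:\|x\|\le R,\ \Phi(x)\le l+MR\}$, hence relatively compact because $\Phi$ satisfies \eqref{eq.inf-compact}; thus $g$ satisfies \eqref{eq.inf-compact} and $(\rm H 4)$ holds.

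With $(\rm H 1)$--$(\rm H 4)$ in hand, Theorem~\ref{th.NAGI}$(i)$ gives $\lim_{t\to+\infty}d(x(t),S)=0$, which is $(i)$; and under the additional assumption~\eqref{eq.cond_l1_quasi-autonomous_1}, which is precisely \eqref{eq.cond_L1} for $(\phi_t)$ by the identity above, Theorem~\ref{th.NAGI}$(ii)$ produces $x_\infty\in S$ with $x(t)\rightharpoonup x_\infty$, which is $(ii)$. I do not expect a genuine obstacle here, since the proof is a pure verification; the only points needing slight care are $(\rm H 3)$, where the boundedness of $S$ is what lets one write $v_\infty(t)$ as a support-function value and conclude by continuity of $\sigma_S$, and $(\rm H 4)$, where the inf-compactness of the minorant $g=\Phi-M\|\cdot\|$ must be traced back to that of $\Phi$ by intersecting sublevel sets with balls.
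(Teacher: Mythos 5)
Your proposal is correct and follows essentially the same route as the paper: apply Theorem \ref{th.NAGI} to $\phi_t=\Phi-\langle f(t),\cdot\rangle$, verify $(\rm H1)$--$(\rm H4)$ (the paper normalizes $f_\infty=0$ and $\min_H\Phi=0$, which makes your $\sigma_S$ estimate in $(\rm H3)$ reduce to $|v_\infty(t)|\le M\|f(t)\|$ and your minorant to $\Phi-\|\cdot\|$), and use the identity $G_{\partial\phi_t}(z,0)=G_{\partial\Phi}(z,f(t))$ to translate \eqref{eq.cond_l1_quasi-autonomous_1} into \eqref{eq.cond_L1}. No gaps.
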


\begin{rmrk}
 Assumption (\ref{eq.cond_l1_quasi-autonomous_1}) is satisfied under the following stronger condition
 \begin{equation}\label{eq.cond_l1_quasi-autonomous}
\forall z\in S, \quad \int_0^{+\infty}\left[\Phi^*(f(t))+\Phi(z)-\langle f(t), z\rangle\right]\, dt<+\infty.
\end{equation}
Indeed, it suffices to observe that
$G_{\partial \Phi}(z,f(t))\leq\Phi^*(f(t))+\Phi(z)-\langle f(t), z\rangle$.
\end{rmrk}

\begin{proof} Without loss of generality, we assume that $f_\infty=0$ and that $\min_H \Phi=0$.
Let us check that the hypotheses of Theorem \ref{th.NAGI} are satisfied for $\phi_t=\Phi-\langle f(t),.\rangle$. 
Assumption $(\rm H 1)$ is satisfied with $\phi_\infty=\Phi$. Let $(t_k)\subset \R_+$ and $(x_k)\subset H$ be sequences such that $t_k\to +\infty$ and $x_k\to x_\infty$ as $k\to +\infty$, for some $x_\infty\in H$. Observe that
\begin{eqnarray*}
\liminf_{k\to +\infty}\phi_{t_k}(x_k)&=&\liminf_{k\to +\infty}(\Phi(x_k)-\langle f(t_k),x_k\rangle)\\
&=&\liminf_{k\to +\infty}\Phi(x_k) \quad \mbox{ because $f(t_k)\to 0$ and $x_k\to x_\infty$,}\\
&\geq& \Phi(x_\infty) \quad \mbox{ since $\Phi$ is closed and $x_k\to x_\infty$.}
\end{eqnarray*}
It ensues that $(\rm H 2)$ is fulfilled. From the definition of $v_\infty(t)$, we have
\begin{eqnarray*}
v_\infty(t)=\sup_{z\in S}\phi_t(z)&=&\sup_{z\in S}(\Phi(z)-\langle f(t),z\rangle)\\
&=&\sup_{z\in S}(-\langle f(t),z\rangle),
\end{eqnarray*}
because $\Phi(z)=0$ for every $z\in S=\argmin \Phi$.
 Since the set $S$ is bounded, there exists $M>0$ such that $\|z\|\leq M$ for every $z\in S$. It follows immediately that $|v_\infty(t)|\leq M\, \|f(t)\|\to 0$ as $t\to +\infty$, which shows that $(\rm H 3)$ is satisfied. For every $x\in H$ and $t$ large enough, we have
 \begin{eqnarray*}
 \phi_t(x)=\Phi(x)-\langle f(t),x\rangle&\geq& \Phi(x)-\|f(t)\|\|x\|\\
 &\geq& \Phi(x)-\|x\| \quad \mbox{ since $f(t)\to 0$ as $t\to +\infty$.}
 \end{eqnarray*}
It suffices to check that the function $x\mapsto \Phi(x)-\|x\|$ satisfies the inf-compactness property (\ref{eq.inf-compact}). For every $R>0$ and $l\in \R$, we have    
$$\{x\in H: \, \|x\|\leq R, \, \Phi(x)-\|x\|\leq l\}\subset \{x\in H: \,\|x\|\leq R, \,\Phi(x)\leq l+R\}.$$
This last set is relatively compact by assumption, hence hypothesis $(\rm H 4)$ is satisfied. Finally observe that $G_{\partial \phi_t}(z,0)=G_{\partial \Phi}(z,f(t))$
for every $z\in S$.
In view of assumption (\ref{eq.cond_l1_quasi-autonomous_1}), Condition \eqref{eq.cond_L1} is clearly verified.
Conclusions $(i)$-$(ii)$ then follow from Theorem \ref{th.NAGI}.
\end{proof}
The next proposition gives sufficient conditions which guarantee that assumptions 
\eqref{eq.cond_l1_quasi-autonomous_1} and \eqref{eq.cond_l1_quasi-autonomous} are satisfied.
\begin{prpstn}\label{pr.quasi-autonomous}
Let $f:\R_+\to H$ be a map such that $\lim_{t\to+\infty}f(t)=f_\infty\in H$. Let $\Phi:H\to \rinf$ be a closed convex function such that the set $S=\argmin \big(\Phi-~\langle f_\infty,\cdot \rangle\big)$ is nonempty and bounded. 
The following hold true
\begin{itemize}
\item[$(i)$] If the function $\Phi-\langle f_\infty,\cdot \rangle$ is coercive and if $\int_0^{+\infty}\|f(t)-f_\infty\|\, dt<+\infty$, then Condition (\ref{eq.cond_l1_quasi-autonomous}) is satisfied.
\item[$(ii)$] Assume that 
\begin{equation}\label{eq.quadratic_conditioning}
\Phi-\langle f_\infty,\cdot \rangle- \min_H\big(\Phi-\langle f_\infty,\cdot \rangle\big)\geq a\, d^2(.,S)
\end{equation} for some $a>0$ and that
\begin{equation}\label{eq.cond_l1_l2}
\int_0^{+\infty}\left\|\Pi_F (f(t)-f_\infty)\right\|\, dt<+\infty \quad \mbox{ and } \quad
\int_0^{+\infty}\left\|\Pi_{F^\perp} (f(t)-f_\infty)\right\|^2\, dt<+\infty,
\end{equation}
where $\Pi_F$ (resp. $\Pi_{F^\perp}$) denotes the orthogonal projection on the linear space $F=\cl[\R_+(S-S)]$ (resp. $F^\perp$).
Then condition (\ref{eq.cond_l1_quasi-autonomous}) is satisfied.
\end{itemize}
\end{prpstn}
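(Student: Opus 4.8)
The plan is to reduce both items to estimating, near the origin, the convex nonnegative function $g(p):=\Phi^{*}(p)+\Phi(z)-\langle p,z\rangle$ attached to a fixed point $z\in S$; note $g(p)\geq \langle p,z\rangle-\Phi(z)+\Phi(z)-\langle p,z\rangle=0$ for all $p$, so $g\geq 0$. First I would normalize exactly as in the proof of Corollary~\ref{co.quasi-autonomous}: replacing $\Phi$ by $\Phi-\langle f_\infty,\cdot\rangle$ (which changes neither $S$ nor the integrand of~\eqref{eq.cond_l1_quasi-autonomous}, since $(\Phi-\langle f_\infty,\cdot\rangle)^{*}(p)=\Phi^{*}(p+f_\infty)$) and then subtracting $\min_H\Phi$, so that we may assume $f_\infty=0$ and $\min_H\Phi=0$. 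Then $S=\argmin\Phi=(\partial\Phi)^{-1}(0)$, hence $\Phi(z)=0$, $\Phi^{*}(0)=0$, and the integrand of~\eqref{eq.cond_l1_quasi-autonomous} is precisely $g(f(t))=\Phi^{*}(f(t))-\langle f(t),z\rangle$ with $g(0)=0$. Since $f(t)\to 0$, the whole matter is the rate at which $g$ vanishes at $0$.

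For item~$(i)$: coercivity of $\Phi$ is equivalent to $\Phi^{*}$ being bounded above on a ball around $0$, hence to $\Phi^{*}$ (and therefore $g$) being continuous at $0$; see \cite{BC,Aze}. A convex function continuous at $0$ and vanishing there satisfies $g(p)\leq C\|p\|$ whenever $\|p\|\leq\delta$, for suitable $C,\delta>0$. Since $\|f(t)\|\leq\delta$ for $t$ large, one has $0\leq g(f(t))\leq C\|f(t)\|$ on that range, and the hypothesis $\int_0^{+\infty}\|f(t)-f_\infty\|\,dt<+\infty$ closes the tail; the contribution of any bounded interval is finite because $\Phi^{*}$ is finite near $0$.

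For item~$(ii)$: here the point is a sharper upper bound on $g$, obtained by duality. Writing $a\,d^{2}(\cdot,S)=(a\|\cdot\|^{2})\,\tdown\,\delta_{S}$, the always-valid identity $(f\tdown h)^{*}=f^{*}+h^{*}$ together with $(a\|\cdot\|^{2})^{*}=\tfrac{1}{4a}\|\cdot\|^{2}$ and $\delta_S^{*}=\sigma_S$ turns the hypothesis $\Phi\geq a\,d^{2}(\cdot,S)$, after conjugation, into
\begin{equation*}
\Phi^{*}(p)\leq\sigma_{S}(p)+\frac{1}{4a}\|p\|^{2},\qquad\text{hence}\qquad g(p)\leq\sup_{s\in S}\langle p,\,s-z\rangle+\frac{1}{4a}\|p\|^{2}.
\end{equation*}
Now $F=\cl[\R_{+}(S-S)]$ is a closed subspace (as $S-S$ is convex and symmetric), and $s-z\in S-S\subset F$ for every $s\in S$; since $S$ is bounded this gives $\sup_{s\in S}\langle p,s-z\rangle=\sup_{s\in S}\langle \Pi_{F}p,\,s-z\rangle\leq\mathrm{diam}(S)\,\|\Pi_{F}p\|$, while $\|p\|^{2}=\|\Pi_{F}p\|^{2}+\|\Pi_{F^{\perp}}p\|^{2}$. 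Absorbing $\|\Pi_{F}p\|^{2}$ into the linear term on a small ball, one obtains $g(p)\leq A\|\Pi_{F}p\|+B\|\Pi_{F^{\perp}}p\|^{2}$ for $\|p\|\leq\delta$, with $A,B$ depending only on $a$, $\delta$, $\mathrm{diam}(S)$. Evaluating at $p=f(t)-f_\infty$ for $t$ large and integrating, the two assumptions in~\eqref{eq.cond_l1_l2} yield $\int_{0}^{+\infty}g(f(t))\,dt<+\infty$; no local-integrability issue arises here, since $\Phi\geq a\,d^{2}(\cdot,S)$ is supercoercive (using again that $S$ is bounded), so $\Phi^{*}$ is finite and continuous on all of $H$.

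The normalization and the one-dimensional computation $(a\|\cdot\|^{2})^{*}=\tfrac{1}{4a}\|\cdot\|^{2}$ are routine. The only genuine content is the refined estimate of item~$(ii)$, and inside it the observation that $\sigma_{S}(p)-\langle p,z\rangle$ depends only on $\Pi_{F}p$ and is controlled \emph{linearly} --- not merely quadratically --- thanks to the boundedness of $S$; this is exactly what makes an $L^{1}$ bound on $\Pi_{F}(f-f_\infty)$ together with an $L^{2}$ bound on $\Pi_{F^{\perp}}(f-f_\infty)$ sufficient, and it is the step I expect to require the most care.
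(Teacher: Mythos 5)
Your proof is correct and follows essentially the same route as the paper's: for $(i)$ the local Lipschitz continuity of $\Phi^*$ at $0$ coming from coercivity, and for $(ii)$ the conjugate bound $\Phi^*\leq \frac{1}{4a}\|\cdot\|^2+\sigma_S$, the splitting of $f(t)-f_\infty$ along $F$ and $F^\perp$, the linear control of the support-function term via the boundedness of $S$, and the absorption of $\|\Pi_F p\|^2$ into the linear term for small $p$. The only cosmetic difference is that you exploit $s-z\in F$ directly where the paper passes through the subadditivity of $\sigma_{S-S}$ and the identity $\sigma_F=\delta_{F^\perp}$; these are equivalent.
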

\begin{proof} Without loss of generality, we assume that $f_\infty=0$ and that $\min_H \Phi=0$.

$(i)$ Since $\Phi$ is supposed to be coercive, the conjugate $\Phi^*$ is continuous at $0$. It
ensues classically that $\Phi^*$ is Lipschitz continuous in a neighborhood of $0$. Therefore there exist $r>0$ and $L>0$ such that $|\Phi^*(x)|\leq L\|x\|$ for every
$x\in H$ satisfying $\|x\|\leq r$. Let $M>0$ be such that $\|z\|\leq M$ for every $z\in S$. Recalling that $f(t)\to 0$ as $t\to +\infty$, we deduce that for $t$ large enough
$$\Phi^*(f(t))-\langle f(t),z\rangle \leq L\|f(t)\|+\|f(t)\|\|z\|\leq (L+M)\|f(t)\|.$$
The assumption $\int_0^{+\infty}\|f(t)\|\, dt<+\infty$, and $\Phi (z)= \min_H \Phi =0$,  clearly imply that Condition (\ref{eq.cond_l1_quasi-autonomous}) is satisfied.

$(ii)$ Since $\Phi\geq a\, d^2(.,S)=a\|\,.\,\|^2\tdown\delta_S$, we have $\Phi^*\leq \frac{1}{4a}\|\,.\,\|^2+\sigma_S$. This implies that for every $z\in S$,
\begin{eqnarray}
\Phi^*(f(t))-\langle f(t),z\rangle &\leq& \frac{1}{4a}\|f(t)\|^2+\sigma_S(f(t))-\langle f(t),z\rangle\nonumber\\
&=&\frac{1}{4a}\|f(t)\|^2+\sigma_{S-z}(f(t))\nonumber\\
&\leq&\frac{1}{4a}\|f(t)\|^2+\sigma_{S-S}(f(t)).\label{eq.majo_sigma_S-S}
\end{eqnarray} 
From $f(t)=\Pi_F f(t)+\Pi_{F^\perp} f(t)$, we deduce that
\begin{eqnarray*}
\sigma_{S-S}(f(t))&\leq &\sigma_{S-S}(\Pi_F f(t))+\sigma_{S-S}(\Pi_{F^\perp} f(t))\quad \mbox{ since $\sigma_{S-S}$ is subadditive}\\
&\leq& 2\, M\left\|\Pi_F f(t)\right\| +\sigma_F\left(\Pi_{F^\perp} f(t)\right) \ \ \mbox{ since $S-S\subset 2\, M\BB$, $S-S\subset F$}\\
&=&2\, M\left\|\Pi_F f(t)\right\| \quad \mbox{ since $\sigma_F=\delta_{F^\perp}$.}
\end{eqnarray*}  
Coming back to inequality (\ref{eq.majo_sigma_S-S}), and using Pythagoras' equality 
$$\|f(t)\|^2=\|\Pi_F f(t)\|^2+\|\Pi_{F^\perp} f(t)\|^2,$$
 we infer that
$$\Phi^*(f(t))-\langle f(t),z\rangle \leq\frac{1}{4a}\|\Pi_F f(t)\|^2+\frac{1}{4a}\|\Pi_{F^\perp} f(t)\|^2+2\, M\left\|\Pi_F f(t)\right\|.$$
Since $f(t)\to 0$ as $t\to +\infty$, we have $\|\Pi_F f(t)\|^2\leq \|\Pi_F f(t)\|$ for $t$ large enough. Assumption (\ref{eq.cond_l1_l2}) then clearly implies that condition (\ref{eq.cond_l1_quasi-autonomous}) is satisfied.
\end{proof}

\begin{rmrk} 
By combining Corollary \ref{co.quasi-autonomous} and Proposition \ref{pr.quasi-autonomous} $(i)$, we derive that if $\int_0^{+\infty}\|f(t)-~f_\infty\|\, dt<~+\infty$, then any trajectory of (\ref{eq.quasi-autonomous})  converges weakly toward some point of $S=\partial \Phi^{-1}(f_\infty)$. This result can be recovered directly by using the Opial lemma and the fact that the energy function $t\mapsto \Phi(x(t))-\langle f_\infty,x(t)\rangle$ tends toward its minimum as $t\to +\infty$. The inf-compactness assumption on $\Phi$ appears to be useless, hence the result obtained as a consequence of Corollary \ref{co.quasi-autonomous} and Proposition \ref{pr.quasi-autonomous} $(i)$ is not optimal.
The original part of Proposition \ref{pr.quasi-autonomous} lies in point~$(ii)$, which brings to light that the $L^1$-type condition on the function $f-f_\infty$ may be relaxed. If we assume the quadratic conditioning property (\ref{eq.quadratic_conditioning}), Proposition~\ref{pr.quasi-autonomous} $(ii)$ shows that it is enough to require a $L^2$-type condition for the part of $f-f_\infty$ that is projected on $F^\perp$.
\end{rmrk}

\subsection{Sweeping process}
The sweeping process was originally  considered by J.J. Moreau in the study of evolution problems from unilateral  mechanics. 

Given $t \mapsto C(t)$ a time-dependent closed convex set in $H$, (the moving constraint), and $\Phi : H \to \mathbb R$ a convex differentiable function (the driving force), it consists in the study of the following differential inclusion
\begin{equation}\label{eq.sweep1}\tag{SW}
\dot x(t)+ N_{C(t)} (x(t)) + \nabla \Phi (x(t))\ni 0, \qquad t\geq 0,
\end{equation}
where $N_{C(t)}(x)$ stands for the normal cone to $C(t)$ at $x\in C(t)$.
Since, its range of applications has been extended to various domains, like economical and social sciences, control theory.
An abundant litterature has been devoted to its study, but curiously only few results concern its asymptotical behaviour.

The differential inclusion (\ref{eq.sweep1}) falls in the setting of Theorems \ref{th.nonincreasing} and \ref{th.NAGI}, by taking 
$$\phi_t = \delta_{C(t)}(\cdot) +\Phi.$$  The monotonicity assumption required by Theorem \ref{th.nonincreasing} amounts to saying that the family $\{C(t); \, t\geq~0\}$ 
is nondecreasing for the set inclusion. On the other hand, it is easy to check that assumptions (H2)-(H3) of Theorem \ref{th.NAGI} imply that the set $C(t)$  tends toward $C_\infty$ as $t\to +\infty$ in the Painlev\'e-Kuratowski sense and that $C_\infty\subset C(t)$ for $t$ large enough. These assumptions on the family $\{C(t); \, t\geq 0\}$ are clearly quite stringent, and it is better to work directly with inclusion (\ref{eq.sweep1}), without resorting to the general results mentioned above.

For simplicity, we assume in the sequel that $\Phi=0$. Most of the existence results concerning (\ref{eq.sweep1}) rely on energy estimates. Thus we take for granted that the trajectories have finite energy, i.e.,  $\int_0^{+\infty} \|\dot{x} (t)\|^2 dt < + \infty $. The result stated below is an illustration of the energetical methods.

\begin{thrm}\label{th.sweep}
Let $\{C(t); \, t\geq 0\}$ be a family of closed convex sets in $H$. Assume that $C(t)$ converges to some $ C_\infty$ in the Mosco sense and that, 
\begin{equation}\label{eq.sweep2}
\forall z\in C_{\infty}, \ \exists z(t)\to z \mbox{ such that }  z(t) \in C(t), \mbox{ and }\ \int_0^{+\infty} \| z(t) - z\|^2 dt < + \infty.
\end{equation}
Let $x(.)$ be a strong global solution of (\ref{eq.sweep1}) which has a finite energy, i.e.,
\begin{equation}\label{eq.sweep20}
 \int_0^{+\infty} \|\dot{x} (t)\|^2 dt < + \infty.
\end{equation}
 Then, there exists $x_\infty\in C_\infty$ such that $x(t)\rightharpoonup x_\infty$ weakly in $H$ as $t\to +\infty$.
\end{thrm}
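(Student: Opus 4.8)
The plan is to deduce the weak convergence from the Opial lemma (Lemma~\ref{lm.Opial}) applied with $S=C_\infty$. This requires checking two things: that $\lim_{t\to+\infty}\|x(t)-z\|$ exists for every $z\in C_\infty$, and that every weak sequential cluster point of $x(\cdot)$ belongs to $C_\infty$.

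To obtain the first property, I would fix $z\in C_\infty$ and use \eqref{eq.sweep2} to choose a curve $z(t)\in C(t)$ with $z(t)\to z$ and $\int_0^{+\infty}\|z(t)-z\|^2\,dt<+\infty$. Setting $h(t)=\demi\|x(t)-z\|^2$, which is locally absolutely continuous since $x(\cdot)$ is, one has $\dot h(t)=\langle x(t)-z,\dot x(t)\rangle$ for a.e. $t$. The inclusion $-\dot x(t)\in N_{C(t)}(x(t))$ gives $\langle -\dot x(t),\,y-x(t)\rangle\le 0$ for every $y\in C(t)$, and the choice $y=z(t)$ yields $\langle \dot x(t),\,x(t)-z(t)\rangle\le 0$. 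Hence, writing $x(t)-z=(x(t)-z(t))+(z(t)-z)$,
$$\dot h(t)\le \langle z(t)-z,\dot x(t)\rangle\le \demi\|z(t)-z\|^2+\demi\|\dot x(t)\|^2 .$$
By \eqref{eq.sweep2} and the finite-energy assumption \eqref{eq.sweep20}, the right-hand side lies in $L^1(0,+\infty)$, so $\dot h_+\in L^1(0,+\infty)$; since $h\ge 0$, a classical lemma then gives that $\lim_{t\to+\infty}h(t)$ exists in $\R$, which is assumption $(i)$ of Opial's lemma.

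For the second property, let $t_n\to+\infty$ with $x(t_n)\rightharpoonup x_\infty$ weakly. Since $-\dot x(t)\in N_{C(t)}(x(t))$ forces $x(t)\in C(t)$ at every $t$ where the inclusion holds, one may choose the $t_n$ in this full-measure set, so that $x(t_n)\in C(t_n)$ for all $n$. The Mosco convergence $C(t)\to C_\infty$ includes in particular its weak-limit half: any weak limit of a sequence $x_n\in C(t_n)$ with $t_n\to+\infty$ belongs to $C_\infty$. Hence $x_\infty\in C_\infty$, which is assumption $(ii)$ of Opial's lemma, and Lemma~\ref{lm.Opial} then produces a point $x_\infty\in C_\infty$ with $x(t)\rightharpoonup x_\infty$ as $t\to+\infty$.

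The whole argument is a clean Opial scheme, and I do not expect a serious obstacle. The two points needing care are the use of the normal-cone inclusion against the recovery curve $z(t)$ — this is exactly where the $L^2$ summability in \eqref{eq.sweep2} is matched, via Young's inequality, with the finite-energy hypothesis \eqref{eq.sweep20} — and the verification that $x(t_n)\in C(t_n)$, needed in order to invoke the weak-closedness half of Mosco convergence.
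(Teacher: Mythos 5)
Your proof is correct and follows essentially the same route as the paper's: the same Opial scheme with $S=C_\infty$, the same splitting $x(t)-z=(x(t)-z(t))+(z(t)-z)$ combined with the normal-cone inequality tested at $y=z(t)$, and the weak half of Mosco convergence for the cluster points; the only cosmetic difference is that you bound $\langle z(t)-z,\dot x(t)\rangle$ via Young's inequality where the paper uses Cauchy--Schwarz together with the fact that a product of two $L^2$ functions is in $L^1$. Your remark about picking the times $t_n$ in the full-measure set where $-\dot x(t)\in N_{C(t)}(x(t))$ holds is at the same level of informality as the paper, which simply takes $x(t)\in C(t)$ for granted (and can be made rigorous by continuity of $x(\cdot)$).
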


\begin{proof}
Let us apply the Opial lemma to $x(.)$ and $S= C_{\infty}$.
Given $z \in C_{\infty}$, set $h_z (t)= \frac {1}{2} \| x(t) -z\|^2$.
By differentiating, we find for almost every $t\geq 0$
\begin{equation}
\dot h_z(t)=\langle x(t)-z, \dot x(t)\rangle.\label{eq.der_h1}
\end{equation}
%By (\ref{eq.sweep1}), there exists $p(t) \in  N_{C(t)} (x(t))$ such that $$\dot x(t)= -p(t).$$ Hence \begin{equation}\dot h_z(t)+ \langle x(t)-z, p(t)\rangle\leq 0. \label{eq.der_h2} \end{equation}
By assumption  (\ref{eq.sweep2}) there exists
$z(t)\to z \mbox{ such that }  z(t) \in C(t), \mbox{ and }\ \int_0^{+\infty} \| z(t) - z\|^2 dt < + \infty.$
By introducing $z(t)$ in (\ref{eq.der_h1}), we obtain
$$\dot h_z(t)= \langle x(t)-z(t), \dot x(t)  \rangle+\langle z(t) -z, \dot x(t)\rangle.$$ %\label{eq.der_h3}
Since $-\dot x(t) \in  N_{C(t)} (x(t))$ and $z(t) \in C(t)$, we have
$$\langle x(t)-z(t), \dot x(t)  \rangle= \langle z(t)-x(t), -\dot x(t)  \rangle\leq 0. %\label{eq.der_h4}
$$
Hence
\begin{eqnarray}
\dot h_z(t) &\leq& \langle z(t) -z, \dot x(t)\rangle. \nonumber\\%\label{eq.der_h5}
&\leq& \| z(t) -z\| \|\dot x(t)\|\quad \mbox{by Cauchy-Schwarz inequality}.\label{eq.der_h6}
\end{eqnarray}
By assumptions (\ref{eq.sweep2}) and (\ref{eq.sweep20}) the second member of (\ref{eq.der_h6}) belongs to $L^1 (0,  +\infty)$.
Hence, the limit of $ h_z(t)$ exists as $t \to + \infty$.

On the other hand since $x(t) \in C(t)$, and $C(t)$ Mosco converges to $C_{\infty}$, we have that any weak cluster point of  the trajectory belongs to $C_{\infty}$.
Thus the two conditions of the Opial lemma are satisfied, which gives the weak convergence of the trajectory.

\end{proof}

\subsection{Slow case and strong attraction of the optimal path}
In this subsection, we assume that for every $t\geq 0$, there exist $\xi(t)\in H$ and $\alpha(t)>0$ such that
$$\forall x\in H, \quad \phi_t(x)\geq \phi_t(\xi(t))+\alpha(t)\, \|x-\xi(t)\|^2.$$\\
It implies that $\xi(t)$ is a strong minimum of the function $\phi_t$. 

\begin{rmrk} Fix $z\in S_\infty$. We deduce from the above condition that
$$\alpha(t)\,\|z-\xi(t)\|^2\leq v_\infty(t)-\min_H\phi_t.$$
If $\xi^*=\lim_{t\to +\infty}\xi(t) $ exists and is not equal to $z$, there
exists $m>0$ such that $\|z-\xi(t)\|\geq m$ for $t$ large enough. It ensues that
$$\alpha(t)\leq \frac{1}{m^2}(v_\infty(t)-\min_H\phi_t)\quad \mbox{ for $t$ large enough.}$$
\end{rmrk}
We assume that the function $\alpha$ is measurable and satisfies
$$\int_0^{+\infty}\alpha(t)\, dt=+\infty,$$
which corresponds to a slow decay condition.
Let us first consider the case of an optimal trajectory having a finite length.
The following result is a variant of \cite[Theorem 3.2]{AttCom}, up to a slight modification of the arguments.\footnote{A strong convexity property is required in the statement of  \cite[Theorem 3.2]{AttCom}.  The strong convexity property is relaxed and replaced here with the strong minimum property $(i)$.}
\begin{thrm}\label{th.selection_short}
Let $\{\phi_t, \, t\geq 0\}$ be a family of closed convex functions from $H$ to $\rinf$. Assume that
\begin{itemize}
\item[$(i)$] $\forall x\in H, \quad \phi_t(x)\geq \phi_t(\xi(t))+\alpha(t)\, \|x-\xi(t)\|^2$;
\item[$(ii)$] $\int_0^{+\infty}\alpha(t)\, dt=+\infty$;
\item[$(iii)$] the optimal path $\xi(.)$ is locally absolutely continuous on $\R_+$, and satisfies $\int_0^{+\infty}\|\dot \xi(t)\|\, dt<~+\infty.$
\end{itemize}
If $x(.)$ is a strong global solution of \eqref{eq.NAGI},
then $\lim_{t\to +\infty}\|x(t)-\xi(t)\|=0$, and hence $\lim_{t\to +\infty}x(t)=\xi^*$ strongly in $H$, where $\xi^*$ is
the limit of the optimal path $\xi(t)$ as $t\to +\infty$.
\end{thrm}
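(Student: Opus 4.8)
The plan is to run a purely energetic argument: show directly that $w(t):=\|x(t)-\xi(t)\|$ tends to $0$ by means of a Gronwall-type differential inequality, and then deduce strong convergence of $x(\cdot)$ from the finite length of the optimal path $\xi(\cdot)$. First I would record that, $x(\cdot)$ being a strong solution and $\xi(\cdot)$ being locally absolutely continuous by $(iii)$, the map $t\mapsto \tfrac12\|x(t)-\xi(t)\|^2$ is locally absolutely continuous with
\[
\tfrac{d}{dt}\Big(\tfrac12\|x(t)-\xi(t)\|^2\Big)=\langle x(t)-\xi(t),\dot x(t)-\dot\xi(t)\rangle \qquad\text{a.e. on }\R_+.
\]

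The core estimate comes from the strong-minimum hypothesis. Since $-\dot x(t)\in\partial\phi_t(x(t))$ a.e., the subdifferential inequality evaluated at $y=\xi(t)$ gives $\langle x(t)-\xi(t),\dot x(t)\rangle\leq \phi_t(\xi(t))-\phi_t(x(t))$, and then $(i)$ yields $\phi_t(\xi(t))-\phi_t(x(t))\leq -\alpha(t)\,\|x(t)-\xi(t)\|^2$. Combining with Cauchy--Schwarz on the $\dot\xi$ term, I obtain
\[
\tfrac{d}{dt}\Big(\tfrac12 w(t)^2\Big)\leq -\alpha(t)\,w(t)^2+w(t)\,\|\dot\xi(t)\| \qquad\text{a.e.}
\]
Dividing by $w(t)$ on the open set $\{w>0\}$, and noting that a nonnegative absolutely continuous function has vanishing derivative a.e. on its zero set, this upgrades to the linear differential inequality $\dot w(t)+\alpha(t)\,w(t)\leq \|\dot\xi(t)\|$ a.e. on $\R_+$.

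The rest is a standard integration. With $A(t):=\int_0^t\alpha(s)\,ds$ the inequality reads $\tfrac{d}{dt}\big(e^{A(t)}w(t)\big)\leq e^{A(t)}\|\dot\xi(t)\|$ a.e., hence
\[
w(t)\leq e^{-A(t)}w(0)+\int_0^t e^{-(A(t)-A(s))}\,\|\dot\xi(s)\|\,ds.
\]
By $(ii)$ we have $A(t)\to+\infty$, so the first term vanishes; for the convolution term I would split the integral at a fixed $T$, bounding the exponential by $e^{-(A(t)-A(T))}$ on $[0,T]$ (which tends to $0$ as $t\to+\infty$, the remaining integral being finite) and by $1$ on $[T,t]$ (contributing at most $\int_T^{+\infty}\|\dot\xi(s)\|\,ds$, small for $T$ large by $(iii)$). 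This gives $\limsup_{t\to+\infty}w(t)\leq \int_T^{+\infty}\|\dot\xi(s)\|\,ds$ for every $T$, whence $\|x(t)-\xi(t)\|\to 0$.

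Finally, $(iii)$ forces $\xi(\cdot)$ to satisfy the Cauchy criterion, so $\xi(t)\to\xi^*$ strongly for some $\xi^*\in H$, and writing $x(t)=(x(t)-\xi(t))+\xi(t)$ gives $x(t)\to\xi^*$ strongly. The only genuinely delicate point is the passage from the quadratic inequality to the linear one at points where $w$ vanishes (non-smoothness of the norm); it can alternatively be bypassed by working with $w_\eps(t)=\sqrt{w(t)^2+\eps}$ and letting $\eps\downarrow 0$, everything else being a routine Gronwall estimate together with the elementary decay lemma for $\int_0^t e^{-(A(t)-A(s))}g(s)\,ds$ when $g\in L^1(0,+\infty)$ and $A(t)\to+\infty$.
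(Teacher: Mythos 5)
Your proof is correct and follows essentially the same route as the paper: both derive the differential inequality $\dot k(t)+2\alpha(t)k(t)\leq \sqrt{2}\,\|\dot\xi(t)\|\sqrt{k(t)}$ from the subdifferential inequality, the strong-minimum hypothesis $(i)$, and Cauchy--Schwarz. The only difference is that the paper then delegates the integration to the cited reference (Attouch--Cominetti, Theorem 3.2), whereas you carry out the Gronwall step explicitly (including the correct handling of the zero set of $w$ and the splitting of the convolution integral), which makes your argument self-contained.
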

\begin{proof}
Consider the function $k$ defined by $k(t)=\demi \|x(t)-\xi(t)\|^2$. This function is absolutely continuous, and for almost every $t\in ]0,+\infty[$ we have
\begin{eqnarray*}
\dot k(t)&=& \langle \dot x(t)-\dot \xi(t), x(t)-\xi(t)\rangle\\
&\leq & \langle \dot x(t), x(t)-\xi(t)\rangle +\|\dot \xi(t)\| \|x(t)-\xi(t)\|.
\end{eqnarray*}
Since $-\dot x(t)\in \partial \phi_t(x(t))$, we deduce
from the subdifferential inequality
$$\dot k(t)+\phi_t(x(t))-\phi_t(\xi(t))\leq\|\dot \xi(t)\| \|x(t)-\xi(t)\|.$$
Invoking Assumption $(i)$, we get
$$\dot k(t)+\alpha(t)\, \|x(t)-\xi(t)\|^2\leq \|\dot \xi(t)\| \|x(t)-\xi(t)\|,$$
or equivalently
$$\dot k(t)+2\alpha(t)\, k(t)\leq \sqrt{2}\|\dot \xi(t)\| \sqrt{k(t)}.$$
The rest of the proof is analogous to that of \cite[Theorem 3.2]{AttCom}.
\end{proof}

Let us now consider the case of an optimal trajectory satisfying $\|\dot \xi(t)\| =o(\alpha(t))$ as $t\to +\infty$, see \cite[Theorem 3.3]{AttCom}.
\begin{thrm}\label{th.selection_long} Under the assumptions $(i)$ and $(ii)$ of Theorem~\ref{th.selection_short}, assume moreover that the optimal path $\xi(.)$ is locally absolutely continuous on $\R_+$ and that $\lim_{t\to+\infty}\|\dot \xi(t)\|/\alpha(t)=0.$
 Let $x(.)$ be a strong global solution of \eqref{eq.NAGI}.
 Then $\lim_{t\to +\infty}\|x(t)-~\xi(t)\|=~0$, therefore it converges strongly in $H$ if and only if the optimal path $\xi(t)$ has a limit as $t\to+\infty$.
\end{thrm}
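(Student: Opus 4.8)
The plan is to follow the strategy of the proof of Theorem~\ref{th.selection_short}, but to exploit the stronger hypothesis $\|\dot\xi(t)\|/\alpha(t)\to 0$ in order to \emph{linearize} the relevant differential inequality. As in that proof, set $k(t)=\demi\|x(t)-\xi(t)\|^2$. This function is locally absolutely continuous, being built from a strong global solution of \eqref{eq.NAGI} and from the locally absolutely continuous path $\xi(\cdot)$, and the computation carried out there (subdifferential inequality for $\partial\phi_t$, then Assumption~$(i)$) gives, for almost every $t>0$,
$$\dot k(t)+2\alpha(t)\, k(t)\leq \sqrt{2}\,\|\dot\xi(t)\|\,\sqrt{k(t)}.$$
I would then absorb the nonlinear right-hand side by the elementary inequality $\sqrt{2}\,\|\dot\xi(t)\|\,\sqrt{k(t)}\le \alpha(t)\,k(t)+\|\dot\xi(t)\|^2/(2\alpha(t))$, valid since $\alpha(t)>0$ (Young's inequality), to obtain the linear differential inequality
$$\dot k(t)+\alpha(t)\, k(t)\leq \tfrac{1}{2}\,\alpha(t)\,\delta(t)^2,\qquad \delta(t):=\|\dot\xi(t)\|/\alpha(t),$$
where, crucially, $\delta(t)\to 0$ as $t\to+\infty$ by hypothesis.

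Next I would integrate this inequality by the usual Gronwall device. Put $A(t)=\int_0^t\alpha(s)\,ds$, so that $A(t)\to+\infty$ by Assumption~$(ii)$. Multiplying by the integrating factor $e^{A(t)}$ gives $\frac{d}{dt}\big(e^{A(t)}k(t)\big)\le \demi\,\alpha(t)\,\delta(t)^2\,e^{A(t)}$ almost everywhere, and since $t\mapsto e^{A(t)}k(t)$ is locally absolutely continuous one may integrate between $T$ and $t$:
$$e^{A(t)}k(t)\le e^{A(T)}k(T)+\tfrac{1}{2}\int_T^t \alpha(s)\,\delta(s)^2\,e^{A(s)}\,ds.$$
Given $\eps>0$, choose $T$ so large that $\delta(s)^2\le\eps$ for every $s\ge T$; then the integral on the right is at most $\tfrac{\eps}{2}\int_T^t\alpha(s)e^{A(s)}\,ds=\tfrac{\eps}{2}\big(e^{A(t)}-e^{A(T)}\big)\le\tfrac{\eps}{2}e^{A(t)}$, whence $k(t)\le e^{A(T)-A(t)}k(T)+\eps/2$. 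Letting $t\to+\infty$ and using $A(t)\to+\infty$ yields $\limsup_{t\to+\infty}k(t)\le\eps/2$; since $\eps>0$ is arbitrary, $k(t)\to 0$, i.e. $\lim_{t\to+\infty}\|x(t)-\xi(t)\|=0$.

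The final equivalence is then immediate: writing $x(t)=(x(t)-\xi(t))+\xi(t)$ with $x(t)-\xi(t)\to 0$ strongly, the trajectory $x(\cdot)$ converges strongly in $H$ if and only if $\xi(t)$ has a strong limit as $t\to+\infty$, and in that case the two limits agree.

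I do not expect a genuine obstacle here. The only delicate point is the passage from the nonlinear inequality to the linear one — this is exactly where the hypothesis $\|\dot\xi(t)\|=o(\alpha(t))$ is used, replacing the finite-length condition $(iii)$ of Theorem~\ref{th.selection_short} — together with the routine measurability and absolute-continuity bookkeeping needed to justify the integration (in particular local integrability of $\alpha$, implicit in the writing of $A(t)=\int_0^t\alpha$). The argument is otherwise entirely parallel to that of \cite[Theorem~3.3]{AttCom}.
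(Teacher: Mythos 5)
Your argument is correct: the differential inequality $\dot k(t)+2\alpha(t)k(t)\le\sqrt{2}\,\|\dot\xi(t)\|\sqrt{k(t)}$ is exactly the one established in the proof of Theorem \ref{th.selection_short}, and your linearization via Young's inequality followed by the integrating-factor/Gronwall argument (using $\int_0^{+\infty}\alpha=+\infty$ and $\|\dot\xi\|/\alpha\to 0$) validly yields $k(t)\to 0$, hence the stated equivalence. The paper itself gives no proof but refers to \cite[Theorem 3.3]{AttCom}, whose argument this reproduces in essence (the cited proof passes to $\sqrt{k}$ instead of using Young's inequality, a cosmetic difference), so no further comparison is needed.
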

For the proof of this result, the reader is referred to \cite[Theorem 3.3]{AttCom}.

\section{Infimum value associated to the viscosity minimization problem $\inf_H(\Psi+\eps \Phi)$}\label{se.omega}
\subsection{Main properties of the map $\eps\mapsto \omega(\eps)=\inf_H(\Psi+\eps \Phi)$}
As we have already pointed out, the map $\omega$ plays a crucial role in the asymptotic study of the dynamic system \eqref{MAG}.  We now make a systematic study of this function. 
Throughout this section, we assume $(\cH_\Psi)$ and $(\cH_\Phi)$, i.e.,
\begin{itemize}
\item[$(\cH_\Psi)$ \,]  $\Psi : H \to \R\cup\{+\infty\}$ is a closed convex proper function such that  $\inf_H \Psi =0$, and $C=~\argmin \Psi\neq~\emptyset$.
\item[$(\cH_\Phi)$ \,]  $\Phi : H \to \R\cup\{+\infty\}$ is a closed convex proper function such that  $\inf_C \Phi=0$, and $S=~\argmin_C \Phi\neq~\emptyset$.
\end{itemize}
 Recall the definition \eqref{eq.def_omega} of the map $\omega:\R_+\to \R\cup\{-\infty\}$:  for every $\eps\geq 0$
\begin{equation*}
\omega(\eps)=\inf_H(\Psi+\eps \Phi).
\end{equation*} 
We denote by $(\cP_\eps)$ the corresponding minimization problem 
$$\inf_{x\in H}\left\{\Psi(x)+\eps\, \Phi(x)\right\}.\leqno (\cP_\eps)$$
\begin{rmrk}
Assumption $(\cH_\Phi)$ implies that the domain of $\Phi$
intersects the set $C$ of minimizers of $\Psi$. This corresponds to a regular perturbation situation, where we can expect a simple asymptotic development for $\omega(\eps)$ as $\eps$ goes to zero, as well as the convergence of the filtered sequence of solutions of $(\cP_\eps)$ to a solution of the hierarchical minimization problem $\min_C \Phi$. That is the situation we consider. By contrast, when the domain of $\Phi$ does not intersect the set $C=\argmin \Psi$, we are faced with a singular perturbation. This is a more involved situation, that one encounters for example in  phase transition, when considering the  Van der Waals-Cahn-Hilliard viscous approximation of the Gibbs free energy. In this case, we must appeal to  $\Gamma$-convergence methods for rescaled energy functions, see \cite{Att2}, \cite[Chap. 12.5]{AttButMic},  \cite{Tor2}. 
\end{rmrk}
The following proposition gathers properties of the map $\omega$.
\begin{prpstn}\label{pr.basic}
Assume Hypotheses $(\cH_\Psi)$-$(\cH_\Phi)$. \\
(a) The map $\eps\mapsto \omega(\eps)$ is nonpositive, nonincreasing and concave on $\R_+$.\\
 Assume moreover that the function $\Psi+\Phi$ is coercive\footnote{The coercivity of $\Psi +\Phi$ implies that of $\delta_C+\Phi$, and we deduce classically that $\argmin_C \Phi=\argmin (\delta_C+\Phi)\neq\emptyset$.}. Then \\
(b)
for every $\eps\in [0,1]$, we have $\omega(\eps)>-\infty$,  and the infimum is attained in the definition of $\omega(\eps)$.\\
(c) $\lim_{\eps\to 0^+}\omega(\eps)/\eps=0$. In other words, the following asymptotic expansion holds\footnote{For simplicity, we assumed  $\min_H\Psi= \min_C\Phi=0$. The statement remains valid without any assumption on the (finite) values of $\min_H\Psi$ and $\min_C\Phi$. The asymptotic expansion (\ref{eq.asymp_expan}) 
can be found in \cite[Theorem 2.5]{Att2}.} as $\eps\to 0$
\begin{equation}\label{eq.asymp_expan}
\min_H(\Psi+\eps\Phi)=\min_H\Psi+\eps \min_C\Phi+o(\eps).
\end{equation}
\end{prpstn}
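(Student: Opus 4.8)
The plan is to establish parts (a), (b), (c) in order, each leaning on the previous ones. For part (a), I would first observe that $\omega(\eps)=\inf_H(\Psi+\eps\Phi)$ is an infimum over $x$ of the affine-in-$\eps$ functions $\eps\mapsto \Psi(x)+\eps\Phi(x)$; an infimum of affine functions is concave and upper semicontinuous, which gives concavity on $\R_+$. For nonpositivity: pick any $x\in S=\argmin_C\Phi$ (nonempty by $(\cH_\Phi)$); then $\Psi(x)=0$ (since $x\in C=\argmin\Psi$ and $\inf_H\Psi=0$) and $\Phi(x)=0$ (since $\inf_C\Phi=0$), so $\omega(\eps)\le \Psi(x)+\eps\Phi(x)=0$. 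For monotonicity: for $0\le \eps_1\le\eps_2$ and any $x$, since $\Phi(x)\ge \inf_H\Phi$; more carefully, restrict attention to $x\in\dom\Psi\cap\dom\Phi$ — on $C\cap\dom\Phi$ we have $\Phi\ge 0$, but off $C$ the function $\Phi$ could be negative, so I would instead note that $\omega$ is concave with $\omega(0)=\inf_H\Psi=0$ and $\omega\le 0$ everywhere; a concave function that is $0$ at the left endpoint of its domain and nonpositive must be nonincreasing. That settles (a).

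For part (b), assume $\Psi+\Phi$ is coercive. For $\eps\in[0,1]$ write $\Psi+\eps\Phi = (1-\eps)\Psi + \eps(\Psi+\Phi)$. Since $\Psi\ge 0$ and $\Psi+\Phi$ is coercive (hence bounded below and with bounded sublevel sets), the function $\Psi+\eps\Phi$ is a convex combination of a nonnegative function and a coercive lower semicontinuous proper function, so it is itself coercive, lower semicontinuous, proper, and bounded below; therefore $\omega(\eps)>-\infty$ and the infimum is attained by the standard direct method (a minimizing sequence stays in a bounded, hence weakly compact, set, and weak lower semicontinuity of the convex lsc function passes to the limit). The case $\eps=1$ is immediate and $\eps=0$ gives $\omega(0)=0$ attained on $C$.

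For part (c), I want $\lim_{\eps\to0^+}\omega(\eps)/\eps=0$. Fix $x_0\in S$; then $\omega(\eps)\le \Psi(x_0)+\eps\Phi(x_0)=0$, so $\omega(\eps)/\eps\le 0$, giving $\limsup_{\eps\to0^+}\omega(\eps)/\eps\le 0$. For the reverse inequality, let $x_\eps\in\argmin(\Psi+\eps\Phi)$ from part (b), so $\omega(\eps)=\Psi(x_\eps)+\eps\Phi(x_\eps)$. Since $\omega(\eps)\le 0$ and $\Psi(x_\eps)\ge 0$, we get $\eps\Phi(x_\eps)\le -\Psi(x_\eps)\le 0$, hence $\Phi(x_\eps)\le 0$ and $\omega(\eps)/\eps = \Psi(x_\eps)/\eps+\Phi(x_\eps)\ge \Phi(x_\eps)$. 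So it suffices to show $\liminf_{\eps\to0^+}\Phi(x_\eps)\ge 0$. The family $(x_\eps)_{\eps\in[0,1]}$ lies in a sublevel set of $\Psi+\Phi$ (indeed $\Psi(x_\eps)+\Phi(x_\eps)\le \Psi(x_\eps)+\eps\Phi(x_\eps) = \omega(\eps)\le 0$ once $\Phi(x_\eps)\le 0$... one must be a touch careful here: $\Phi(x_\eps)\le 0$ only gives $\Psi(x_\eps)+\Phi(x_\eps)\le\Psi(x_\eps)\le \Psi(x_\eps)+\eps\Phi(x_\eps)/\eps$; rather use $\Psi(x_\eps)+\Phi(x_\eps)\le\Psi(x_\eps)\le-\eps\Phi(x_\eps)+0$ is awkward, so instead bound via $(1-\eps)\Psi(x_\eps)+\eps(\Psi(x_\eps)+\Phi(x_\eps))=\omega(\eps)\le0$ to get $\Psi(x_\eps)+\Phi(x_\eps)\le 0$ when $\eps>0$), hence by coercivity $(x_\eps)$ is bounded. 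Take any sequence $\eps_n\to0^+$ with $x_{\eps_n}\rightharpoonup \bar x$ weakly; from $\Psi(x_{\eps_n})\le -\eps_n\Phi(x_{\eps_n})\to 0$ (using boundedness of $\Phi(x_{\eps_n})$, itself following from $\Psi+\Phi$ bounded below and $\Psi\ge0$) and weak lsc of $\Psi$, we get $\Psi(\bar x)\le 0$, so $\bar x\in C$; then weak lsc of $\Phi$ gives $\liminf_n\Phi(x_{\eps_n})\ge \Phi(\bar x)\ge \inf_C\Phi = 0$. Since every weak cluster point gives this, $\liminf_{\eps\to0^+}\Phi(x_\eps)\ge 0$, completing the proof; the asymptotic expansion (\ref{eq.asymp_expan}) is the restatement $\omega(\eps)=\eps\min_C\Phi+o(\eps)$ after restoring the normalization.

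The main obstacle I anticipate is the careful bookkeeping in part (c): one must simultaneously control $\Psi(x_\eps)\to 0$, the boundedness of $\Phi(x_\eps)$, and the boundedness of $(x_\eps)$, all of which are mildly entangled, and one must resist the temptation to assume $\Phi\ge 0$ globally — it holds only on $C$. Splitting $\Psi+\eps\Phi$ as the convex combination $(1-\eps)\Psi+\eps(\Psi+\Phi)$ is the device that cleanly extracts both coercivity (for (b)) and the sublevel-set bound (for the boundedness in (c)), so I would set that identity up early and reuse it.
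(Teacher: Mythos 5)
Your argument is correct, and for parts (a) and (b) it is essentially the paper's proof: nonpositivity by evaluating at a point of $S$, concavity as an infimum of affine functions, monotonicity from concavity together with $\omega(0)=0$ and $\omega\leq 0$, and coercivity of $\Psi+\eps\Phi$ via the inequality $\Psi+\eps\Phi\geq \eps(\Psi+\Phi)$ (your convex-combination identity $(1-\eps)\Psi+\eps(\Psi+\Phi)$ is the same device). For (c) you organize things differently: the paper argues by contradiction with a sequence of \emph{approximate} minimizers $x_n$ of $\Psi+\eps_n\Phi$ and never invokes attainment, whereas you run a direct argument through the \emph{exact} minimizers $x_\eps$ furnished by (b), reducing everything to $\liminf_{\eps\to 0^+}\Phi(x_\eps)\geq 0$. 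The core estimates (boundedness from the sublevel set of the coercive function $\Psi+\Phi$, then weak lower semicontinuity of $\Psi$ and $\Phi$ along a weakly convergent subsequence) coincide, so this is a stylistic rather than substantive difference; your route is slightly cleaner to read but leans on (b), while the paper's contradiction argument is self-contained at the level of infima.

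One step should be tightened: your parenthetical justification that $\Phi(x_{\eps_n})$ is bounded, ``from $\Psi+\Phi$ bounded below and $\Psi\geq 0$,'' does not by itself give a lower bound, since $\Phi\geq \inf_H(\Psi+\Phi)-\Psi$ is useless without an upper bound on $\Psi(x_{\eps_n})$ --- which is precisely what you are trying to prove at that moment. The gap closes in one line by either of two means already at your disposal: (i) since $(x_\eps)_{0<\eps\leq 1}$ is bounded and $\Phi$ is closed convex proper, a continuous affine minorant of $\Phi$ bounds $\Phi(x_{\eps_n})$ from below (this is exactly the paper's move); or (ii) combine the two inequalities you already have, $\Psi(x_\eps)\leq -\eps\,\Phi(x_\eps)$ and $\Phi(x_\eps)\geq m-\Psi(x_\eps)$ with $m=\inf_H(\Psi+\Phi)>-\infty$, to get $(1-\eps)\Psi(x_\eps)\leq -\eps\, m$, hence $\Psi(x_\eps)\to 0$ and then $\Phi(x_\eps)$ bounded. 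With that repair the proof is complete; the final passage from weakly convergent subsequences to the full $\liminf$ is the standard extraction argument and is fine as sketched.
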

\begin{proof} (a)
Given $z\in S$, we have $$\omega(\eps)\leq \Psi (z)+\eps \Phi (z)=0,$$
hence  $\omega(\eps)\leq 0$ for every $\eps\geq 0$.
Observe that the map $\eps\mapsto \Psi(x)+\eps\Phi(x)$ is affine, hence the map $\eps\mapsto \omega(\eps)$ is concave as an infimum
of affine functions.
Since the function $\omega:\R_+\to \R\cup\{-\infty\}$ is concave, it admits a right (resp. left) derivative at every $t\geq 0$ (resp. $t>0$). In particular, we have
$$\omega'_+(0)=\lim_{\eps\to 0^+}\frac{1}{\eps}(\omega(\eps)-\omega(0))\leq 0,$$
since $\omega(0)=0$, and $\omega(\eps)\leq 0$ for every $\eps >0$. The concavity of $\omega$ implies that $\omega'_+(\eps)\leq 0$ (resp. $\omega'_-(\eps)\leq 0$) for every $\eps>0$. We deduce that the function $\omega$ is nonincreasing on $\R_+$.\\
(b)  First observe that the conclusion is immediate for $\eps=0$. 
Now assume that $\eps\in ]0,1]$. Since $\Psi(x)\geq 0$, we have
$$\Psi(x)+\eps\, \Phi(x)\geq \eps\, (\Psi(x)+\Phi(x)).$$
From the coercivity of $\Psi+\Phi$, we deduce that the lower semicontinuous convex function $x\mapsto \Psi(x)+\eps\, \Phi(x)$ is coercive. It ensues classically that the minimization problem $(\cP_\eps)$ has at least one solution,
and that $\omega(\eps)=\inf \cP_\eps>-\infty$.\\
(c) Let us argue by contradiction and assume that there exist
$\eta>0$ and a sequence $(\eps_n)$ tending toward $0$ such that $\omega(\eps_n)/\eps_n\leq -\eta$. 
From the definition of $\omega(\eps_n)$, there exists a sequence $(x_n)$ in $H$ such that
\begin{equation}\label{eq.majopsi+phi}
\forall n\in \N,\quad \Psi(x_{n}) +\, \eps_n\,\Phi(x_{n})
\leq -\frac{\eta}{2}\, \eps_n.
\end{equation}
Since $\lim_{n\to +\infty}\eps_n=0$ and $\Psi(x_n)\geq 0$, we have
$\eps_n\, \Psi(x_n)\leq \Psi(x_n)$ for $n$ large enough, say $n\geq n_0$. In view of (\ref{eq.majopsi+phi}), this implies that for every $n\geq n_0$
\begin{equation}\label{eq.majopsi+phi_bis}
\Psi(x_{n}) +\, \Phi(x_{n}) \leq -\frac{\eta}{2},
\end{equation}
or equivalently 
$$x_n\in \left[\Psi+\Phi\leq  -\frac{\eta}{2}\right].$$
Recalling that the function $\Psi+\Phi$ is coercive by assumption, we deduce that the 
sequence $(x_{n})$ is bounded in $H$. Therefore there exist $x_\infty\in H$ and a subsequence of $(x_{n})$, still
denoted by $(x_{n})$, that converges weakly  to $x_\infty$ in $H$.
Since $\Phi$ is closed and convex, it has a continuous affine minorant.
Hence there exist $a\in \R$ and $p\in H$ such that $\Phi(x)\geq a +\langle p,x\rangle$ for every $x\in H$.
By using inequality (\ref{eq.majopsi+phi}), we infer that
$$\Psi(x_{n})
\leq -\eps_n\left[\frac{\eta}{2}+a+ \langle p, x_n\rangle\right].$$
Taking the upper limit when 
$n\to +\infty$, we find 
\begin{equation}\label{eq.limsuppsi}
\limsup_{n\to +\infty}\Psi(x_{n})\leq 0.
\end{equation}
On the other hand, since $\Psi(x_{n})\geq 0$, we infer from
(\ref{eq.majopsi+phi_bis}) that
\begin{equation}\label{eq.limsupphi}
\limsup_{n\to +\infty}\Phi(x_{n})\leq -\frac{\eta}{2}.
\end{equation}
 From the closedness of $\Psi$ (resp. $\Phi$) with respect to the weak topology in $H$ and inequality (\ref{eq.limsuppsi}) (resp. (\ref{eq.limsupphi})), we deduce respectively
that
$$ \Psi(x_\infty)\leq \liminf_{n\to +\infty}\Psi(x_{n})\leq \limsup_{n\to +\infty}\Psi(x_{n})\leq 0,$$
$$\Phi(x_\infty)\leq \liminf_{n\to +\infty}\Phi(x_{n})\leq  \limsup_{n\to +\infty}\Phi(x_{n})\leq -\frac{\eta}{2}. $$ 
The first inequality implies that $x_\infty\in C$ and the second one gives the contradiction.
\end{proof}

By using the duality theory, we are going to prove 
that the behavior of the map $\eps\mapsto \omega(\eps)$ can be interpreted with the conjugates of $\Psi$ and $\Phi$.
Let us first recall the following general theorem, see for example \cite[Theorem 4.1 p. 58]{EkeTem}.
\begin{thrm}\label{th.EkeTem}
Given two normed spaces $V$ and $Y$, let $F:V\to \rinf$ and $G:Y\to \rinf$ be closed convex functions, and let $L\in \cL(V,Y)$. Consider the primal problem
$$\inf_{u\in V}\{F(u)+G(L u)\},\leqno (\cP)$$
and the dual problem
$$\sup_{p^*\in Y^*}\{-F^*(L^*p^*)-G^*(-p^*)\}.\leqno (\cP^*)$$
Then we have $\sup \cP^*\leq \inf \cP$. If moreover $\inf \cP$ is finite and if
there exists $u_0\in \dom F$ such that $G$ is continuous at $L u_0$,
 then $\inf \cP=\sup \cP^*$, and $(\cP^*)$ has at least one solution.
\end{thrm}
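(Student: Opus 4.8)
The statement is the classical Fenchel--Rockafellar duality theorem, and I would prove it by combining an elementary weak-duality estimate with a perturbation (value-function) argument for the strong-duality part and the dual attainment. For weak duality, fix $u\in V$ and $p^*\in Y^*$: the Fenchel--Young inequality applied to $F$ at the point $L^*p^*$ gives $F(u)\geq\langle L^*p^*,u\rangle-F^*(L^*p^*)$, and applied to $G$ at $-p^*$ gives $G(Lu)\geq\langle -p^*,Lu\rangle-G^*(-p^*)$. Adding these and using $\langle L^*p^*,u\rangle=\langle p^*,Lu\rangle$, the bilinear terms cancel and one obtains $F(u)+G(Lu)\geq -F^*(L^*p^*)-G^*(-p^*)$; taking $\inf_u$ on the left and $\sup_{p^*}$ on the right yields $\sup\cP^*\leq\inf\cP$, so the value of $(\cP^*)$ never exceeds that of $(\cP)$.

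\textbf{The value function.} For the reverse inequality and the attainment, I would introduce the perturbation function $h:Y\to\R\cup\{-\infty,+\infty\}$ defined by $h(y)=\inf_{u\in V}\{F(u)+G(Lu+y)\}$, so that $h(0)=\inf\cP$. Being the marginal (infimal-projection) of the jointly convex function $(u,y)\mapsto F(u)+G(Lu+y)$, the function $h$ is convex. A change of variable $z=Lu+y$ in the definition of the conjugate, followed by separating the suprema over $u$ and over $z$, gives
$$h^*(q^*)=F^*(-L^*q^*)+G^*(q^*)\qquad\text{for all }q^*\in Y^*,$$
and hence, substituting $q^*=-p^*$, $\sup\cP^*=\sup_{q^*\in Y^*}\{\langle q^*,0\rangle-h^*(q^*)\}=h^{**}(0)$. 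Thus the whole theorem reduces to two assertions about $h$ near $0$: that $h^{**}(0)=h(0)$, and that $\partial h(0)\neq\emptyset$.

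\textbf{Exploiting the constraint qualification.} Let $u_0\in\dom F$ be a point at which $G$ is continuous at $Lu_0$. Then $G$ is finite and bounded above on some neighborhood $Lu_0+N$ of $Lu_0$, so for $y\in N$ one has $h(y)\leq F(u_0)+G(Lu_0+y)$, which is bounded above; together with the hypothesis that $h(0)=\inf\cP$ is finite, this shows $h$ is convex, finite at $0$ and bounded above on a neighborhood of $0$. The standard facts on convex functions now apply: such an $h$ is continuous at $0$ (local boundedness above implies local Lipschitz continuity), cannot take the value $-\infty$ anywhere (otherwise convexity along a segment through $0$ would force $h(0)=-\infty$), hence is proper; and a proper convex function continuous at $0$ is both lower semicontinuous at $0$---so that $h^{**}(0)=h(0)$, since $h^{**}$ is the largest lower semicontinuous convex minorant of $h$---and subdifferentiable at $0$, e.g.\ by separating the point $(0,h(0))$ from the nonempty interior of the epigraph of $h$ via Hahn--Banach. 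Picking $q^*\in\partial h(0)$, the Fenchel equality gives $h(0)+h^*(q^*)=\langle q^*,0\rangle=0$, i.e.\ $-h^*(q^*)=h(0)=\inf\cP$; setting $p^*:=-q^*$ and recalling $h^*(q^*)=F^*(L^*p^*)+G^*(-p^*)$, this says $p^*$ attains the dual value and $\sup\cP^*=\inf\cP$. Combined with weak duality, this finishes the argument.

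\textbf{Main obstacle.} The genuinely nontrivial content sits in the constraint-qualification step, which rests on two pieces of convex-analytic machinery: that a convex function bounded above on a neighborhood of a point is continuous (indeed locally Lipschitz) there, and that a convex function continuous at a point is subdifferentiable there. Both are established from the Hahn--Banach theorem applied to the epigraph, and they are precisely where the topological hypothesis (continuity of $G$ at $Lu_0$) is consumed; everything else is formal manipulation of Fenchel conjugates. One could instead bypass the value function and argue by a single Hahn--Banach separation of the epigraph of $y\mapsto G(Lu_0+y)$ from a suitable affine half-space, but the value-function route makes the appearance of the dual problem and of its solution most transparent.
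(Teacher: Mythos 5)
The paper does not prove this statement at all: it is quoted as a known result, with an explicit citation to Ekeland--Temam (Theorem 4.1, p.~58), so there is no in-paper proof to compare against. Your argument is correct and is essentially the classical proof from that reference: weak duality by Fenchel--Young, then the perturbation (value) function $h(y)=\inf_u\{F(u)+G(Lu+y)\}$ with $h^*(q^*)=F^*(-L^*q^*)+G^*(q^*)$, and the qualification condition used to make $h$ proper, convex, finite and bounded above near $0$, hence continuous and subdifferentiable at $0$, which yields zero duality gap and dual attainment. Note that the step $h^{**}(0)=h(0)$ is not even needed as a separate claim: once $q^*\in\partial h(0)$ is produced, the Fenchel equality $-h^*(q^*)=h(0)$ together with weak duality already gives both conclusions.
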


\begin{prpstn}\label{pr.majo} Assume Hypotheses $(\cH_\Psi)$-$(\cH_\Phi)$.\\
(a) For every $\eps\geq 0$, we have
\begin{equation}\label{eq.majo_psi*_phi*}
|\omega(\eps)|\leq \inf_{p\in H}\left\{\Psi^*(\eps\, p)+\eps\,\Phi^*(-p)\right\}.
\end{equation}
(b) Let $\eps\geq 0$, assume $\omega(\eps)>-\infty$ and the following qualification condition
\begin{equation}\tag{QC'} \mbox{there exists $x_0\in \dom \Psi$ such that $\Phi$ is continuous at $x_0$,}
\end{equation} 
then we have 
\begin{equation}\label{eq.lien_conjug}
|\omega(\eps)|=\min_{p\in H}\left\{\Psi^*(\eps\, p)+\eps\, \Phi^*(-p)\right\}.
\end{equation}
(c) Assume the qualification condition {\rm(QC)}\footnote{Notice that {\rm(QC)} is slightly stronger than (QC').}
\begin{equation}\tag{QC}\mbox{there exists $x_0\in C$ such that $\Phi$ is continuous at $x_0$.}
\end{equation}
Then there exists $p\in \ran(N_C)$ such that, for every $\eps \geq 0$, 
\begin{equation}\label{eq.majo_psi^*-sigmaC}
|\omega(\eps)|\leq \Psi^*(\eps\, p)-\sigma_C(\eps\, p).
\end{equation}
(d) Assume {\rm(QC)}. Then  Condition \eqref{eq.Attouch-Czarnecki} implies Condition \eqref{eq.cond_beta}.
\end{prpstn}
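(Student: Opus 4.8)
The plan is to run everything through Fenchel--Rockafellar duality (Theorem~\ref{th.EkeTem}) applied with $V=Y=H$, $L=I$, $F=\Psi$ and $G=\eps\Phi$, so that the primal problem is exactly $(\cP_\eps)$, $\inf\cP_\eps=\omega(\eps)$, and the dual reads $\sup_{p^*\in H}\{-\Psi^*(p^*)-(\eps\Phi)^*(-p^*)\}$. For $\eps>0$ one has $(\eps\Phi)^*(-p^*)=\eps\,\Phi^*(-p^*/\eps)$, so the change of variable $p^*=\eps p$ rewrites the dual as $\sup_{p\in H}\{-\Psi^*(\eps p)-\eps\,\Phi^*(-p)\}$. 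Weak duality $\sup\cP^*\le\inf\cP$ together with $\omega(\eps)\le 0$ (Proposition~\ref{pr.basic}(a)) then gives, for every $\eps>0$, $|\omega(\eps)|=-\omega(\eps)\le\inf_{p\in H}\{\Psi^*(\eps p)+\eps\,\Phi^*(-p)\}$, which is part~(a); the value $\eps=0$ is trivial since $\omega(0)=0$ and $\Psi^*(0)=-\inf_H\Psi=0$. For part~(b) I would invoke the strong duality clause of Theorem~\ref{th.EkeTem}: the primal value $\omega(\eps)$ is finite (it is $>-\infty$ by assumption and $\le 0$), and under (QC$'$) the function $G=\eps\Phi$ is, for $\eps>0$, continuous at a point $x_0\in\dom\Psi=\dom F$; hence $\inf\cP_\eps=\sup\cP^*$ and $(\cP^*)$ has a solution, i.e.\ the infimum over $p$ in part~(a) is attained, giving~\eqref{eq.lien_conjug} (again $\eps=0$ is immediate).

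The crux is part~(c), where I would manufacture the distinguished multiplier from the optimality conditions of the hierarchical problem. Since $S=\argmin_C\Phi=\argmin_H(\Phi+\delta_C)$ is nonempty by $(\cH_\Phi)$, fix $\bar x\in S$; then $0\in\partial(\Phi+\delta_C)(\bar x)$, and (QC) is precisely the Moreau--Rockafellar qualification ensuring $\partial(\Phi+\delta_C)(\bar x)=\partial\Phi(\bar x)+N_C(\bar x)$. Thus there is $p\in N_C(\bar x)\subset\ran(N_C)$ with $-p\in\partial\Phi(\bar x)$. The Fenchel extremality relation for $\partial\Phi$ yields $\Phi^*(-p)=\langle-p,\bar x\rangle-\Phi(\bar x)=-\langle p,\bar x\rangle$, where $\Phi(\bar x)=\inf_C\Phi=0$ by $(\cH_\Phi)$; and from $p\in N_C(\bar x)$ one gets $\sigma_C(p)=\langle p,\bar x\rangle$. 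Hence $\eps\,\Phi^*(-p)=-\eps\,\sigma_C(p)=-\sigma_C(\eps p)$ for all $\eps\ge 0$, by positive homogeneity of $\sigma_C$, and plugging this particular $p$ into the bound of part~(a) gives $|\omega(\eps)|\le\Psi^*(\eps p)-\sigma_C(\eps p)$ for every $\eps\ge 0$.

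Part~(d) is then immediate: with the vector $p\in\ran(N_C)$ supplied by part~(c), specialize the inequality at $\eps=1/\beta(t)$ to obtain $\beta(t)\,|\omega(1/\beta(t))|\le\beta(t)\bigl[\Psi^*(p/\beta(t))-\sigma_C(p/\beta(t))\bigr]$, and integrate over $[0,+\infty)$; the right-hand integral is finite by Condition~\eqref{eq.Attouch-Czarnecki} applied to this $p$, and since the integrand is nonnegative we conclude $\int_0^{+\infty}\beta(t)\,|\omega(1/\beta(t))|\,dt<+\infty$, i.e.\ Condition~\eqref{eq.cond_beta}.

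I expect the only genuinely delicate point to be the derivation of the multiplier in part~(c): one must check that (QC) licenses the subdifferential sum rule at $\bar x$, and keep track of the normalizations $\inf_H\Psi=\inf_C\Phi=0$ built into $(\cH_\Psi)$--$(\cH_\Phi)$, which are exactly what collapse the two Fenchel relations into $\Phi^*(-p)=-\sigma_C(p)$. Everything else is routine bookkeeping around Theorem~\ref{th.EkeTem} and the positive homogeneity of $\sigma_C$.
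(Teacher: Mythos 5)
Your proposal is correct and follows essentially the same route as the paper: parts (a)--(b) via Fenchel--Rockafellar duality (Theorem \ref{th.EkeTem}) with $F=\Psi$, $G=\eps\Phi$, $L=\mathrm{Id}$ and the substitution $p\mapsto\eps p$; part (c) via $0\in\partial(\Phi+\delta_C)(\bar x)$, the sum rule under (QC), and the Fenchel extremality relations (your direct computation $\Phi^*(-p)=-\sigma_C(p)$ is just a compressed form of the paper's bracket decomposition); and part (d) by specializing at $\eps=1/\beta(t)$ and integrating. No gaps worth noting.
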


\begin{proof} $(a)$ Let us apply
Theorem \ref{th.EkeTem} with $V=Y=H$, \, $F=\Psi$, \, $G=\eps\, \Phi$ \, and \, $L={\rm Id}_H$. The primal minimization problem $(\cP_\eps)$ reads as
$$\inf_{x\in H}\left\{\Psi(x)+\eps\, \Phi(x)\right\}.\leqno (\cP_\eps)$$
For every $\eps>0$, the dual problem~is
$$\sup_{p\in H}\left\{-\Psi^*(p)-\eps\, \Phi^*(-p/\eps)\right\}.\leqno (\cP_\eps^*)$$
From the general relation $\sup \cP_\eps\leq\inf\cP_\eps^*$, we deduce that
$$|\omega(\eps)|=-\omega(\eps)\leq\inf_{p\in H}\left\{\Psi^*(p)+\eps\, \Phi^*(-p/\eps)\right\}.$$
Replacing $p$ with $\eps\, p$, we immediately obtain inequality (\ref{eq.majo_psi*_phi*}). This inequality trivially holds true for $\eps=0$, hence it is valid
for every $\eps \geq 0$.

\medskip

$(b)$ Since condition (QC') is satisfied, Theorem \ref{th.EkeTem} shows that
$\inf \cP_\eps=\sup \cP_\eps^*$ and that $(\cP_\eps^*)$ has at least one solution. This implies that
$$|\omega(\eps)|=\min_{p\in H}\left\{\Psi^*( p)+\eps\, \Phi^*(-p/\eps)\right\}.$$
Equality (\ref{eq.lien_conjug}) follows immediately.

\medskip

$(c)$ Given $\xbar\in S=\argmin_C\Phi$, we have $0\in \partial(\Phi+\delta_C)(\xbar)$. The qualification  condition (QC) implies  
$\partial(\Phi+\delta_C)(\xbar)=\partial\Phi(\xbar)+N_C(\xbar)$.
We deduce that $0\in \partial\Phi(\xbar)+N_C(\xbar)$, whence
the existence of $p\in N_C(\xbar)\cap(-\partial\Phi(\xbar))$. For every $\eps\geq 0$, let us write that
\begin{eqnarray*}
\Psi^*( \eps p)+\eps\, \Phi^*(-p)&=&
\left[\Psi^*( \eps p)-\sigma_C(\eps p)\right]
+\eps\,\left[\sigma_C(p)+\delta_C(\xbar)-\langle p,\xbar\rangle\right]\nonumber\\
&&+\eps\,\left[\Phi^*(-p)+\Phi(\xbar)+\langle p,\xbar\rangle\right]. \nonumber
\end{eqnarray*}
Since $p\in N_C(\xbar)$ and $-p\in \partial\Phi(\xbar)$, the Fenchel extremality
relation shows that the second and third brackets are equal to zero. This 
implies that, for every $\eps\geq 0$
$$\Psi^*( \eps p)+\eps\, \Phi^*(-p)=\Psi^*( \eps p)-\sigma_C(\eps p).$$
Inequality (\ref{eq.majo_psi^*-sigmaC}) then immediately follows from (\ref{eq.majo_psi*_phi*}).

\medskip

$(d)$ It follows from $(c)$ and the statement of Conditions \eqref{eq.Attouch-Czarnecki} and \eqref{eq.cond_beta}.\end{proof}

\begin{rmrk}
The qualification condition (QC) may be slightly weakened in the statement of Proposition \ref{pr.majo}, items $(c)$-$(d)$. It suffices to assume that the operator $\partial \Phi + N_C$ is maximal monotone. The same remark applies to the statement of Corollary \ref{co.AttCza}, as was observed in \cite[Theorem 5.1]{AttCza}.
\end{rmrk}

\subsection{Examples}
We now review several examples for which we are able to majorize explicitly the function $\Psi^*-\sigma_C$.
This yields sufficient conditions for \eqref{eq.Attouch-Czarnecki}, and hence for \eqref{eq.cond_beta} in view of Proposition~\ref{pr.majo} $(d)$.

\begin{xmpl}\label{ex.conditioning} 
Let $\Psi:H\to\rinf$ be a closed convex function such that $C=\argmin \Psi\neq\emptyset$. Suppose that for every $x\in H$
$$\Psi(x)\geq \theta(d(x,C)),$$
where the closed convex function $\theta:\R\to \rinf$ is even\footnote{The assumptions on $\theta$ automatically imply that $0\in \argmin \theta$.} and such that
$\theta(0)=0$.
% The notation $d(.,C)$ stands for the distance function to the set $C$.
Then we have for every $\eps\geq 0$ and $p\in H$
\begin{equation}\label{eq.majo_Ieps}
\Psi^*(\eps\,p)-\sigma_C(\eps\, p)\leq \theta^*(\eps\, \|p\|).
\end{equation}
\end{xmpl}
\begin{proof}
From a classical result, the conjugate of the function $\theta(d(.,C))$ is the function $\theta^*(\|\,.\,\|)+\sigma_C$, see for example \cite[Exercise IV.17]{Aze}.
It ensues that $\Psi^*\leq \theta^*(\|\,.\,\|)+\sigma_C$, and the conclusion follows immediately.
\end{proof}

Under the assumptions of Example \ref{ex.conditioning}, the key condition (\ref{eq.Attouch-Czarnecki}) of Corollary \ref{co.AttCza} is satisfied if for every $p\in H$,
$$\int_0^{+\infty}\beta(t)\, \theta^*(\|p\|/\beta(t))\,dt<+\infty.$$
\begin{rmrk} Assume that there exists $a>0$ such that $\Psi(x)\geq a\,d(x,C)$ for every $x\in H$. By applying the above proposition with $\theta(t)=a\, |t|$, we find $\Psi^*(\eps\,p)-\sigma_C(\eps\, p)\leq \delta_{[-a,a]}(\eps\,|p|)$, and hence $\Psi^*(\eps\,p)-\sigma_C(\eps\, p)=0$ for $\eps$ small enough. In this case,  condition (\ref{eq.Attouch-Czarnecki}) is automatically satisfied.
\end{rmrk}
\begin{rmrk} \label{re.conditioning} 
Assume that there exist $a>0$ and $r>1$ such that 
\begin{equation}\label{eq.mino_dist}
\Psi(x)\geq a\,d^r(x,C),
\end{equation}
 for every $x\in H$. Let us apply the above proposition with 
the function $\theta:\R\to \R$ defined by $\theta(t)=a\, |t|^r$. Since $(|\,.\,|^r/r)^*=(|\,.\,|^{r^*}/r^*)$, where
 $r^*$ is the conjugate exponent of $r$, \ie $r^*=1/(1-1/r)$, we easily obtain
$$\theta^*(t)= \frac{(ar)^{1-r^*}}{r^*}\, |t|^{r^*}.$$
In view of (\ref{eq.majo_Ieps}), we infer that $\Psi^*(\eps\,p)-\sigma_C(\eps\, p)\leq \frac{(ar)^{1-r^*}}{r^*}\, (\eps\,\|p\|)^{r^*}$.
In this case, condition (\ref{eq.Attouch-Czarnecki}) is satisfied as soon as 
$$\int_0^{+\infty}(1/\beta(t))^{r^*-1}\, dt<+\infty.$$

\end{rmrk}

\begin{xmpl}\label{ex.quadratic}
Let  $L\in \cL(H)$ and let $\Psi:H\to\rinf$ be a closed convex function such that $C=\argmin \Psi=\ker L$. Suppose that $\Psi(x)\geq \demi\|Lx\|^2$ for all $x\in H$.
Then we have for every $\eps \geq 0$ and $p\in \ran(L^*)$,
$$ \Psi^*(\eps\, p)-\sigma_C(\eps\, p)\leq \frac{\eps^2}{2}\, d^2\left(0,(L^*)^{-1}(p)\right).$$
\end{xmpl}
\begin{proof}
By applying Theorem \ref{th.EkeTem}, we can show\footnote{The details are left to the reader.} that the conjugate of the function $x \mapsto \demi\|Lx\|^2$ is given by
$$p\mapsto\left\{
\begin{array}{ll}
\demi d^2\left(0,(L^*)^{-1}(p)\right)& \mbox{ if } \,p\in \ran(L^*)\\
+\infty& \mbox{ if } \,p\notin \ran(L^*).
\end{array}
\right.$$
It ensues that for every $p\in \ran(L^*)$,
\begin{equation}\label{eq.majo_psi*}
\Psi^*(p)\leq \demi d^2\left(0,(L^*)^{-1}(p)\right).
\end{equation}
On the other hand, since the set $\ker L$ is a subspace of $H$, we have
\begin{equation}\label{eq.sigma_ker}
\sigma_{\ker L}=(\delta_{\ker L})^*=\delta_{(\ker L)^\perp}.
\end{equation}
Recalling that $\ran(L^*)\subset (\ker L)^\perp$, we deduce from (\ref{eq.majo_psi*}) and (\ref{eq.sigma_ker}) that for every $\eps\geq 0$ and $p\in \ran(L^*)$,
$$ \Psi^*(\eps\, p)-\sigma_{\ker L}(\eps\, p)\leq \frac{\eps^2}{2} d^2\left(0,(L^*)^{-1}(p)\right).$$
\end{proof}
Under the assumptions of Example \ref{ex.quadratic}, the key condition (\ref{eq.Attouch-Czarnecki}) of Corollary \ref{co.AttCza} is satisfied if
$$\int_0^{+\infty}1/\beta(t)\,dt<+\infty.$$

Recall that for $\rho\geq 0$, the $\rho$-Hausdorff distance between two 
nonempty sets $K$ and $K'$ is defined by
$$\haus_{\rho}(K,K')=\sup_{\|x\|\leq \rho}|d(x,K)-d(x,K')|,$$
see \cite{Beer} for an extended study of this notion.

\begin{xmpl}
Let  $K\subset H$ be a closed convex set such that $0\in K$.
 Define the function $\Psi:H\to \rinf$ by $\Psi=\delta_\BB+\sigma_K$. The set of minima of $\Psi$ is given by $C=\BB\cap N_K(0)$, and we have for every $\eps \geq 0$ and  
$p\in H$,
$$
\Psi^*(\eps\,p)-\sigma_C(\eps\,p)\leq \haus_{\eps\|p\|}(K,T_K(0)).
$$
\end{xmpl}
\begin{proof}
First observe that the assumption $0\in K$ implies that $\sigma_K(x)\geq 0$ for every $x\in H$. We infer that $\Psi(x)\geq 0$ for every $x\in H$, and that
\begin{eqnarray*}
\Psi(x)=0&\Longleftrightarrow& x\in  \BB\, \mbox{ and } \,\sigma_K(x)=0\\
&\Longleftrightarrow& x\in  \BB \, \mbox{ and } \, \langle x,y\rangle\leq 0\, 
\mbox{ for every } y\in K\\
&\Longleftrightarrow& x\in  \BB \, \mbox{ and } \, x\in N_K(0).
\end{eqnarray*}
It ensues that $C=\argmin \Psi=\BB\cap N_K(0)$. Since the Moreau-Rockafellar qualification condition is satisfied, we have
\begin{equation}\label{eq.expression_sigma_C}
\sigma_C=\sigma_{\BB\cap N_K(0)}=\sigma_\BB \,\tdown\,\sigma_{N_K(0)}=\|\,.\,\|\,\tdown\,\delta_{T_K(0)}=d(.,T_K(0)).
\end{equation}
In the same way, we obtain
\begin{equation}\label{eq.expression_Psi*}
\Psi^*=\sigma_\BB \,\tdown\,\delta_K=\|\,.\,\|\,\tdown\,\delta_K=d(.,K).
\end{equation}
In view of (\ref{eq.expression_sigma_C})-(\ref{eq.expression_Psi*}), we deduce that for every $\eps\geq 0$ and $p\in H$
$$\Psi^*(\eps\,p)-\sigma_C(\eps\,p)= d(\eps p, K)-d(\eps p, T_K(0))\leq \haus_{\eps\|p\|}(K,T_K(0)).$$
\end{proof}

\section{Examples of coupled gradient systems with multiscale aspects}\label{se.examples}
\subsection{A two-dimensional example}
Take $H=\R^2$ and fix $a>0$. Consider the function $\Psi:\R^2\to \rinf$ defined by
$$\Psi(x,y)=\left\{\begin{array}{cl}
\frac{y^2}{2(a^2-x^2)}& \mbox{if}\quad (x,y)\in ]-a,a[\times \R\\
0&\mbox{if}\quad (x,y)\in\{(-a,0),(a,0)\}\\
+\infty& \mbox{elsewhere.} 
\end{array}
\right.$$
It is easy to check that $\Psi(x,y)=\frac{1}{2a} (\sigma_D(a+x,y)+\sigma_D(a-x,y))$,
where $\sigma_D$ is the support function of the set $D$ defined by
$$D=\{(x,y)\in \R^2, \quad 2x+y^2\leq 0\},$$
see for example \cite[Example 2.38]{RocWet}. The function 
$\Psi$ is closed, convex and satisfies $C=\argmin \Psi=[-a,a]\times~\{0\}$. Let us now fix $b\in ]0,a[$, and define the function
$\Phi:\R^2\to \R$ by 
$$\Phi(x,y)=y+\demi [x-b]_+^2+\demi[x+b]_-^2,$$ for every $(x,y)\in \R^2$. The function $\Phi$ is convex and differentiable on $\R^2$. It can easily be seen that $\min_C\Phi=0$, and that $S=\argmin_C\Phi=[-b,b]\times\{0\}$.
Given a nondecreasing map $\beta:\R_+\to\R_+$ such that $\lim_{t\to+\infty}\beta(t)=+\infty$, we are interested in the asymptotic behavior as $t\to +\infty$ of the following dynamical system
\begin{equation}\label{eq.equa_ex1}
\dot X(t)+ \partial \Phi(X(t))+\beta(t) \partial\Psi(X(t))\ni 0, \qquad \mbox{ with } X(t)=(x(t),y(t)). 
\end{equation}
From Corollary \ref{co.viscosity}$(i)$, we obtain that $\lim_{t\to+\infty}d(X(t), \argmin_C\Phi)=0$.  We let the reader check that for every $\eps> 0$, $(0,-a^2\eps)$ is the unique minimum point of the
function $\Psi+\eps \, \Phi$ over $\R^2$. The corresponding minimal value equals $\omega(\eps)=(\Psi+\eps \, \Phi)(0,-a^2\eps)=-a^2\eps^2/2$. 
Condition (\ref{eq.cond_beta}) of Corollary~\ref{co.viscosity} amounts to 
$$\int_0^{+\infty}1/\beta(t)\, dt<~+\infty.$$
 Under this condition, Corollary~\ref{co.viscosity}$(ii)$
shows that $\lim_{t\to +\infty}(x(t),y(t))=(x_\infty, 0)$, for some $x_\infty\in [-b,b]$.
For every $(x,y)\in ]-a,a[\times \R$, we have
\begin{eqnarray}\label{eq.H_v1}
(\Psi+\eps \, \Phi)(x,y)&-&(\Psi+\eps \, \Phi)(0,-a^2\eps) \nonumber\\
&=&\frac{y^2}{2(a^2-x^2)}
+\eps\,y+\frac{\eps}{2} [x-b]_+^2+\frac{\eps}{2}[x+b]_-^2+\demi a^2\eps^2\nonumber\\
&\geq &\frac{y^2}{2(a^2-x^2)}
+\eps\,y+\demi a^2\eps^2.
\end{eqnarray}
Observe that
\begin{equation}\label{eq.H_v2}
\frac{y^2}{2(a^2-x^2)}+\eps\,y+\demi a^2\eps^2\geq \frac{y^2}{2a^2}+\eps\,y+\demi a^2\eps^2=\frac{1}{2a^2} (y+a^2\eps)^2.
\end{equation}
On the other hand, we have
\begin{equation}\label{eq.H_v3}
\frac{y^2}{2(a^2-x^2)}+\eps\,y+\demi a^2\eps^2=\demi \eps^2x^2+\frac{(y+\eps (a^2- x^2))^2}{2(a^2-x^2)}\geq \demi \eps^2x^2.
\end{equation}
By combining (\ref{eq.H_v1}), (\ref{eq.H_v2}) and (\ref{eq.H_v3}), we find
for   every $(x,y)\in ]-a,a[\times \R$,
$$(\Psi+\eps \, \Phi)(x,y)-(\Psi+\eps \, \Phi)(0,-a^2\eps)\geq
\frac{1}{4}\eps^2x^2+\frac{1}{4a^2}(y+a^2\eps)^2.$$
This inequality trivially holds true if $(x,y)\notin \dom \Psi$ or if 
$(x,y)\in\{(-a,0),(a,0)\}$. We infer that for every $(x,y)\in \R^2$ and every
$\eps\leq 1/a$,
\begin{eqnarray*}
(\Psi+\eps \, \Phi)(x,y)-(\Psi+\eps \, \Phi)(0,-a^2\eps)
&\geq&\frac{\eps^2}{4}\,\left(x^2+(y+a^2\eps)^2\right)\\
&= & \frac{\eps^2}{4}\,\left\|(x,y)-(0,-a^2\eps)\right\|^2.
\end{eqnarray*}
Dividing by $\eps$ and replacing $\eps$ with $1/\beta(t)$, we obtain that for every $X=(x,y)\in \R^2$ and every $t$ large enough,
$$(\beta(t)\Psi+ \Phi)(X)-(\beta(t)\Psi+ \Phi)(\xi(t))\geq \frac{1}{4\,\beta(t)}\,\left\|X-\xi(t)\right\|^2,
$$
with $\xi(t)=(0,-a^2/\beta(t))$. This shows that Assumption $(i)$ of  Theorem \ref{th.selection_short} is satisfied. The optimal path $t\mapsto \xi(t)$ converges toward $(0,0)$ as $t \to +\infty$.
The finite length assumption of Theorem \ref{th.selection_short} is fulfilled because the map $t\mapsto 1/\beta(t)$ tends nonincreasingly toward~$0$. Assumption $(ii)$ of  Theorem \ref{th.selection_short} amounts to $\int_0^{+\infty}1/\beta(t)\, dt=+\infty$.
Under this last condition, Theorem \ref{th.selection_short} shows that $\lim_{t\to +\infty}(x(t),y(t))=(0, 0)$.
 To summarize, we have proved that
 
$\bullet$ if $1/\beta\in L^1(0,+\infty)$, then $\lim_{t\to +\infty}(x(t),y(t))=(x_\infty, 0)$, for some $x_\infty\in [-b,b]$;

$\bullet$ if $1/\beta\notin L^1(0,+\infty)$, then $\lim_{t\to +\infty}(x(t),y(t))=(0, 0)$.

\subsection{An example in PDE theory}
Let $\Omega\subset \R^N$ be a bounded domain with $\cC^1$ boundary. Let us consider the space $H=L^2(\Omega)$ endowed with the scalar product $\langle u, v\rangle_H=\int_\Omega uv$ and the corresponding norm. Let $h\in L^2(\Omega)$ be a given function satisfying $\int_\Omega h=~0$, and let $a$, $b\in \R$ be such that
$a\leq b$. Take

\medskip

\noindent $\bullet$ $\Psi:L^2(\Omega)\to\rinf$ defined by $\Psi(u)=\demi \int_\Omega \|\nabla u\|^2-\int_\Omega h u$  if $u\in H^1(\Omega)$ and $\Psi(u)=+\infty$ otherwise.\\
$\bullet$ $\Phi:L^2(\Omega)\to\R$ defined by $\Phi(u)=\demi \int_\Omega\left\{
[u(x)-b]_+^2+[a-u(x)]_+^2\right\}dx$ for every $u\in L^2(\Omega)$. \\
The function $\Psi$ is closed and convex. It is immediate to check that the variational formulation of $\xi\in \partial
\Psi(u)$ is given by
\begin{equation}\label{eq.variational}
\forall v\in H^1(\Omega), \quad \int_\Omega \xi\,v=\int_\Omega \nabla u.\nabla v-\int_\Omega h\,v.
\end{equation}
The function $\Phi$ is convex, differentiable and satisfies $\nabla \Phi(u)=[u-b]_+-[a-u]_+$ for every $u\in L^2(\Omega)$. 
Given a map $\beta:\R_+\to\R_+$ such that $\lim_{t\to+\infty}\beta(t)=+\infty$, we are interested in the asymptotic behavior as $t\to +\infty$ of the following dynamical system
$$
\dot u(t)+ \partial \Phi(u(t))+\beta(t) \partial\Psi(u(t))\ni 0. 
$$
If $u(.)$ is a solution of the above differential inclusion, then for almost every $t\geq 0$, there exists $\xi(t)\in \partial \Psi(u(t))$ such that
$$\dot u(t)+ [u(t)-b]_+-[a-u(t)]_+ +\beta(t)\xi(t)=0.$$
Taking the scalar product with $v\in H^1(\Omega)$, we obtain
in view of (\ref{eq.variational})
$$\int_\Omega \dot u(t)\, v+\int_\Omega \left( [u(t)-b]_+-[a-u(t)]_+ \right) v\,+\beta(t)\left[\int_\Omega \nabla u(t).\nabla v-\int_\Omega h\,v\right]=0.$$
By using Green's formula, we find for every $v\in H^1(\Omega)$,
\begin{eqnarray*}
\int_\Omega \dot u(t)\, v &+&\int_\Omega \left( [u(t)-b]_+-[a-u(t)]_+ \right) v \nonumber\\
&+&\beta(t)\left[-\int_\Omega \Delta u(t)\, v+\int_{\partial \Omega}\frac{\partial u(t)}{\partial n}\,v-\int_\Omega h\,v\right]=0.
\end{eqnarray*}
This yields
$$
\left\{
\begin{array}{rcl}
\dot u(t)+[u(t)-b]_+-[a-u(t)]_+ +\beta(t)\left[-\Delta u(t)-h\right]&=0& \mbox{ on } \Omega,\\
\frac{\partial u(t)}{\partial n}&=0& \mbox{ on } \partial \Omega.
\end{array}
\right.
$$
The elements of $C=\argmin \Psi$ are solutions of the minimization problem
$$\inf\left\{\demi\int_\Omega\|\nabla u\|^2-\int_\Omega h\,u: \quad u\in H^1(\Omega)\right\}.$$
The corresponding weak variational formulation is given by
\begin{equation}\label{eq.weak_variational}
\forall v\in H^1(\Omega), \quad \int_\Omega\nabla u.\nabla v=\int_\Omega h\, v.
\end{equation}
Since $\int_\Omega h=0$, it is well-known that such solutions exist, and they satisfy the following Neumann boundary value problem
$$\left\{\begin{array}{rcl}
-\Delta u-h&=0& \mbox{ on } \Omega,\\
\frac{\partial u}{\partial n}&=0& \mbox{ on } \partial \Omega.
\end{array}
\right.$$
Denoting by $\hu$ a particular solution, the set $C=\argmin \Psi$ is the straight line $C=\{\hu+m, \, m\in \R\}$. 
Let us now check that the function $\Psi$ satisfies the inf-compactness property (\ref{eq.inf-compact}). Given $R>0$ and $l\in \R$, let $u\in L^2(\Omega)$ be in the lower level set
$$\Lambda_{R,l}=\{u\in L^2(\Omega),\quad \|u\|_{L^2}\leq R, \,\Psi(u)\leq l\}.$$
From the definition of $\Psi$, we have $u\in H^1(\Omega)$ and
\begin{eqnarray*}
\int_\Omega\|\nabla u\|^2&\leq &2l+2\,\int_\Omega h\,u\\
&\leq&2l+2\,\|h\|_{L^2}\|u\|_{L^2}\leq 2\,l+2\, R\,\|h\|_{L^2}.
\end{eqnarray*}
We immediately deduce that
$$\|u\|^2_{H^1}=\int_\Omega u^2+\int_\Omega\|\nabla u\|^2\leq R^2+2\,l+2\, R\,\|h\|_{L^2},$$
which shows that the set $\Lambda_{R,l}$ is bounded for the $H^1(\Omega)$-norm.
Since $\Omega$ is bounded with $\cC^1$ boundary,  by the Rellich-Kondrachov theorem, the injection $H^1(\Omega)\hookrightarrow L^2(\Omega)$ is compact. We conclude that $\Lambda_{R,l}$ is relatively compact for the $L^2(\Omega)$-norm, hence the function $\Psi$ satisfies the inf-compactness property (\ref{eq.inf-compact}). \\
Let us now determine the set $S=\argmin_C\Phi$. Since the function $\Phi$ is continuous, convex and coercive, the set $\argmin_C\Phi$ is a nonempty segment included in~$C$.
Recall that $u\in \argmin_C\Phi$ if and only if it satisfies
the optimality condition $-\nabla \Phi(u)\in N_C(u)$. Since the set $C$ is a straight line directed by the space of constant functions, it is clear that
$$N_C(u)=\left\{p\in L^2(\Omega), \quad \langle p,1\rangle_{L^2(\Omega)}=0\right\}=
\left\{p\in L^2(\Omega), \quad \int_\Omega p=0\right\}.$$
Finally, we obtain the equivalences
\begin{eqnarray}
u\in \argmin_C\Phi&\Longleftrightarrow&\int_\Omega \nabla \Phi(u)(x)\, dx=0\nonumber\\
&\Longleftrightarrow&\int_\Omega \Big([u(x)-b]_+-[a-u(x)]_+\Big)\, dx=0.\label{eq.cond_opt}
\end{eqnarray}
Assuming that $\hu\in \argmin_C\Phi$, let us denote by $\inf_\Omega \hu$ (resp. $\sup_\Omega \hu$) the essential
infimum (resp. supremum) of $\hu$ over the set $\Omega$.
We distinguish the cases \, $\sup_\Omega \hu-\inf_\Omega \hu> b-a$, and \,$\sup_\Omega \hu-\inf_\Omega \hu\leq b-a$.

\medskip

\noindent {\it Case 1:} \, $\sup_\Omega \hu-\inf_\Omega \hu> b-a$. In view of condition (\ref{eq.cond_opt}), we deduce that the sets
 $$\Omega_+=\{x\in \Omega, \quad \hu(x)>b\}\quad \mbox{ and } \quad
\Omega_-=\{x\in \Omega, \quad \hu(x)<a\}$$
have positive measures.
For $m\in \R$, let us define the quantity $\theta(m)$ by
$$\theta(m)=\int_\Omega \Big([\hu(x)+m-b]_+-[a-m-\hu(x)]_+\Big)\, dx.$$
Recalling that $\theta(0)=0$, we have for every $m\geq 0$
\begin{eqnarray*}
\theta(m)&=&\int_\Omega \Big([\hu(x)+m-b]_+-[\hu(x)-b]_+\Big)\, dx \\
&+&\int_\Omega \Big([a-\hu(x)]_+-[a-m-\hu(x)]_+\Big)\, dx\\
&\geq& \int_\Omega \Big([\hu(x)+m-b]_+-[\hu(x)-b]_+\Big)\, dx\\
&\geq&\int_{\Omega_+} \Big([\hu(x)+m-b]_+-[\hu(x)-b]_+\Big)\, dx\\
&=&\int_{\Omega_+} m\, dx=m\, |\Omega_+|.
\end{eqnarray*}
In the same way, we obtain $\theta(m)\leq m\, |\Omega_-|$ for every $m\leq 0$.
Since $|\Omega_+|$ and $|\Omega_-|$ are positive, this implies that $\theta(m)=0$ if and only if $m=0$. In view of (\ref{eq.cond_opt}), we conclude that $\hu$ is the unique minimum of $\Phi$ over the set $C=\{\hu+m, \, m\in \R\}$. We then infer from Corollary \ref{co.viscosity}$(i)$ that $\lim_{t\to +\infty} u(t)=\hu $ strongly
in $L^2(\Omega)$. 

\medskip

\noindent {\it Case 2:} \, $\sup_\Omega \hu-\inf_\Omega \hu\leq b-a$. In view of condition (\ref{eq.cond_opt}), we deduce that $\hu(x)\in [a,b]$ for almost every $x\in \Omega$.
We then have $\Phi(\hu)=0$, hence $\hu\in \argmin \Phi$. It ensues that
\begin{eqnarray*}
S&=& \argmin \Psi\cap \argmin \Phi\nonumber\\
&=&\left\{\hu+m, \quad m\in\left[a-\inf_\Omega \hu, b-\sup_\Omega \hu\right]\right\}.
\end{eqnarray*}
 By combining Corollary \ref{co.inter_nonvide} and Remark \ref{rk.strong_cv}, we deduce
that there exists $\bar{u}\in S$ such that $\lim_{t\to +\infty} u(t)=\bar{u}$ strongly in $L^2(\Omega)$. 

\medskip

In fact the convergence is strong in $H^1(\Omega)$ in each of the above cases. Indeed, observe that
\begin{eqnarray*}
\|u(t)-\bar{u}\|^2_{H^1}&=&\int_\Omega \|\nabla u(t)-\nabla \bar{u}\|^2
+\int_\Omega |u(t)-\bar{u}|^2\\
&=&\int_\Omega \|\nabla u(t)\|^2-2\, \int_\Omega \nabla u(t)\nabla \bar{u}+\int_\Omega\|\nabla \bar{u}\|^2 +\int_\Omega |u(t)-\bar{u}|^2.
\end{eqnarray*}
By using the weak variational formulation (\ref{eq.weak_variational}), we obtain that $\int_\Omega \nabla u(t)\nabla \bar{u}=\int_\Omega h\,u(t)$, and that
$\int_\Omega \|\nabla \bar{u}\|^2=\int_\Omega h\,\bar{u}$.
We immediately deduce from the above equality that
$$\|u(t)-\bar{u}\|^2_{H^1}=2\, \big(\Psi(u(t))-\Psi(\bar{u})\big)+\int_\Omega |u(t)-\bar{u}|^2.$$
Since $\lim_{t\to+\infty} \Psi(u(t))=\min_H \Psi$ (see Remark \ref{rk.strong_cv}) and since
$\lim_{t\to+\infty} \|u(t)-\bar{u}\|_{L^2}=~0$, we conclude that $\lim_{t\to+\infty} \|u(t)-\bar{u}\|_{H^1}=0$.

%\section*{Acknowledgments} The authors express their gratitude to the anonymous referees for their careful reading of the paper. Their valuable suggestions and critical comments made numerous improvements throughout.

\end{document}